\begin{document}
\setlength{\parskip}{0.3\baselineskip}

\newtheorem{theorem}{Theorem}
\newtheorem{corollary}[theorem]{Corollary}
\newtheorem{lemma}[theorem]{Lemma}
\newtheorem{proposition}[theorem]{Proposition}
\newtheorem{definition}[theorem]{Definition}
\newtheorem{remark}[theorem]{Remark}
\renewcommand{\thefootnote}{\alph{footnote}}
\newenvironment{proof}{\smallskip \noindent{\bf Proof}: }{\hfill $\Box$\hspace{1in} \medskip \\ }


\newcommand{\beqaa}{\begin{eqnarray}}
\newcommand{\eeqaa}{\end{eqnarray}}
\newcommand{\beqae}{\begin{eqnarray*}}
\newcommand{\eeqae}{\end{eqnarray*}}


\newcommand{\sii}{\Leftrightarrow}
\newcommand{\imer}{\hookrightarrow}
\newcommand{\imerc}{\stackrel{c}{\hookrightarrow}}
\newcommand{\Con}{\longrightarrow}
\newcommand{\con}{\rightarrow}
\newcommand{\conf}{\rightharpoonup}
\newcommand{\confe}{\stackrel{*}{\rightharpoonup}}
\newcommand{\pbrack}[1]{\left( {#1} \right)}
\newcommand{\sbrack}[1]{\left[ {#1} \right]}
\newcommand{\key}[1]{\left\{ {#1} \right\}}
\newcommand{\dual}[2]{\langle{#1},{#2}\rangle}
\newcommand{\intO}[1]{\int_{\Omega}{#1}\, dx}

\newcommand{\R}{{\mathbb R}}
\newcommand{\N}{{\mathbb N}}
\newcommand{\Z}{{\mathbb Z}}

\newcommand{\cred}[1]{\textcolor{red}{#1}}

\title{\bf Regularidade do Sistema de Timoshenko com Termoelasticidade do Tipo III e Amortecimento Fracionário
\\
Regularity  of the Timoshenko's System with Thermoelasticity of Type III  and  Fractional Damping}
\author{Filomena Barbosa Rodrigues Mendes, \\
Department of Electrical Engineering,  Federal University of Technology of Paraná, Brazil, \\
Lesly Daiana B. Sobrado\\
Institute of Mathematics,  Federal University of Rio of Janeiro,  Brazil  and 
\\
Fredy Maglorio  S.  Suárez*\\
Department of Mathematics,  Federal University of Technology of  Paraná,  Brazil
}
\date{}
\maketitle
\let\thefootnote\relax\footnote{AMS Subject Classifications: 35B35, 35Q55, 35B40, 35A05.\\
{\it Email address:}  {\rm fredy@utfpr.edu.br:} Fredy M.  S.  Suárez $^*$Corresponding author. Published in Editora Atena, as a chapter of the book ``Ciências exatas e da terra: teorias e princípios 2",  on August 2, 2023: DOI: 10.22533/at.ed.3742302081.}
\begin{abstract}	
The article presents  the study of the regularity of two thermoelastic beam systems defined by the Timoshenko beam model coupled with the heat conduction of Green-Naghdiy theory of  type III,  both mathematical models are differentiated by their coupling terms that arise as a consequence of the constitutive laws initially considered.   The systems presented in this work  have 3 fractional dampings: $\mu_1(-\Delta)^\tau \phi_t$,  $\mu_2(-\Delta)^\sigma \psi_t$ and   $K(-\Delta)^\xi \theta_t$, where $\phi,\psi$ and $\theta$ are transverse displacement, rotation angle and empirical temperature of the bean  respectively and the parameters  $(\tau,\sigma,\xi)\in [0,1]^3$.  It is noted that for values 0 and 1 of the parameter $\tau$,  the so-called frictional or viscous damping will be faced, respectively.   The main  contribution of this article is to show that the corresponding semigroup $S_i(t)=e^{\mathcal{B}_it}$, with $i=1,2$,  is of Gevrey class 
$s>\frac{r+1}{2r}$  for   $r=\min \{\tau,\sigma,\xi\}$ for  all
 $(\tau,\sigma,\xi )\in R_{CG}:= (0, 1)^3$.  It is also showed  that $S_1(t)=e^{\mathcal{B}_1t}$ is analytic in the region $R_{A_1}:=\{(\tau,\sigma, \xi )\in [\frac{1}{2},1]^3\}$ and $S_2(t)=e^{\mathcal{B}_2t}$ is analytic in the region  $R_{A_2}:=\{(\tau,\sigma, \xi )\in [\frac{1}{2},1]^3/ \tau=\xi\}$.
\end{abstract}
\bigskip
{\sc Keywords and phrases:} Gevrey class,  Analyticity,   Fractional damping,  Semigroup theory.

\setcounter{equation}{0}

\section{Introduction}

In this article,  it is investigated the regularity of the semigroup associated with the thermoelastic beam system where the transversal vibrations are given by  Timo-shenko's model (See Timoshenko \cite{Timoshenko1921}) and the balance of the energy is described by the Green-Naghdi theory,  known as thermo-elasticity of type III(See Green and Naghdi \cite{GN1991}).  The equation of motion and energy balance for this thermoelastic Timoshenko system is given by
\begin{eqnarray*}
\rho  \mathcal{A} \phi_{tt}-S_x=0=0,\qquad \rho I\psi_{tt}-M_x+S=0,\qquad
\rho_3u_t+q_x+\gamma\psi_{xt} =  0, \quad  x\in (0,L), \;t\in \mathbb{R}^+.
\end{eqnarray*}
The constant $\rho$ denotes the density,   $  \mathcal{A}$ the cross-sectional area, and $I$ the area moment of inertia.  By $S$ one denotes the shear force,  $M$ is the bending moment and $q$ is the heat flux.  The function $\phi$ is the transverse displacement,  $\psi$  is the rotation angle of a filament of the beam and $u$ is the temperature difference.  Here,  $t$ is the time variable and $x$ is the coordinate space along the beam. 

In this investigation,  two systems that differ only in their coupling terms are analyzed,  which are obtained by starting from similar constitutive laws.

The first best-known mathematical model in the literature, and studied for example in \cite{LFJMRR2014}, \cite{Messaoudi2008} and \cite{Messaoudi2013}, starts from the following constitutive laws:
\begin{equation}\label{LeiC}
S= k   \mathcal{A}G(\phi_x+\psi),\qquad \qquad M=EI\psi_x+\beta u\qquad {\rm and}\qquad q=-\delta\alpha_x-K\alpha_{xt},
\end{equation}
 where $\alpha$ is the so-called thermal displacement whose time derivative is the empirical temperature $u$,  i.e.,  $\alpha_t=u, \; E$ and $G$ are elastic constants, $k$  the shear coefficient for measuring the stiffness of materials ($k < 1$), $\delta$ and $K$ denote the thermal conductivity,  $\beta$ the coefficient of linear thermal expansion and $\gamma$ a coupling constant.
 
To simplify the notation let us denote by $\rho_1 =\rho   \mathcal{A}$,  $ \rho_2= \rho I$,  $\kappa = k  \mathcal{A}G$ and $b = EI$.  
Under these conditions,  the system can be written as
\begin{eqnarray}
\label{Eq01}
\rho_1 \phi_{tt}-\kappa(\phi_x+\psi)_x&= & 0, \quad  x\in (0,L), \;t\in \mathbb{R}^+ ,\\
\label{Eq02}
\rho_2\psi_{tt}-b\psi_{xx}+\kappa(\phi_x+\psi)+\beta u_x&=& 0, \quad  x\in (0,L), \;t\in \mathbb{R}^+,\\
\label{Eq03}
\rho_3u_{tt}-\delta u_{xx}+\gamma\psi_{ttx}-Ku_{txx} &= & 0, \quad  x\in (0,L), \;t\in \mathbb{R}^+.
\end{eqnarray}
In order to exhibit the dissipative natural of system \eqref{Eq01}--\eqref{Eq03},  it is convenient to introduce a new variable (see \cite{ZhangZuazua2003}):
\begin{equation}
\label{Eq1.5Jaime}
\theta(t,x)=\int_0^tu(s,x)ds+\dfrac{1}{\delta}\chi(x),
\end{equation}
where $\chi\in H_0^1(0,L)$ solves the following Cauchy problem
\begin{equation}\label{Eq1.6Jaime}
\left\{\begin{array}{ccc}
\chi_{xx}=\rho_3 u_1-Ku_{0xx}+\gamma\psi_{1x} & {\rm in} & (0,L),\\
\chi(x)=0,  &x&=0, L.
\end{array}\right.
\end{equation}
Then,  considering 3 fractional damping   $\mu_1(-\Delta)^\tau \phi_t,\; \mu_2(-\Delta)^\sigma \psi_t$ and $K(-\Delta)^\xi \theta_t$  and using \eqref{Eq1.5Jaime} and \eqref{Eq1.6Jaime} the starting system \eqref{Eq01}--\eqref{Eq03} is transformed to
\begin{eqnarray}
\label{Eq01A}
\rho_1 \phi_{tt}-\kappa(\phi_x+\psi)_x+\mu_1(-\Delta)^\tau \phi_t&= & 0, \quad  x\in (0,L), \;t\in \mathbb{R}^+ ,\\
\label{Eq02A}
\rho_2\psi_{tt}-b\Delta\psi+\kappa(\phi_x+\psi)+\beta\theta_{tx}+\mu_2(-\Delta)^\sigma \psi_t&=& 0, \quad  x\in (0,L), \;t\in \mathbb{R}^+, \\
\label{Eq03A}
\rho_3\theta_{tt}-\delta\Delta\theta +\gamma\psi_{tx}+K(-\Delta)^\xi\theta_t &= & 0, \quad  x\in (0,L), \;t\in \mathbb{R}^+,
\end{eqnarray}
where the parameters $\tau$,  $\sigma$ and $\xi$ take values in the range $[0,1]$ and $\mu_1, \mu_2$ are positive real constants.

The second mathematical model addressed in this research that was proposed in 2017 by Santos et al. \cite{MSantosDilberto2017},   which consists of adjusting the couplings of the first system couplings to satisfy  type III heat conduction that acts mainly on the shear force, and as a result,  the transverse shear force $S =\kappa(\phi_x +\psi)$ is obtained from the transverse distribution of shear stresses acting on each cross-section of the beam.  For this purpose,  it is necessary to start with  the used constitutive laws 
\begin{equation}\label{LeiCNew}
S=\kappa(\phi_x+\psi)-\mu u,\qquad\quad M=b\psi_x\qquad\quad{\rm and}\qquad \quad q=-\delta \alpha_x-K\alpha_{xt},
\end{equation}
where $\alpha$ is the so-called thermal displacement, the time derivative of which is the empirical temperature $u$,  i.e.,  $\alpha_t = u$.  From these constitutive laws,  by making appropriate changes,   one  arrives at the system to follow (for more details see \cite{MSantosDilberto2017}):
\begin{eqnarray}
\label{Eq01B}
\rho_1 \phi_{tt}-\kappa(\phi_x+\psi)_x+\mu\theta_{tx}+\mu_1(-\Delta)^\tau \phi_t&= & 0, \quad  x\in (0,L), \;t\in \mathbb{R}^+ ,\\
\label{Eq02B}
\rho_2\psi_{tt}-b\Delta\psi+\kappa(\phi_x+\psi)-\mu\theta_t+\mu_2(-\Delta)^\sigma \psi_t&=& 0, \quad  x\in (0,L), \;t\in \mathbb{R}^+, \\
\label{Eq03B}
\rho_3\theta_{tt}-\delta \Delta\theta +\mu(\phi_x+\psi)_t+\gamma (-\Delta)^\xi\theta_t &= & 0, \quad  x\in (0,L), \;t\in \mathbb{R}^+.
\end{eqnarray}
To make the calculations in the results of the test more practical,   the two systems  are  abstractly rewritten by  using the operator: $A\colon D(A)\subset L^2(0,L)\con L^2(0,L)$, where
\beqaa\label{Omenoslaplaciano}
A=-\Delta=-(\,\cdot\,)_{xx},\quad D(A)=H^2(0,L)\cap H^1_0(0,L).
\eeqaa
It is known that the operator given in \eqref{Omenoslaplaciano} is self-adjoint, positive, and has inverse compact on a complex Hilbert space $D(A^0)=L^2(0,L)$. Therefore,  the operator $A^{\nu}$ is self-adjoint positive for all $\nu\in\mathbb{R}$,  bounded by $\nu\leq 0$,  and  the embedding
\begin{eqnarray*}
D(A^{\nu_1})\hookrightarrow D(A^{\nu_2}),
\end{eqnarray*}
is continuous for $\nu_1>\nu_2$. Here, the norm in $D(A^{\nu})$ is given by $\|u\|_{D(A^{\nu})}:=\|A^{\nu}u\|$, $u\in D(A^{\nu})$, where $\|\cdot\|$ denotes the norm in the Hilbert space $L^2(0,L)=D(A^0)$.  Some of these spaces are: $D(A^{1/2})=H_0^1(0,L)$, $D(A^0)=L^2(0,L)$ and $D(A^{-1/2})=H^{-1}(0,L)$. 

Studying the regularity (Gevrey class and/or Analyticity) of systems is relevant to science. In applied sciences, obtaining information about the solutions of the mathematical model under study such as regularity (smoothness) has the same importance as knowing the asymptotic behavior of the model's solution.  From a mathematical point of view,  when one talks about the regularity of solutions, concepts like differentiable,  Gevrey class,  and analyticity come to mind.   It is already known that semigroups $S(t)=e^{\mathcal{B}t}$ of the Gevrey class have better regular properties than a differentiable semigroup, but are less regular than an analytic semigroup.  The Gevrey rate $s>1$ 'measures' the degree of divergence of its power series. It should be noted that the Gevrey class or analyticity of the model in particular implies three very important properties: The first one is the property of the smoothing effect on initial data,  i. e.,  no matter how irregular the initial data is,  the solutions of the models become very smooth in positive finite-time. The second property is that systems are exponentially stable.  Finally, these systems benefit from the property of linear stability, which means that the type of the semigroup is equal to the spectral limit of its infinitesimal operator.

During the last decades, various authors have studied some physical phenomena for the Timoshenko beam system formulated in different mathematical models. Most of them focused on studying the asymptotic behavior,  always trying to obtain the best decay rate and using dissipations in some of the system equations.  In the following paragraphs, some of these investigations are mentioned.

In 1987, Kim and Renardy \cite{KR1987} studied the asymptotic behavior of the Timoshenko beam considering two boundary dissipations. They demonstrated the exponential decay of energy associated with the model using the multiplier technique and also determined numerical estimates of the eigenvalues of the operator associated with this system.  Later in 2008, Messaoudin and Said-Houari \cite{Messaoudi2008} also studied the asymptotic behavior of the Type III thermoelastic Timoshenko system with mixed boundary conditions (Dirichlet-Dirichlet-Neuman), they demonstrated using the energy method that if the velocities of the waves associated with the hyperbolic part of the system are equal, then the system decays exponentially. The complement of the study of the asymptotic behavior of this same system due to different mixed conditions (Neumann-Dirichlet-Neumann) was studied again in 2013 by Messaoudin and Fareh \cite{Messaoudi2013}, in this new research they show that if the speeds of the waves associated with the hyperbolic part of the system are different,  then the system decays polynomially.   In the following year, the work of Fatori et al. \cite{LFJMRR2014} who also studied this same system, but with two types of boundary conditions (Dirichlet-Dirichlet-Dirichlet) and (Dirichlet-Neumann-Dirichlet), using semigroup technique they showed that the corresponding semigroup is exponentially stable If and only if the velocities associated with the hyperbolic part of the system are equal, in the absence of exponential decay they show that the corresponding semigroup is polynomially stable and the determined rate is optimal.

More recently,  in 2017,  Santos et al.\cite{MSantosDilberto2017} studied the type III thermoelastic Timoshenko beam system given by:
\begin{eqnarray*}
\rho_1\phi_{tt}-\kappa(\phi_x+\psi)_x+\mu\theta_{tx} & = & 0, \qquad x\in (0,L),\; t\in\mathbb{R}^+,\\
\rho_2\psi_{tt}-b\psi_{xx}+\kappa(\phi_x+\psi)-\mu\theta_t &=& 0,\qquad x\in (0,L),\; t\in \mathbb{R}^+,\\
\rho_3\theta_{tt}-\delta\theta_{xx}+\mu(\phi_x+\psi)_t-\gamma\theta_{txx} &=& 0,\qquad x\in (0,L),\; t\in \mathbf{R}^+,
\end{eqnarray*}
and the boundary conditions (Dirichlet-Dirichlet-Dirichlet) given by
\begin{equation*}
\phi(0,t)=\phi(L,0)=\psi(0,t)=\psi(L,0)=\theta(0,t)=\theta(L,t)=0, \qquad\forall t>0,
\end{equation*}
or with the boundary conditions(Dirichlet-Neumann-Neumann)  given by
\begin{equation*}
\phi(0,t)=\phi(L,0)=\psi_x(0,t)=\psi_x(L,0)=\theta_x(0,t)=\theta_x(L,t)=0, \qquad\forall t>0,
\end{equation*}
note that this system differs slightly from the terms of coupling.  In this research, the authors also applied the semigroup technique and showed that the associated semigroup can be exponentially or polynomially stable depending on the relationships between the wave propagation velocity coefficients, specifically in the case of polynomial decay it was proved that the rate found is optimal.

About investigations the regularity of semigroups associated with various mathematical models, one could mention the paper/article of Fatori et al.\cite{LMJAIME2012},   whose work the authors studied the differentiability and analyticity,  in addition to studying the asymptotic behavior via semigroups.  For analyticity,  they used a theorem that can be found in the Liu-Zheng book\cite{LiuZ} or the work of Hao et al.\cite{Hao-2015}.  Other more recently published works have explored the regularity of solutions using the Gevrey class introduced in Taylor's thesis \cite{TaylorM}(1989). In the same direction, one could mention \cite{KAFSTebou2021, GRiveraO2022, GAJMRLiu2021}.

Among recent research that has studied the asymptotic behavior and/or regularity of models with fractional damping, one could mention the works of Sare et al.\cite{HSLiuRacke2019}, in that paper, the authors investigated coupled systems thermoelastic type,  where they address two cases with Fourier's heat law and the other with Cattaneo considering in both cases the rotational inertial term.  Furthermore,  they,   study the exponential stability,  possible regions of loss of exponential stability and polynomial stability,  and,  more recently, the work of Keyantuo et al.\cite{Tebou-2020}(2020) to be published.  In this latter  work,  the authors studied the thermoelastic plate model with a fractional Laplacian between the Euler-Bernoulli and Kirchhoff model with two types of boundary conditions.  In addition to studying the asymptotic and analytical behavior,  the authors show that the underlying semigroups are of Gevrey class $s$ for every $s>\frac{2-\theta}{2-4\theta}$ for both the clamped and hinged boundary conditions when the parameter $\theta$ lies in the interval $(0,1/2)$.  Moreover, one could cite the investigations \cite{RCRaposo2022,KZLZHugo2021,ZKLiuTebou2022,HPFredy2019,BrunaJMR2022, Tebou-2021}. 

One motivation for deciding to study the regularity (determination of the Gevrey classes and analyticity) of Timoshenko's systems was in the direction of complementing the work published in 2005 by Raposo et al. \cite{RFMN2005}, in this work the authors study the asymptotic behavior (exponential decay of the Timoshenko system), the studied system considers two weak (frictional) dampings given by the speed of the transversal and rotational movements. The investigated model is given by
\begin{eqnarray*}
\rho_1u_{tt}-k(u_x-\psi)_x+u_t=0\quad {\rm in}\quad (0,L)\times (0,\infty),\\
\rho_2\psi_{tt}-b\psi_{xx}+k(u_x-\psi)+\psi_t=0,\quad{\rm in}\quad (0,L)\times (0,\infty),\\
u(0,t)=u(L,t)=\psi(0,t)=\psi(L,t)=0\qquad t>0.
\end{eqnarray*}
The authors use the frequency domain technique (spectral characterization) to study the asymptotic behavior via semigroups.

This article is organized as follows. In section 2, using the theory of semigroup,   the well-posedness of both systems is studied.  Section 3 is dedicated to the study of the regularity of both models,  this subsection is subdivided into two parts: in subsection 3.1  Gevrey class $s_1$ of the semigroup associated with the first system $S_1(t)=e^{\mathcal{ B}_1t }$ in the region $R_{GC}=(0,1)^3$ is determined and it is shown that  $S_1(t)=e^{\mathcal{B}_1t}$ is analytic in the region $R_{A1}$.  Finally,  in subsection 3.2 we show that the Gevrey class $s_2$ of $S_2(t)=e^{\mathcal{B}_2t}$. The classes $s_i>\frac{r+1}{2r}$ where $r=\min\{\tau,\sigma,\xi\}$ for all  the parameters $(\tau,\sigma, \xi ) \in R_{GC}=(0,1)^3$ and $i=1,2$.  And this subsection is finished by proving that $S_2(t)=e^{\mathcal{B}_2t}$ is analytic when the 3 parameters $\tau$, $\sigma$,  and $\xi$ take values in the closed interval $[\frac{1}{2},1]$ such that $\tau=\xi$.  The
investigation ends with an observation regarding the asymptotic behavior of $S(t)=e^{\mathcal{B}_it}$ in which it is illustrated that the necessary estimates given in Lemmas \ref{LemaExponencial}, \ref{EixoImaginary01} and \ref{LemaExponencial2} and remark \eqref{OBS} imply that the semigroup associated with the system \eqref{Eq05A}--\eqref{Eq07A} and \eqref{Eq08A}--\eqref{Eq09A} and the system \eqref{Eq01B2}-\eqref{Eq09A}  respectivamently are exponentially stable for $(\tau, \sigma,\xi)\in [0, 1]^3$.
\section{Well-posedness: semigroup approach}
Using the $A$ operator defined in \eqref{Omenoslaplaciano} the system \eqref{Eq01A}--\eqref{Eq03A} will be given by
\begin{eqnarray}
\label{Eq05A}
\rho_1 \phi_{tt}-\kappa(\phi_x+\psi)_x+\mu_1A^\tau \phi_t&= & 0, \quad  x\in (0,L), \;t\in \mathbb{R}^+ ,\\
\label{Eq06A}
\rho_2\psi_{tt}+bA\psi+\kappa(\phi_x+\psi)+\beta\theta_{tx}+\mu_2A^\sigma \psi_t&=& 0, \quad  x\in (0,L), \;t\in \mathbb{R}^+, \\
\label{Eq07A}
\rho_3\theta_{tt}+\delta A\theta +\gamma\psi_{tx}+KA^\xi\theta_t &= & 0, \quad  x\in (0,L), \;t\in \mathbb{R}^+.
\end{eqnarray}
And again using the $A$ operator on \eqref{Eq01B}--\eqref{Eq03B},   leads to
\begin{eqnarray}
\label{Eq01B2}
\rho_1 \phi_{tt}-\kappa(\phi_x+\psi)_x+\mu\theta_{tx}+\mu_1A^\tau \phi_t&= & 0, \quad  x\in (0,L), \;t\in \mathbb{R}^+ ,\\
\label{Eq02B2}
\rho_2\psi_{tt}+bA\psi+\kappa(\phi_x+\psi)-\mu\theta_t+\mu_2A^\sigma \psi_t&=& 0, \quad  x\in (0,L), \;t\in \mathbb{R}^+, \\
\label{Eq03B2}
\rho_3\theta_{tt}+\delta A\theta +\mu(\phi_x+\psi)_t+\gamma A^\xi\theta_t &= & 0, \quad  x\in (0,L), \;t\in \mathbb{R}^+.
\end{eqnarray}
Additionally, the appropriate initial conditions for both systems are considered
\begin{equation}\label{Eq08A}
\hspace*{-0.2cm}\varphi(0,\cdot)=\phi_0, \quad \varphi_t(0,\cdot)=\phi_1,\quad \psi(0,\cdot)=\psi_0,\quad
 \psi_t(0,\cdot)=\psi_1,  \quad \theta(0,\cdot)=\theta_0\quad {\rm and}\quad \theta_t(0,\cdot)=\theta_1,
\end{equation}
and boundary conditions for both systems of type Dirichlet--Dirichlet--Dirichlet
\begin{equation}\label{Eq09A}
\hspace*{-0.5cm}\phi(t,0)=\phi(t,L)=0, \quad \psi(t,0)=\psi(t,L)=0, \quad\theta(t,0)=\theta(L,0)=0\quad  t>0.
\end{equation}
Taking $\Phi=\phi_t$, $\Psi=\psi_t$, $\Theta=\theta_t$,  considering $U=(\phi,\Phi,\psi, \Psi, \theta, \Theta)$ and $U_0=(\phi_0,\phi_1,\psi_0, \psi_1,\theta_0,\theta_1)$,  both  systems,  can be written in the following abstract framework
\begin{equation}\label{Fabstrata}
    \frac{d}{dt}U(t)=\mathcal{B}_i U(t),\quad    U(0)=U_0,
\end{equation}
where for the operator $\mathcal{B}_1$ for the first system is defined by
  \begin{gather} \label{operadorAgamma}
 \mathcal{B}_1U:=\left[   
 \begin{array}{c}
 \Phi\\
 \frac{1}{\rho_1}[\kappa(\phi_x+\psi)_x- \mu_1A^\tau \Phi]\\
 \Psi\\
- \frac{1}{\rho_2}[bA\psi+\kappa(\phi_x+\psi)+\beta\Theta_x +\mu_2 A^\sigma \Psi]\\
 \Theta\\
- \frac{1}{\rho_3}[\delta A\theta+\gamma\Psi_x+K A^\xi\Theta]
  \end{array}
  \right ],
  \end{gather}
and the operator $\mathcal{B}_2$ for the second system is defined by
 \begin{gather} \label{operadorBgamma}
 \mathcal{B}_2U:=\left[   
 \begin{array}{c}
 \Phi\\
 \frac{1}{\rho_1}[\kappa(\phi_x+\psi)_x-\mu\Theta_x- \mu_1A^\tau \Phi]\\
 \Psi\\
 \frac{1}{\rho_2}[-bA\psi-\kappa(\phi_x+\psi)+\mu\Theta -\mu_2 A^\sigma \Psi]\\
 \Theta\\
 \frac{1}{\rho_3}[-\delta A\theta-\mu(\Phi_x+\Psi)-\gamma A^\xi\Theta]
  \end{array}
  \right ],
  \end{gather}
  
for $U=(\phi,\Phi,\psi, \Psi, \theta, \Theta)$.  Both operators are defined in the energy space
$$              \begin{array}{ll}
               \mathcal{H}_1=\mathcal{H}_2=\mathcal{H}:= [D(A^\frac{1}{2})\times  D(A^0)]^3.
              \end{array}
$$
$\mathcal{H}_1$ is a Hilbert space with the inner product given by
\begin{eqnarray*}
\langle U, U^* \rangle_ {\mathcal{H}_1}  &:= &\rho_1 \beta\gamma\dual{\Phi}{\Phi^*}+\rho_2\kappa\gamma \dual{\Psi}{\Psi^*}+\beta \kappa\gamma\dual{\phi_x+\psi} {\phi_x^*+\psi^*}+b\kappa\gamma\dual{\psi_x} {\psi_x^*}\\
& &  +\beta\delta\kappa\dual{\theta_x}{\theta^*_x}+\beta\kappa K\dual{\Theta}{\Theta^*},
\end{eqnarray*}
for $U=(\phi,\Phi,\psi, \Psi, \theta, \Theta), U^*=(\phi^*,\Phi^*,\psi^* , \Psi^*, \theta^*, \Theta^*)\in \mathcal{H}$, 
and induced norm
\begin{eqnarray*}
\|U\|^2_ {\mathcal{H}_1}&:= &\rho_1\beta\gamma \|\Phi\|^2+\rho_2\kappa \gamma\|\Psi\|^2+ \beta\kappa\gamma\|\phi_x+\psi\|^2+b\kappa\gamma\|A^\frac{1}{2}\psi\|^2 +\beta\delta\kappa\|A^\frac{1}{2}\theta\|^2+\beta\kappa K\|\Theta\|^2.
\end{eqnarray*}
 In these conditions,  he domain of $\mathcal{B}_1$ is defined as
\begin{eqnarray}
\label{dominioB}
 \hspace*{-0.6cm}   \mathcal{D}(\mathcal{B}_1)   
  \hspace*{-0.3cm}   &:=& \hspace*{-0.3cm}  D(A)\times D(A^\frac{1}{2})\cap D(A^\tau)\times D(A) \times D(A^\frac{1}{2})\cap D(A^\sigma)\times D(A)\times D(A^\frac{1}{2})\cap D(A^\xi).
    \end{eqnarray}
    And $\mathcal{H}_2$  is a Hilbert space with the inner product given by
    \begin{eqnarray*}
\langle U, U^* \rangle_ {\mathcal{H}_2}  &:= &\rho_1\dual{\Phi}{\Phi^*}+\rho_2 \dual{\Psi}{\Psi^*}+ \kappa\dual{\phi_x+\psi} {\phi_x^*+\psi^*}+b\dual{\psi_x} {\psi_x^*}  +\delta\dual{\theta_x}{\theta^*_x}+\rho_3\dual{\Theta}{\Theta^*},
\end{eqnarray*}
for $U=(\phi,\Phi,\psi, \Psi, \theta, \Theta), U^*=(\phi^*,\Phi^*,\psi^* , \Psi^*, \theta^*, \Theta^*)\in \mathcal{H}$, 
and induced norm
\begin{eqnarray*}
\|U\|^2_ {\mathcal{H}_2}&:= &\rho_1 \|\Phi\|^2+\rho_2\|\Psi\|^2+ \kappa\|\phi_x+\psi\|^2+b\|A^\frac{1}{2}\psi\|^2 +\delta\|A^\frac{1}{2}\theta\|^2+\rho_3\|\Theta\|^2.
\end{eqnarray*}
 In these conditions,  the domain of $\mathcal{B}_2$ is defined as
\begin{eqnarray} \label{dominioB1}
\hspace*{-0.6cm}    \mathcal{D}(\mathcal{B}_2)
   &:=& D(A)\times D(A^\frac{1}{2})\cap D(A^\tau)\times D(A) \times D(A^\frac{1}{2})\cap D(A^\sigma)\times D(A)\times D(A^\frac{1}{2})\cap D(A^\xi).
    \end{eqnarray}
To show that both operators $\mathcal{B}_i$,   $i=1,2$  are generators of  a  $C_0-$\! semigroup we invoke a result from Liu-Zheng' book\cite{LiuZ}.
\begin{theorem}[see Theorem 1.2.4 in \cite{LiuZ}] \label{TLiuZ}
Let $ \mathcal{B}_i$ be a linear operator with domain $\mathcal{D}(\mathcal{B}_i)$ dense in a Hilbert space $\mathcal{H}$. If $ \mathcal{B}_i$ is dissipative and $0\in\rho( \mathcal{B}_i)$, the resolvent set of $ \mathcal{B}_i$, then $ \mathcal{B}_i$ is the generator of a $C_0$- semigroup of contractions on $\mathcal{H}$.

\end{theorem}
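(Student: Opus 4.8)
Since Theorem~\ref{TLiuZ} is the classical corollary of the Lumer--Phillips theorem, the plan is simply to reduce to that result. Recall that Lumer--Phillips asserts: a densely defined dissipative operator $\mathcal{B}_i$ on the Hilbert space $\mathcal{H}$ generates a $C_0$-semigroup of contractions as soon as $\mathrm{Range}(\lambda_0 I-\mathcal{B}_i)=\mathcal{H}$ for \emph{some} $\lambda_0>0$. The hypotheses already grant that $\mathcal{D}(\mathcal{B}_i)$ is dense and that $\mathcal{B}_i$ is dissipative, which in a Hilbert space means $\mathrm{Re}\,\langle \mathcal{B}_i U,U\rangle_{\mathcal{H}}\le 0$ for every $U\in\mathcal{D}(\mathcal{B}_i)$; thus the only missing ingredient is the range condition, and this is exactly what the hypothesis $0\in\rho(\mathcal{B}_i)$ supplies.

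First I would record that $0\in\rho(\mathcal{B}_i)$ forces $\mathcal{B}_i$ to be closed and $\mathcal{B}_i^{-1}\in\mathcal{L}(\mathcal{H})$, so that $\mathcal{B}_i\colon\mathcal{D}(\mathcal{B}_i)\to\mathcal{H}$ is a bijection. Then, for any real $\lambda$ with $0<\lambda<\|\mathcal{B}_i^{-1}\|^{-1}$, I would use the factorization
\begin{equation*}
\lambda I-\mathcal{B}_i=-(I-\lambda\mathcal{B}_i^{-1})\,\mathcal{B}_i ,
\end{equation*}
valid on $\mathcal{D}(\mathcal{B}_i)$. Since $\|\lambda\mathcal{B}_i^{-1}\|<1$, the operator $I-\lambda\mathcal{B}_i^{-1}$ is boundedly invertible on $\mathcal{H}$ by the Neumann series, hence $\lambda I-\mathcal{B}_i$ is a bijection from $\mathcal{D}(\mathcal{B}_i)$ onto $\mathcal{H}$; in particular $\mathrm{Range}(\lambda I-\mathcal{B}_i)=\mathcal{H}$ for every such $\lambda$. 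Feeding this into Lumer--Phillips yields that $\mathcal{B}_i$ is the infinitesimal generator of a $C_0$-semigroup of contractions on $\mathcal{H}$, which is the assertion.

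There is no genuine obstacle in this argument; the only points requiring care are (a) using the Hilbert-space form of dissipativity $\mathrm{Re}\,\langle\mathcal{B}_iU,U\rangle_{\mathcal{H}}\le0$, which moreover already yields the a priori bound $\|(\lambda I-\mathcal{B}_i)U\|\ge\lambda\|U\|$ for $\lambda>0$, and (b) recalling that Lumer--Phillips only demands surjectivity of $\lambda_0I-\mathcal{B}_i$ for a single $\lambda_0>0$ rather than for all of them. If one preferred not to quote Lumer--Phillips, an equivalent route is to combine that a priori bound with the Neumann-series computation above to obtain the resolvent estimate $\|(\lambda I-\mathcal{B}_i)^{-1}\|_{\mathcal{L}(\mathcal{H})}\le 1/\lambda$ on $(0,\|\mathcal{B}_i^{-1}\|^{-1})$, extend it to all $\lambda>0$ by the usual connectedness argument on $\rho(\mathcal{B}_i)$, and then invoke the Hille--Yosida theorem directly. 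In the concrete application to the operators \eqref{operadorAgamma} and \eqref{operadorBgamma}, the real work lies not here but in the two hypotheses that must be checked separately: dissipativity, through an energy identity in which the fractional damping terms $\mu_1A^{\tau}\Phi$, $\mu_2A^{\sigma}\Psi$, $KA^{\xi}\Theta$ (respectively $\gamma A^{\xi}\Theta$) produce, up to positive constants, the non-positive quantities $-\|A^{\tau/2}\Phi\|^2$, $-\|A^{\sigma/2}\Psi\|^2$, $-\|A^{\xi/2}\Theta\|^2$; and $0\in\rho(\mathcal{B}_i)$, through unique solvability of the associated stationary elliptic system in the energy space $\mathcal{H}$.
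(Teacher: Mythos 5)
Your proposal is correct, and it is essentially the canonical proof of this result: the paper itself does not prove Theorem~\ref{TLiuZ} at all, it simply imports it from Liu--Zheng (Theorem 1.2.4 of \cite{LiuZ}), and the argument you give --- dissipativity plus $0\in\rho(\mathcal{B}_i)$, the factorization $\lambda I-\mathcal{B}_i=-(I-\lambda\mathcal{B}_i^{-1})\mathcal{B}_i$ with a Neumann series to get $\mathrm{Range}(\lambda I-\mathcal{B}_i)=\mathcal{H}$ for small $\lambda>0$, and then Lumer--Phillips --- is exactly how the cited reference establishes it. You also correctly identify that what the paper actually verifies for the concrete operators \eqref{operadorAgamma} and \eqref{operadorBgamma} are the two hypotheses (the dissipativity identity \eqref{eqdissipative} and the solvability of the stationary system giving $0\in\rho(\mathcal{B}_1)$), not the abstract theorem itself.
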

Next, we apply the Theorem \ref{TLiuZ} only for the first system, the proof of the second system is completely similar. Let us see that the operator $ \mathcal {B} _1$  defined in \eqref{operadorAgamma} satisfies the conditions of this theorem. Clearly,   $\mathcal{D}(\mathcal{B}_1)$ is dense in $\mathcal{H}$.  Effecting the internal product of $\mathcal{B}_1U$ with $U,$ lead to
\begin{equation}\label{eqdissipative}
\text{Re}\dual{\mathcal{B}_1U}{U}=  -\beta\gamma\mu_1\|A^\frac{\tau}{2}\Phi\|^2-\kappa\gamma\mu_2\|A^\frac{\sigma}{2} \Psi\|^2-\dfrac{\beta\kappa K^2}{\rho_3}\|A^\frac{\xi}{2}\Theta\|^2, \quad\forall\ U\in \mathcal{D}(\mathcal{B}_1),
\end{equation}
that is, the operator $\mathcal{B}_1$ is dissipative.

 To complete the conditions of the above theorem, it remains to show that $0\in\rho(\mathcal{B}_1)$. Let $F=(f^1,f^2,f^3,f^4,f^5, f^6)\in \mathcal{H}$, let us see that the stationary problem $ \mathcal{B}_1U=F$ has a solution $U=(\phi,\Phi, \psi, \Psi,\theta,\Theta)$.  From the definition of the operator  $\mathcal{B}_1$ given in
\eqref{operadorAgamma},  this system can be written as                     
\begin{align}
	\Phi=f^1,\qquad& \quad\quad  \kappa (\phi_x+\psi)_x=\rho_1f^2+\mu_1 A^\tau f^1, \label{exist-10A}\\
	\Psi=f^3,\qquad &  \quad\quad  bA\psi+\kappa (\phi_x+\psi) =-\rho_2f^4-\mu_2A^\sigma f^3-\beta f^5_x, \label{exist-20A}\\
\Theta=f^5, \quad \quad &  \quad \delta A\theta=-\gamma f^3_x-KA^\xi f^5-\rho_3f^6.
\label{exist-30A}	
\end{align}
Therefore, it is not difficult to see that there exists only one solution $\phi, \psi$ and $\theta$
of the system
\begin{multline}\label{Eliptico001}
 \kappa (\phi_x+\psi)_x=\rho_1f^2+\mu_1A^\tau f^1\qquad \in \qquad D(A^0),\\
  bA\psi+\kappa (\phi_x+\psi) =-\rho_2f^4-\mu_2A^\sigma f^3-\beta f^5_x\qquad \in \quad D(A^0),  \\
   \delta A\theta=-\gamma f^3_x-KA^\xi f^5-\rho_3 f^6 \qquad \in \quad D(A^0).
\end{multline}
Therefore: $\|U\|_\mathcal{H}\leq C\|F\|_\mathcal{H}$, 
which in particular implies that $\|\mathcal{B}_1^{-1}F\|_\mathcal{H}\leq C \|F\|_\mathcal{H}$, so we have  that   $0$ belongs to the resolvent set $\rho(\mathcal{B}_1)$.  Consequently, from Theorem \ref{TLiuZ}  we have  $\mathcal{B}_1$ as the generator of a contractions semigroup.

As a consequence of the above Theorem\eqref{TLiuZ},  it follows
\begin{theorem}
Given $U_0\in\mathcal{H}$ there exists a unique weak solution $U$ to  the problem \eqref{Fabstrata} satisfying 
$$U\in C([0, +\infty), \mathcal{H}).$$
Futhermore, if $U_0\in  D(\mathcal{B}_i^k), \; k\in\mathbb{N}$, then the solution $U$ of \eqref{Fabstrata} satisfies
$$U\in \bigcap_{j=0}^kC^{k-j}([0,+\infty),  D(\mathcal{B}_i^j).$$
\end{theorem}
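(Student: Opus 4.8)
\medskip
\noindent\textbf{Proof plan.} The statement is the standard regularity corollary of the generation theorem, so the plan is simply to unwind what has already been established. By Theorem~\ref{TLiuZ}, the dissipativity identity \eqref{eqdissipative} and the fact that $0\in\rho(\mathcal B_i)$, each $\mathcal B_i$ generates a $C_0$-semigroup of contractions $S_i(t)=e^{\mathcal B_i t}$ on $\mathcal H$ (the argument for $\mathcal B_2$ being verbatim that for $\mathcal B_1$, using the corresponding dissipativity computation and the solvability of the analogue of \eqref{Eliptico001}). I would then define the candidate solution by the orbit map $U(t):=S_i(t)U_0$. Strong continuity of $t\mapsto S_i(t)U_0$ gives $U\in C([0,+\infty),\mathcal H)$ at once, while the contraction property yields the a priori bound $\|U(t)\|_{\mathcal H}\le\|U_0\|_{\mathcal H}$; this $U$ is the weak (mild) solution of \eqref{Fabstrata}. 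Uniqueness is part of the general semigroup theory; alternatively, if $V$ is another solution with the same data, then $W=U-V$ solves the homogeneous problem with $W(0)=0$, and \eqref{eqdissipative} gives $\tfrac{d}{dt}\|W(t)\|_{\mathcal H}^2\le 0$, whence $W\equiv 0$.

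For the higher regularity I would argue by induction on $k$, using the classical fact that when $U_0\in D(\mathcal B_i)$ the orbit $t\mapsto S_i(t)U_0$ is continuously differentiable with
\[
\frac{d}{dt}S_i(t)U_0=\mathcal B_i S_i(t)U_0=S_i(t)\mathcal B_i U_0,
\]
together with the $S_i(t)$-invariance of $D(\mathcal B_i)$ and the closedness of $\mathcal B_i$. Iterating, for $U_0\in D(\mathcal B_i^k)$ one obtains, for every $m\le k$,
\[
U^{(m)}(t)=\mathcal B_i^{m} S_i(t)U_0=S_i(t)\mathcal B_i^{m} U_0 ,
\]
so in particular $U\in C^{k}([0,+\infty),\mathcal H)$. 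Now fix $j\in\{0,\dots,k\}$ and let $m\le k-j$; then $\mathcal B_i^{m+j}U_0\in\mathcal H$, so $t\mapsto S_i(t)\mathcal B_i^{m+j}U_0$ is continuous in $\mathcal H$, which by closedness of $\mathcal B_i^{j}$ means $U^{(m)}(t)\in D(\mathcal B_i^{j})$ and $t\mapsto\mathcal B_i^{j}U^{(m)}(t)$ is continuous. Hence each $U^{(m)}$ with $m\le k-j$ is continuous with values in $D(\mathcal B_i^{j})$, and its derivative $U^{(m+1)}$ (for $m<k-j$) is again continuous in $D(\mathcal B_i^{j})$ by the same argument; therefore $U\in C^{k-j}([0,+\infty),D(\mathcal B_i^{j}))$. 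Intersecting over $j$ gives the claimed regularity.

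The ``hard part'' here is essentially vacuous: all the genuine content---density of the domain, dissipativity via \eqref{eqdissipative}, and $0\in\rho(\mathcal B_i)$---has already been verified, and the theorem follows formally from Pazy-type $C_0$-semigroup theory. The only points that deserve an explicit line are the invariance $S_i(t)D(\mathcal B_i^{k})\subset D(\mathcal B_i^{k})$ and the commutation $\mathcal B_i^{m}S_i(t)=S_i(t)\mathcal B_i^{m}$ on $D(\mathcal B_i^{m})$, both of which are standard consequences of $\mathcal B_i$ being a closed generator.
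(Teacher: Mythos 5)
Your proposal is correct and follows essentially the same route as the paper, which states this theorem as an immediate corollary of the generation result (Theorem \ref{TLiuZ} together with the dissipativity \eqref{eqdissipative} and $0\in\rho(\mathcal{B}_i)$) and leaves the standard semigroup details implicit. You merely spell out those standard details (orbit map, uniqueness via contractivity, and the iterated differentiability/closedness argument for $U_0\in D(\mathcal{B}_i^k)$), which is exactly the intended argument.
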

\section{Regularization Results}
This section is divided into two subsections,   in each of them,  the Gevrey class and analytics of the corresponding systems are studied.  Using the characterization results presented in \cite{Tebou-2020} (adapted from \cite{TaylorM}, Theorem 4, p. 153])  it is shown that the corresponding semigroups $S_i(t) = e^{\mathcal{B}_it}$, $i=1,2$ are of Gevrey class  $s>\frac{r+1}{2r}$  for   $r=\min \{\tau,\sigma,\xi\}$ for all
 $(\tau,\sigma,\xi )\in R_{CG}:= (0, 1)^3$.  And for the study of analyticity the main tool used is the characterization of analytical semigroups due to Liu and Zheng\cite{LiuZ} (See the book by Liu-Zheng - Theorem 1.3.3).   It has proved  that $S_1(t)=e^{\mathcal{B}_1t}$ is analytic in the region $R_{A_1}:=[1/2,1]^3$ and $S_2(t)=e^{\mathcal{B}_2t}$ is analytic in the region $R_{A_2}:=\{(\tau,\sigma,\xi)\in [1/2,1]^3/ \tau=\xi\}$.

In what follows: $C$,  $C_\delta$ and $C_\varepsilon$ will denote positive constants that assume different values in different places.
\subsection{Regularity of the first system}

Next, two lemma are presented where two estimates are tested which are fundamental for the determination of  Gevrey class and the analytics of the associated semigroup $S(t)=e^{\mathcal{B}_1t}$.
\begin{lemma}\label{LemaExponencial}
Let $S(t)=e^{\mathcal{B}_1t}$  be a $C_0$-semigroup on contractions on Hilbert space                   $\mathcal{H}=\mathcal{H}_1$,  the solutions of the system \eqref{Eq05A}--\eqref{Eq07A} and \eqref{Eq08A}--\eqref{Eq09A}  satisfy the inequality
\begin{equation}\label{EstimaExponential} 
\limsup_{|\lambda|\to\infty}\|(i\lambda I-\mathcal{B}_1) ^{-1}\|_{\mathcal{L}(\mathcal{H})}<\infty.
\end{equation}
\end{lemma}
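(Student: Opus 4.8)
The plan is to establish \eqref{EstimaExponential} by a frequency--domain contradiction argument whose engine is the dissipation identity \eqref{eqdissipative}. As a preliminary I would verify that $i\mathbb{R}\subset\rho(\mathcal{B}_1)$: since $0\in\rho(\mathcal{B}_1)$ and $A$ has compact inverse, the embedding $\mathcal{D}(\mathcal{B}_1)\hookrightarrow\mathcal{H}$ is compact, so $\mathcal{B}_1^{-1}$ is compact and $\sigma(\mathcal{B}_1)$ consists only of eigenvalues; hence it suffices to show that $\mathcal{B}_1U=i\lambda U$ with $U\in\mathcal{D}(\mathcal{B}_1)$, $\lambda\in\mathbb{R}$, forces $U=0$. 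Taking real parts and using \eqref{eqdissipative} gives $A^{\tau/2}\Phi=A^{\sigma/2}\Psi=A^{\xi/2}\Theta=0$, hence $\Phi=\Psi=\Theta=0$ because $A^{-s}$ is bounded for $s\ge0$; then the first, third and fifth rows of $\mathcal{B}_1U=i\lambda U$ force $\phi=\psi=\theta=0$ when $\lambda\ne0$, while $\lambda=0$ is already known to lie in $\rho(\mathcal{B}_1)$.

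Now suppose \eqref{EstimaExponential} is false. Then there are $\lambda_n\in\mathbb{R}$ with $|\lambda_n|\to\infty$ and $U_n=(\phi_n,\Phi_n,\psi_n,\Psi_n,\theta_n,\Theta_n)\in\mathcal{D}(\mathcal{B}_1)$ with $\|U_n\|_{\mathcal{H}}=1$ and $F_n:=(i\lambda_n I-\mathcal{B}_1)U_n=(f_n^1,\dots,f_n^6)\to0$ in $\mathcal{H}$. Taking the real part of $\dual{F_n}{U_n}_{\mathcal{H}}$ and invoking \eqref{eqdissipative} yields $\|A^{\tau/2}\Phi_n\|,\|A^{\sigma/2}\Psi_n\|,\|A^{\xi/2}\Theta_n\|\to0$, hence, by boundedness of $A^{-s}$ for $s\ge0$, $\|\Phi_n\|,\|\Psi_n\|,\|\Theta_n\|\to0$ in $L^2(0,L)$. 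Writing $\mathcal{B}_1$ out as in \eqref{operadorAgamma}, the odd rows of the resolvent equation read $i\lambda_n\phi_n-\Phi_n=f_n^1$, $i\lambda_n\psi_n-\Psi_n=f_n^3$, $i\lambda_n\theta_n-\Theta_n=f_n^5$, and since $|\lambda_n|\to\infty$ one also gets $\|\phi_n\|,\|\psi_n\|,\|\theta_n\|\to0$ in $L^2(0,L)$.

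It then remains to recover $\|\phi_{n,x}+\psi_n\|,\|A^{1/2}\psi_n\|,\|A^{1/2}\theta_n\|\to0$, which forces $\|U_n\|_{\mathcal{H}}\to0$ and contradicts $\|U_n\|_{\mathcal{H}}=1$. I would test the second, fourth and sixth rows of the resolvent equation with $\phi_n$, $\psi_n$ and $\theta_n$ respectively. Testing the fourth row with $\psi_n$: the term $\rho_2 i\lambda_n\dual{\Psi_n}{\psi_n}$ equals $-\rho_2\|\Psi_n\|^2-\rho_2\dual{\Psi_n}{f_n^3}$ by the third row and tends to $0$; the coupling $\beta\dual{\Theta_{n,x}}{\psi_n}=-\beta\dual{\Theta_n}{\psi_{n,x}}\to0$ since $\|\psi_{n,x}\|=\|A^{1/2}\psi_n\|\le C$ and $\|\Theta_n\|\to0$; the damping $\mu_2\dual{A^{\sigma}\Psi_n}{\psi_n}=\mu_2\dual{A^{\sigma/2}\Psi_n}{A^{\sigma/2}\psi_n}\to0$ because $\|A^{\sigma/2}\psi_n\|\le C\|A^{1/2}\psi_n\|\le C$; and the shear term $\kappa\dual{\phi_{n,x}+\psi_n}{\psi_n}\to0$ because $\|\phi_{n,x}+\psi_n\|\le C$ and $\|\psi_n\|\to0$, so what survives is $b\|A^{1/2}\psi_n\|^2\to0$. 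The identical computation on the sixth row against $\theta_n$ gives $\delta\|A^{1/2}\theta_n\|^2\to0$. Finally, testing the second row with $\phi_n$ and integrating by parts yields $\kappa\|\phi_{n,x}+\psi_n\|^2=\kappa\dual{\phi_{n,x}+\psi_n}{\psi_n}-\rho_1 i\lambda_n\dual{\Phi_n}{\phi_n}-\mu_1\dual{A^{\tau/2}\Phi_n}{A^{\tau/2}\phi_n}+\rho_1\dual{f_n^2}{\phi_n}$, and every term on the right tends to $0$; for the fractional term one uses $\|A^{\tau/2}\phi_n\|\le C\|A^{1/2}\phi_n\|=C\|\phi_{n,x}\|\le C(\|\phi_{n,x}+\psi_n\|+\|\psi_n\|)\le C$, which is exactly where $\tau\le1$ is used. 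Hence $\|\phi_{n,x}+\psi_n\|\to0$ and the contradiction is complete.

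The genuinely delicate point here is \emph{not} the Timoshenko shear coupling or the equal--wave--speeds obstruction --- these do not intervene, because every velocity component $\Phi,\Psi,\Theta$ carries its own (fractional) damping --- but rather the mixed fractional terms $\dual{A^{\tau}\Phi_n}{\phi_n}$, $\dual{A^{\sigma}\Psi_n}{\psi_n}$, $\dual{A^{\xi}\Theta_n}{\theta_n}$: to conclude that they vanish one must know that $\|A^{\tau/2}\phi_n\|$, $\|A^{\sigma/2}\psi_n\|$, $\|A^{\xi/2}\theta_n\|$ stay bounded, which is precisely where $\tau,\sigma,\xi\in[0,1]$ enters (so that $A^{\cdot/2}$ is dominated by $A^{1/2}$ on the energy space), together with a careful accounting of the boundary terms when integrating $\Theta_{n,x}$ and $\Psi_{n,x}$ by parts against $\psi_n$ and $\theta_n$.
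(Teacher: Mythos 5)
Your argument is correct, and its analytical core coincides with the paper's: you test \eqref{esp-20}, \eqref{esp-40}, \eqref{esp-60} against $\phi$, $\psi$, $\theta$, integrate by parts, and combine with the dissipation identity \eqref{dis-10} and the embeddings $D(A^{1/2})\hookrightarrow D(A^{\tau/2})$, $D(A^{\sigma/2})$, $D(A^{\xi/2})$ (i.e.\ $\tau,\sigma,\xi\le 1$), which is exactly how the paper obtains \eqref{Exponential002} and \eqref{Exponential003}. The difference is packaging: the paper proves the a priori bound $\|U\|_{\mathcal{H}}\le C_\delta\|F\|_{\mathcal{H}}$ directly, absorbing the coupling and lower-order terms by Cauchy--Schwarz and Young with $\varepsilon$, whereas you run a normalized-sequence contradiction and use $|\lambda_n|\to\infty$ together with the odd rows of the resolvent equation to force $\|\phi_n\|,\|\psi_n\|,\|\theta_n\|\to0$, which makes the term-by-term limits cleaner but yields the same estimate in substance.

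One point to flag: your preliminary assertion that $\mathcal{D}(\mathcal{B}_1)\hookrightarrow\mathcal{H}$ is compact (hence $\sigma(\mathcal{B}_1)$ is pure point spectrum) is stated without proof and is not obvious here, because the graph norm of $\mathcal{B}_1$ only controls combinations such as $\kappa(\phi_x+\psi)_x-\mu_1A^\tau\Phi$ and $\delta A\theta+\gamma\Psi_x+KA^\xi\Theta$, not the individual fractional terms; for instance when $\xi>\frac{1}{2}$ one cannot immediately extract regularity of $\theta$ beyond $D(A^{1/2})$ from these data, so precompactness of the position components needs an argument. Fortunately this step is dispensable: the inequality the lemma is actually used for is precisely the a priori bound your contradiction argument establishes (the negation of \eqref{EstimaEquivExp1} already produces your sequences $\lambda_n$, $U_n$ without assuming $i\lambda_n\in\rho(\mathcal{B}_1)$), and the paper settles $i\mathbb{R}\subset\rho(\mathcal{B}_1)$ separately in Lemma \ref{EixoImaginary01} by a contradiction argument built on the same estimates \eqref{dis-10}, \eqref{Exponential002}, \eqref{Exponential003}; you could simply do the same and drop the compactness remark.
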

\begin{proof} 
To show the \eqref{EstimaExponential} inequality,  it suffices to show that,  given $\delta>0$ there exists a constant $C_\delta>0$ such that  the solutions of the system  \eqref{Eq05A}--\eqref{Eq07A} and \eqref{Eq08A}--\eqref{Eq09A}   for $|\lambda|>\delta$ satisfy the inequality
\begin{equation}\label{EstimaEquivExp1}
 \dfrac{\|U\|_{\mathcal{H}}}{\|F\|_{\mathcal{H}}}\leq C_\delta\Longleftrightarrow \|U\|_{\mathcal{H}}=\|(i\lambda I-\mathcal{B}_1)^{-1} F\|_{\mathcal{H}} \leq C_\delta \|F\|_\mathcal{H}.
\end{equation}
If $\lambda\in \mathbb{R}$ and $F=(f^1,f^2,f^3,f^4, f^5, f^6)\in \mathcal{H}$ then the solution $U=(\phi,\Phi,\psi, \Psi,\theta,\Theta)\in\hbox{D}(\mathcal{B}_1)$ the resolvent equation $(i\lambda I- \mathcal{B}_1)U=F$ can be written in the form
\begin{eqnarray}
i\lambda \phi-\Phi &=& f^1\quad\rm{in}\quad D(A^\frac{1}{2}),\label{esp-10}\\
i\lambda \Phi-\frac{\kappa}{\rho_1}(\phi_x+\psi)_x+\frac{\mu_1}{\rho_1}A^\tau \Phi&=& f^2\quad\rm{in}\quad D(A^0),\label{esp-20}\\
i\lambda  \psi-\Psi &=&f^3\quad \rm{in}\quad D(A^\frac{1}{2}),\label{esp-30}\\
i\lambda \Psi+\dfrac{b}{\rho_2}A\psi+\dfrac{\kappa}{\rho_2}(\phi_x+\psi)+\frac{\beta}{\rho_2}\Theta_x+\dfrac{\mu_2}{\rho_2}A^\sigma \Psi &=& f^4 \quad\rm{in}\quad D(A^0),\label{esp-40}\\
i\lambda\theta-\Theta&=&f^5\quad{ \rm in}\quad D(A^\frac{1}{2}),\label{esp-50}\\
i\lambda\Theta+\dfrac{\delta}{\rho_3}A\theta+\dfrac{\gamma}{\rho_3}\Psi_x+\dfrac{K}{\rho_3}A^\xi\Theta&=& f^6\quad {\rm in}\quad D(A^0). \label{esp-60}
\end{eqnarray}
Using the fact that the operator $\mathcal{B}_1$ is dissipative,   result in  
\begin{multline}\label{dis-10}
\beta\gamma\mu_1\|A^\frac{\tau}{2}\Phi\|^2+\kappa\gamma\mu_2\|A^\frac{\sigma}{2} \Psi\|^2+\dfrac{\beta\kappa K^2}{\rho_3}\|A^\frac{\xi}{2}\Theta\|^2
= \text{Re}\dual{(i\lambda -\mathcal{B}_1)U}{U}\\= \text{Re}\dual{F}{U}\leq \|F\|_\mathcal{H}\|U\|_\mathcal{H}.
\end{multline}
On the other hand,  performing the duality product of \eqref{esp-20} for $\beta\gamma\rho_1 \phi$,    and remembering that the operators $A^\nu$ for all $\nu\in\mathbb{R}$ are
self-adjoint, resulting in
\begin{multline*}
\beta\kappa\gamma\dual{(\phi_x+\psi)}{\phi_x}= \beta\gamma\rho_1\|\Phi\|^2+\beta\gamma\rho_1\dual{\Phi}{f^1}\\
 -i\lambda\mu_1\beta\gamma\|A^\frac{\tau}{2}\phi\|^2
+\mu_1\beta\gamma\dual{A^\frac{\tau}{2} f^1}{A^\frac{\tau}{2} \phi}
+\beta\gamma\rho_1\dual{f^2}{\phi},
\end{multline*}
now performing the duality product of \eqref{esp-40} for $\beta\gamma\rho_2 \psi$,   and remembering that the operators $A^\nu$ for all $\nu\in\mathbb{R}$ are
self-adjoint, leading to
\begin{multline*}
\beta\kappa\gamma\dual{(\phi_x+\psi)}{\psi}+b\beta\gamma\|A^\frac{1}{2}\psi\|^2=\beta\gamma\rho_2\|\Psi\|^2
+\beta\gamma\rho_2\dual{\Psi}{f^3}\\
+\beta^2\gamma\dual{\Theta}{\psi_x}-i\lambda\beta\gamma\mu_2\|A^\frac{\sigma}{2}\psi\|^2+\beta\gamma\mu_2\dual{A^\frac{\sigma}{2}f^3}{A^\frac{\sigma}{2}\psi}+\beta\gamma\rho_2\dual{f^4}{\psi}.
\end{multline*}
Adding the last 2 equations, result in 
\begin{multline}\label{Eq001Exponential}
\beta\kappa\gamma\|\phi_x+\psi\|^2+b\beta\gamma\|A^\frac{1}{2}\psi\|^2=
\beta\gamma\rho_1\|\Phi\|^2+\beta\gamma\rho_2\|\Psi\|^2
-i\lambda\beta\gamma\{ \mu_1\|A^\frac{\tau}{2}u\|^2\\
+\mu_2\|A^\frac{\sigma}{2}\psi\|^2\}
 +\beta\gamma\rho_1\dual{\Phi}{f^1}+\beta\gamma\rho_2\dual{\Psi}{f^3}\\
+\beta\gamma\mu_1\dual{A^\frac{\tau}{2}f^1}{A^\frac{\tau}{2}\phi} +\beta\gamma\mu_2\dual{A^\frac{\sigma}{2}f^3}{A^\frac{\sigma}{2}\phi}\\
+\beta\gamma\rho_1\dual{f^2}{\phi}+\beta\gamma\rho_2\dual{f^4}{\psi}+\beta^2\gamma\dual{\Theta}{\psi_x}.
\end{multline}
Taking real part,  using norm $\|F\|_\mathcal{H}$ and $\|U\|_\mathcal{H}$ and     applying Cauchy-Schwarz and Young,  for $\varepsilon>0$ exists  $C_\varepsilon>0$ which does not depend on $\lambda$,  such that
\begin{multline*}
\beta\kappa\gamma\|\phi_x+\psi\|^2+b\beta\gamma\|A^\frac{1}{2}\psi\|^2 
\leq \beta\gamma\rho_1\|\Phi\|^2+\beta\gamma\rho_2\|\Psi\|^2+\varepsilon\|A^\frac{1}{2}\psi\|^2+C_\varepsilon\|\Theta\|^2
+C_\delta\|F\|_\mathcal{H}\|U\|_\mathcal{H}.
\end{multline*}
From  estimative \eqref{dis-10}  and the fact $0\leq\frac{\tau}{2}, \; 0\leq\frac{\sigma}{2}$  and $0\leq\frac{\xi}{2}$,  the continuous embedding $D(A^{\theta_2}) \hookrightarrow D(A^{\theta_1}),\;\theta_2>\theta_1$,  lead to
\begin{eqnarray}\label{Exponential002}
\beta\kappa\gamma\|\phi_x+\psi\|^2+b\beta\gamma\|A^\frac{1}{2}\psi\|^2&\leq& C_\delta\|F\|_\mathcal{H}\|U\|_\mathcal{H}.
\end{eqnarray}
On the other hand,  performing the duality product of \eqref{esp-60} for $\beta\kappa\rho_3 \theta$,    and remembering that the operators $A^\nu$ for all $\nu\in\mathbb{R}$ are
self-adjoint and from \eqref{esp-50},  result in 
\begin{multline*}
\beta\kappa\delta\|A^\frac{1}{2}\theta\|^2=\delta\beta\kappa\|\Theta\|^2+\beta\kappa\rho_3\dual{\Theta}{f^5}+\beta\kappa\gamma\dual{\Psi}{\theta_x}\\
-i\kappa\beta K\lambda\|A^\frac{\xi}{2}\theta\|^2+\kappa\beta K\dual{A^\frac{\xi}{2}f^5}{A^\frac{\xi}{2}\theta}+\beta\kappa\rho_3\dual{f^6}{\theta}
\end{multline*}
Taking real part and by using the inequalities Cauchy-Schwarz and Young and norms $\|F\|_\mathcal{H}$ and $\|U\|_\mathcal{H}$,  from $|\lambda|>1$,   for $\varepsilon>0$ exists $C_\varepsilon>0$ which does not depend on $\lambda$ such that
\begin{multline*}
\beta\kappa\delta\|A^\frac{1}{2}\theta\|^2
\leq C\{\|\Theta\|^2+\|\Theta\|\|f^5\|+\|A^\frac{\xi}{2}f^5\|\|A^\frac{\xi}{2}\theta\|+\|f^6\|\|\theta\|\}+C_\varepsilon\|\Psi\|^2+\varepsilon\|A^\frac{1}{2}\theta\|^2.
\end{multline*}
Then, from estimative \eqref{dis-10}  and the fact $0\leq\frac{\sigma}{2}$ and $0\leq\frac{\xi}{2}\leq\frac{1}{2}$  the continuous embedding $D(A^{\theta_2}) \hookrightarrow D(A^{\theta_1}),\;\theta_2>\theta_1$,  lead to
\begin{equation}\label{Exponential003}
\beta\kappa\delta\|A^\frac{1}{2}\theta\|^2\leq C_\delta\|F\|_\mathcal{H}\|U\|_\mathcal{H}.
\end{equation}
Finally,  from estimates \eqref{dis-10}, \eqref{Exponential002} and \eqref{Exponential003},   the proof of this lemma is completed.
\end{proof}
\begin{lemma}\label{Regularidad}
Let $\delta> 0$. There exists a constant $C_\delta > 0$ such that the solutions of the system \eqref{Eq05A}--\eqref{Eq07A} and \eqref{Eq08A}--\eqref{Eq09A}  for $|\lambda|\geq \delta$ satisfy the inequalities
\begin{eqnarray}\label{EqPrincipalLema}
\hspace*{-0.5cm}(i)\; |\lambda|\beta\gamma[\kappa\|\phi_x+\psi\|^2+b\|A^\frac{1}{2}\psi\|^2]\hspace*{-0.25cm} &\leq
&\hspace*{-0.25cm} |\lambda|\beta\gamma[\rho_1\|\Phi\|^2+\rho_2\|\Psi\|^2] +C_\delta\|F\|_\mathcal{H}\|U\|_\mathcal{H}.\\
\nonumber
\\
\label{EqPrincipalLemaA}
(ii)\; \beta\kappa\delta|\lambda|\|A^\frac{1}{2}\theta\|^2 &\leq & \beta\kappa\rho_3|\lambda|\|\Theta\|^2+ C_\delta\|F\|_\mathcal{H}\|U\|_\mathcal{H}.
\end{eqnarray}
\end{lemma}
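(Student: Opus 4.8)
The plan is to follow the scheme of Lemma~\ref{LemaExponencial}, but to test the ``acceleration'' equations against the \emph{velocity} unknowns $\Phi,\Psi,\Theta$ instead of against $\phi,\psi,\theta$, and to extract \emph{imaginary} (rather than real) parts. This is precisely what produces the extra factor $|\lambda|$ on both sides of \eqref{EqPrincipalLema}--\eqref{EqPrincipalLemaA}: the imaginary part of $i\lambda\langle\Phi,\Phi\rangle$ is $\lambda\|\Phi\|^2$, and after integrating by parts the shear, bending and thermal terms and inserting the kinematic relations \eqref{esp-10}, \eqref{esp-30}, \eqref{esp-50} (so that $\Phi_x=i\lambda(\phi_x+\psi)-i\lambda\psi-f^1_x$, $A^{1/2}\Psi=i\lambda A^{1/2}\psi-A^{1/2}f^3$, $A^{1/2}\Theta=i\lambda A^{1/2}\theta-A^{1/2}f^5$) the terms $i\lambda\|\phi_x+\psi\|^2$, $i\lambda\|A^{1/2}\psi\|^2$, $i\lambda\|A^{1/2}\theta\|^2$ surface as well; taking imaginary parts keeps all of these with a $\lambda$ in front while annihilating the real self-adjoint damping contributions $\mu_1\|A^{\tau/2}\Phi\|^2$, $\mu_2\|A^{\sigma/2}\Psi\|^2$, $\frac{K}{\rho_3}\|A^{\xi/2}\Theta\|^2$.

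For (i) I would take the $\mathcal{H}_1$-duality products of \eqref{esp-20} with $\rho_1\beta\gamma\Phi$ and of \eqref{esp-40} with $\rho_2\beta\gamma\Psi$, add them, integrate by parts, substitute the kinematic relations and take imaginary parts, arriving at an identity of the shape
\[
|\lambda|\beta\gamma\big[\kappa\|\phi_x+\psi\|^2+b\|A^{1/2}\psi\|^2\big]=|\lambda|\beta\gamma\big[\rho_1\|\Phi\|^2+\rho_2\|\Psi\|^2\big]+\beta^2\gamma\,\mathrm{Im}\langle\Theta_x,\Psi\rangle+\mathcal{R}_1 ,
\]
and, testing \eqref{esp-60} with $\rho_3\beta\kappa\Theta$ and proceeding the same way,
\[
\beta\kappa\delta|\lambda|\|A^{1/2}\theta\|^2=\beta\kappa\rho_3|\lambda|\|\Theta\|^2+\gamma\beta\kappa\,\mathrm{Im}\langle\Psi_x,\Theta\rangle+\mathcal{R}_2 .
\]
Here $\mathcal{R}_1,\mathcal{R}_2$ collect the terms carrying the data $f^1,\dots,f^6$ and the low-order brackets (such as $\mathrm{Re}\langle\phi_x+\psi,\psi\rangle$, $\mathrm{Im}\langle\phi_x+\psi,\Psi\rangle$, and those involving $A^{1/2}f^3$, $A^{1/2}f^5$); each of them is a product of an $\mathcal{H}$-norm of $U$ and an $\mathcal{H}$-norm of $F$, hence $\le C_\delta\|F\|_{\mathcal{H}}\|U\|_{\mathcal{H}}$ by Cauchy--Schwarz and Young, using $|\lambda|\ge\delta$, the continuous embeddings $D(A^{\theta_2})\hookrightarrow D(A^{\theta_1})$ for $\theta_2>\theta_1$, and the estimates \eqref{dis-10}, \eqref{Exponential002}, \eqref{Exponential003} and $\|U\|_{\mathcal{H}}\le C_\delta\|F\|_{\mathcal{H}}$ already obtained in Lemma~\ref{LemaExponencial}.

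The delicate point is the thermoelastic coupling: one cannot estimate $\mathrm{Im}\langle\Theta_x,\Psi\rangle$ or $\mathrm{Im}\langle\Psi_x,\Theta\rangle$ by bounding $\|\Theta_x\|$ or $\|\Psi_x\|$, since those $D(A^{1/2})$-norms are not part of the energy $\|U\|_{\mathcal{H}}$. Instead I would eliminate the derivative via the \emph{other} resolvent equation. By integration by parts, $\langle\Theta_x,\Psi\rangle=-\langle\Theta,\Psi_x\rangle$, $\langle\Psi_x,\Theta\rangle=-\langle\Psi,\Theta_x\rangle$, and in particular $\mathrm{Im}\langle\Theta_x,\Psi\rangle=\mathrm{Im}\langle\Psi_x,\Theta\rangle$, so the two estimates are genuinely intertwined and must be closed together. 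Substituting \eqref{esp-60} (solved for $\Psi_x$) into $-\langle\Theta,\Psi_x\rangle$ expresses $\beta^2\gamma\,\mathrm{Im}\langle\Theta_x,\Psi\rangle$ as $-|\lambda|\beta^2\rho_3\|\Theta\|^2+|\lambda|\beta^2\delta\|A^{1/2}\theta\|^2$ plus remainders of the admissible $\|F\|_{\mathcal{H}}\|U\|_{\mathcal{H}}$ type (the $A^\xi\Theta$ piece being real drops out under $\mathrm{Im}$), and symmetrically substituting \eqref{esp-40} (solved for $\Theta_x$) into $-\langle\Psi,\Theta_x\rangle$ expresses $\gamma\beta\kappa\,\mathrm{Im}\langle\Psi_x,\Theta\rangle$ as $-|\lambda|\gamma\kappa\rho_2\|\Psi\|^2+|\lambda|\gamma\kappa b\|A^{1/2}\psi\|^2$ plus admissible remainders. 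Feeding these back into the two displayed identities, the coupling contributes only $|\lambda|$-multiples of the velocity norms $\|\Phi\|^2,\|\Psi\|^2,\|\Theta\|^2$ plus $\|F\|_{\mathcal{H}}\|U\|_{\mathcal{H}}$; using one estimate inside the other to absorb the reappearing $|\lambda|\|A^{1/2}\theta\|^2$ (resp.\ $|\lambda| b\|A^{1/2}\psi\|^2$) against the $|\lambda|\|\Theta\|^2$ (resp.\ $|\lambda|\rho_2\|\Psi\|^2$) term, together with a Young absorption of $\varepsilon|\lambda|$-fractions of the potential-energy norms into the left-hand sides, one is left with \eqref{EqPrincipalLema}--\eqref{EqPrincipalLemaA}. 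Finally the sign of $\lambda$ is immaterial: multiplying the imaginary-part identities by $\mathrm{sgn}(\lambda)$ turns every $\lambda$ into $|\lambda|$ and every bracket is controlled in absolute value, so the case $\lambda<-\delta$ is identical. I expect this coupling step to be the main obstacle --- keeping track of which substitution creates a fresh factor $|\lambda|$ and making sure it is absorbed rather than left free --- while the remaining bookkeeping of $\mathcal{R}_1,\mathcal{R}_2$ is routine.
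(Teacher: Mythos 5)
Your two ``imaginary--part'' identities are correct (they are the same identities the paper's multipliers $\lambda\phi,\lambda\psi,\lambda\theta$ produce, up to the data terms), and you have correctly located the only dangerous term, the coupling $X:=\mathrm{Im}\dual{\Theta_x}{\Psi}=\mathrm{Im}\dual{\Psi_x}{\Theta}$, which pairs two velocity gradients and is not controlled by $\|U\|_{\mathcal H}$. The gap is in how you close it: the cross--substitution is circular. Substituting \eqref{esp-60} for $\Psi_x$ gives, up to admissible remainders,
\begin{equation*}
\beta\kappa\gamma X=\beta\kappa\,\lambda\bigl[\delta\|A^{\frac12}\theta\|^2-\rho_3\|\Theta\|^2\bigr]+O\bigl(\|F\|_{\mathcal H}\|U\|_{\mathcal H}\bigr),
\end{equation*}
i.e.\ this substitution is nothing but your identity for item $(ii)$ in disguise: $X$ \emph{is} (a multiple of) the quantity that $(ii)$ asserts to be $O(\|F\|_{\mathcal H}\|U\|_{\mathcal H})$, so feeding it into the first identity proves $(i)$ only if $(ii)$ is already known. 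Symmetrically, substituting \eqref{esp-40} for $\Theta_x$ gives $\beta X=\lambda[\,b\|A^{\frac12}\psi\|^2-\rho_2\|\Psi\|^2]+O(\|F\|_{\mathcal H}\|U\|_{\mathcal H})$, and closing $(ii)$ this way requires $\lambda b\|A^{\frac12}\psi\|^2\le\lambda\rho_2\|\Psi\|^2+C\|F\|_{\mathcal H}\|U\|_{\mathcal H}$, which is not available: item $(i)$ only compares the \emph{sums} $\kappa\|\phi_x+\psi\|^2+b\|A^{\frac12}\psi\|^2$ and $\rho_1\|\Phi\|^2+\rho_2\|\Psi\|^2$, and if you insert it here the term $+\lambda\gamma\kappa\rho_1\|\Phi\|^2$ survives, a quantity of size $|\lambda|\,\|F\|_{\mathcal H}\|U\|_{\mathcal H}$ that no Young absorption removes (it is kinetic, not a fraction of the left-hand side). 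What your two substitutions genuinely yield is the elimination of $X$ between the two identities, i.e.\ the single combined estimate $\lambda\kappa\|\phi_x+\psi\|^2\le\lambda\rho_1\|\Phi\|^2+C_\delta\|F\|_{\mathcal H}\|U\|_{\mathcal H}$, which is strictly weaker than $(i)$ and $(ii)$ together: you have two identities and three independent ``equipartition defects'', so the system cannot be closed by shuffling them into one another. What is missing is an independent bound $|X|\le C_\delta\|F\|_{\mathcal H}\|U\|_{\mathcal H}$, which your scheme never produces (from the dissipation \eqref{dis-10} alone such a bound would follow essentially only when $\sigma+\xi\ge1$, not on the whole parameter range).

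The paper's proof is organized so that this obstruction never appears in that form: it tests \eqref{esp-20}, \eqref{esp-40}, \eqref{esp-60} against $\lambda\phi$, $\lambda\psi$, $\lambda\theta$, removes the terms $\dual{\lambda\Phi}{f^1}$, $\dual{\lambda\Psi}{f^3}$ by a second use of the equations (identity \eqref{Eq002Gevrey}), and in the resulting identities \eqref{Eq003Gevrey} and \eqref{Eq005Ltheta} the thermal coupling is written as $i\beta^2\gamma\dual{\Theta}{\psi_x}$, resp.\ $-i\beta\kappa\gamma\dual{\psi_x}{\Theta}$, i.e.\ paired with the \emph{displacement} gradient; since $\|\psi_x\|=\|A^{\frac12}\psi\|$ is part of $\|U\|_{\mathcal H}$, that term is bounded by $\varepsilon\|A^{\frac12}\psi\|^2+C_\varepsilon\|\Theta\|^2\le C_\delta\|F\|_{\mathcal H}\|U\|_{\mathcal H}$ through Lemma \ref{LemaExponencial}, and the two items are then obtained independently of one another, with no cross-feeding. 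So the decisive issue is not real versus imaginary parts, but which factor the coupling ends up attached to; as long as your identities leave it in the form $\mathrm{Im}\dual{\Psi_x}{\Theta}$ and you resolve it only by re-inserting the resolvent equations, the lemma is not proved.
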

\begin{proof}
{\bf Item $(i)$:} Performing the duality product of \eqref{esp-20} for $\beta\gamma\rho_1\lambda \phi$,   and remembering that the operators $A^\nu$ for all $\nu\in \mathbb{R} $ are self-adjunct,  result in 
\begin{eqnarray*}
\beta\kappa\gamma\lambda\dual{(\phi_x+\psi)}{\phi_x}& =&\beta\gamma\rho_1\lambda\|\Phi\|^2+\beta \gamma\rho_1\dual{\lambda \Phi}{f^1}-i\mu_1\beta\gamma\|A^\frac{\tau}{2}\Phi\|^2\\
& & -i\mu_1\beta\gamma\dual{A^\tau \Phi}{f^1}
+i\beta\gamma\rho_1\dual{f^2}{\Phi}+i\beta\gamma\rho_1\dual{f^2}{f^1}.
\end{eqnarray*}
Now performing the duality product of \eqref{esp-40} for $\beta\gamma\rho_2\lambda \psi$,  and remembering that the operators $A^\nu$ for all $\nu\in \mathbb{R} $ are
self-adjunct leads to 
\begin{eqnarray*}
\lambda\beta\gamma[\kappa\dual{(\phi_x+\psi)}{\psi}+
b\|A^\frac{1}{2}\psi\|^2]
\hspace*{-0.3cm}& = &\hspace*{-0.3cm} \beta\gamma\rho_2\lambda\|\Psi\|^2+\beta\gamma\rho_2\dual{\lambda \Psi}{f^3}+i\beta^2\gamma\dual{\Theta}{\psi_x} \\
& &\hspace*{-0.3cm} +i\beta^2\gamma\dual{\Theta}{f_x^3}  - i\mu_2\beta\gamma\|A^\frac{\sigma}{2}\Psi\|^2-i\mu_2\beta\gamma\dual{A^\sigma \Psi}{f^3}\\
& & \hspace*{-0.3cm} +i\beta\gamma\rho_2\dual{f^4}{\Psi} +i\beta\gamma\rho_2\dual{f^4}{f^3}.
\end{eqnarray*}
Adding the last 2 equations,  result in 
\begin{multline}\label{Eq001Gevrey}
\lambda\beta\gamma[\kappa\|\phi_x+\psi\|^2+b\|A^\frac{1}{2}\psi\|^2] =
\lambda\beta\gamma[\rho_1\|\Phi\|^2+\rho_2\|\Psi\|^2]
-i\beta\gamma\{\mu_1\|A^\frac{\tau}{2}\Phi\|^2\\+\mu_2\|A^\frac{\sigma}{2}\Psi\|^2\}+\beta\gamma\rho_1\dual{\lambda \Phi}{f^1}+\beta\gamma\rho_2\dual{\lambda \Psi}{f^3}-i\beta\gamma\{\mu_1 \dual{A^\tau \Phi}{f^1}\\
+\mu_2\dual{A^\sigma \Psi}{f^3} \}
+i\beta\gamma\rho_1\dual{f^2}{\Phi} +i\beta\gamma\rho_2\dual{f^4}{\Psi}
+i\beta\gamma\rho_1\dual{f^2}{f^1}\\
+i\beta\gamma\rho_2\dual{f^4}{f^3}+i\beta^2\gamma\dual{\Theta}{\psi_x}+i\beta^2\gamma\dual{\Theta}{f_x^3}.   
\end{multline}
On the other hand, from \eqref{esp-20}  and \eqref{esp-40},  result in 
\begin{multline}\label{Eq002Gevrey}
\beta\gamma\{\rho_1\dual{\lambda \Phi}{f^1}+\rho_2\dual{\lambda \Psi}{f^3}\}
 =i\beta\gamma\{\kappa\dual{(\phi_x+\psi)}{f^1_x}\\
 +\mu_1\dual{A^\tau \Phi}{f^1}-\rho_1\dual{f^2}{f^1}
 +b\dual{A^\frac{1}{2}\psi}{A^\frac{1}{2}f^3}+\kappa\dual{(\phi_x+\psi)}{f^3}\\
 -\beta\dual{\Theta}{f_x^3}+\mu_2\dual{A^\sigma \Psi}{f^3}-\rho_2\dual{f^4}{f^3}\}.
\end{multline}
Using the identity \eqref{Eq002Gevrey} in the \eqref{Eq001Gevrey} equation and simplifying,  lead to 
\begin{multline}\label{Eq003Gevrey}
\lambda\beta\gamma[\kappa\|\phi_x+\psi\|^2+b\|A^\frac{1}{2}\psi\|^2] =
\lambda\beta\gamma[\rho_1\|\Phi\|^2+\rho_2\|\Psi\|^2]
\\-i\{\mu_1\|A^\frac{\tau}{2}\Phi\|^2
+\mu_2\|A^\frac{\sigma}{2}\Psi\|^2\}
+i\beta\gamma\kappa\dual{\phi_x}{f_x^1}+i\beta\gamma\kappa\dual{\psi}{f_x^1}\\+ib\beta\gamma\dual{A^\frac{1}{2}\psi}{A^\frac{1}{2}f^3}
i\beta\gamma\kappa\dual{\phi_x}{f^3}+i\beta\gamma\kappa\dual{\psi}{f^3}\\
+i\beta\gamma\rho_1\dual{f^2}{\Phi}
+i\beta\gamma\rho_2\dual{f^4}{\Psi}+i\beta^2\gamma\dual{\Theta}{\psi_x}.
\end{multline}
As for $\varepsilon>0$  exists $C_\varepsilon>$ such that  $|i\beta^2\gamma\dual{\Theta}{\psi_x}|\leq \varepsilon\|A^\frac{1}{2}\psi\|^2+C_\varepsilon\|\Theta\|^2$, taking the real part,  and applying the Cauchy-Schwarz and Young inequalities,   estimative \eqref{EstimaEquivExp1}  of Lemma\eqref{LemaExponencial} and using the definitions of the $F$ and $U$ norm in \eqref{Eq003Gevrey}  the proof of item $(i)$  this lemma is completed.

On the other hand,  performing the duality product of \eqref{esp-60} for $\beta\kappa\rho_3\lambda \theta$,   and remembering that the operators $A^\theta$ for all $\theta\in \mathbb{R} $ are self-adjunct,  result in
\begin{multline}
\beta\kappa\delta\lambda\|A^\frac{1}{2}\theta\|^2=\beta\kappa\rho_3\dual{\lambda \Theta}{i\lambda\theta}-\beta\kappa\gamma\dual{\psi_x}{\lambda\theta}
+\beta\kappa K\dual{A^\xi\Theta}{\lambda\theta}+\beta\kappa\rho_3\dual{f^6}{\lambda\theta}.
\label{Eq000Ltheta}
\end{multline}
As
\begin{eqnarray}
\label{Eq001Ltheta}
\rho_3\dual{\lambda \Theta}{i\lambda\theta}&=&
\rho_3\lambda\|\Theta\|^2+i\delta\dual{A^\frac{1}{2}\theta}{A^\frac{1}{2}f^5}-i\gamma\dual{\Psi}{f^5_x} +i K\dual{A^\xi\Theta}{f^5} -i\rho_3\dual{f^6}{f^5}.\\
\label{Eq002Ltheta}
-\gamma\dual{\psi_x}{\lambda\theta} &=&-i\gamma\dual{\psi_x}{\Theta}-i\gamma\dual{\psi_x}{f^5}.\\
\label{Eq003Ltheta}
 K\dual{A^\xi\Theta}{\lambda\theta} &=& -i K\|A^\frac{\xi}{2}\Theta\|^2-i K\dual{A^\xi\Theta}{f^5}.
\\
\label{Eq004Ltheta}
\rho_3\dual{f^6}{\lambda\theta} &= & i\rho_3\dual{f^6}{\Theta}+i\rho_3\dual{f^6}{f^5}.
\end{eqnarray}
Using \eqref{Eq001Ltheta}--\eqref{Eq004Ltheta} in \eqref{Eq000Ltheta},  lead to
\begin{multline}\label{Eq005Ltheta}
\beta\kappa\delta\lambda\|A^\frac{1}{2}\theta\|^2 = \beta\kappa\rho_3\lambda\|\Theta\|^2+i\beta\kappa\delta\dual{A^\frac{1}{2}\theta}{A^\frac{1}{2}f^5}-i\beta\kappa\gamma\dual{\Psi}{f^5_x}-i\beta\kappa\gamma\dual{\psi_x}{\Theta}\\
 -i\beta\kappa\gamma\dual{\psi_x}{f^5}-i\beta\kappa K\|A^\frac{\xi}{2}\Theta\|^2
 +i\beta\kappa\rho_3\dual{f^6}{\Theta}.
\end{multline}
Taking real part of the equation \eqref{Eq005Ltheta} and using the inequalities Cauchy-Schwarz and Young and norms $\|F\|_\mathcal{H}$ and $\|U\|_\mathcal{H}$,  from $|\lambda|>1$,   for $\varepsilon>0$ exists $C_\varepsilon>0$ such that 
\begin{multline*}
\beta\kappa\delta|\lambda|\|A^\frac{1}{2}\theta\|^2 \leq\beta\kappa\rho_3|\lambda|\|\Theta\|^2+ C\{\|A^\frac{1}{2}\theta\|\|A^\frac{1}{2}f^5\|+\|\Psi\|\|f^5_x\|\\
+\|\psi_x\|^2+\|\Theta\|^2+\|\psi_x\|\|f^5\|+\|f^6\|\|\theta\| \}.
\end{multline*}
Then, from Lemma\eqref{LemaExponencial} and  as $\|\psi_x\|^2=\|A^\frac{1}{2}\psi\|^2$,   the proof of item $(ii)$ of this lemma is finished.
\end{proof}
\subsubsection{ Gevrey class of the first system}
\begin{definition}\label{Def1.1Tebou} Let $t_0\geq 0$ be a real number.  A strongly continuous semigroup $S(t)$, defined on a Banach space $ \mathcal{H}$, is of Gevrey class $s > 1$ for $t > t_0$, if $S(t)$ is infinitely differentiable for $t > t_0$, and for every compact set $K \subset (t_0,\infty)$ and each $\mu > 0$, there exists a constant $ C = C(\mu, K) > 0$ such that
	\begin{equation}\label{DesigDef1.1}
	||S^{(n)}(t)||_{\mathcal{L}( \mathcal{H})} \leq  C\mu ^n(n!)^s,  \text{ for all } \quad t \in K, n = 0,1,2...
	\end{equation}
\end{definition}
\begin{theorem}[\cite{TaylorM}]\label{Theorem1.2Tebon}
	Let $S(t)$  be a strongly continuous and bounded semigroup on a Hilbert space $ \mathcal{H}$. Suppose that the infinitesimal generator $\mathcal{B}$ of the semigroup $S(t)$ satisfies the following estimate, for some $0 < \eta < 1$:
	\begin{equation}\label{Eq1.5Tebon2020}
	\lim\limits_{|\lambda|\to\infty} \sup |\lambda |^\eta ||(i\lambda I-\mathcal{B})^{-1}||_{\mathcal{L}( \mathcal{H})} < \infty. 
	\end{equation}
	Then $S(t)$  is of Gevrey  class  $s$   for $t>0$, for every   $s>\dfrac{1}{\eta}$.
\end{theorem}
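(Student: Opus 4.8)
The plan is to realise the semigroup through a Dunford--Riesz contour integral, to use hypothesis \eqref{Eq1.5Tebon2020} together with the boundedness of $S(t)$ to push the contour into a region of the complex plane that is ``parabolic'' rather than conical, and then to differentiate under the integral sign and estimate by a change of variables that produces a Gamma function, which by Stirling's formula becomes the bound \eqref{DesigDef1.1}. First I would record the consequences of the hypotheses: since $S(t)$ is a bounded $C_0$-semigroup, the open right half-plane lies in $\rho(\mathcal B)$ with $\|(\lambda I-\mathcal B)^{-1}\|\le M/\mathrm{Re}\,\lambda$ there, while \eqref{Eq1.5Tebon2020} gives $C_0,\Lambda_0>0$ with $\|(i\tau I-\mathcal B)^{-1}\|\le C_0|\tau|^{-\eta}$ for $|\tau|\ge\Lambda_0$. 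Writing $\lambda I-\mathcal B=(i\tau I-\mathcal B)\bigl(I+\xi(i\tau I-\mathcal B)^{-1}\bigr)$ for $\lambda=\xi+i\tau$ and expanding in a Neumann series, one sees that whenever $|\xi|\le\tfrac12 C_0^{-1}|\tau|^\eta=:c|\tau|^\eta$ the point $\lambda$ lies in $\rho(\mathcal B)$ and $\|(\lambda I-\mathcal B)^{-1}\|\le 2C_0|\tau|^{-\eta}$. Hence $\rho(\mathcal B)$ contains the region $\Sigma:=\{\xi+i\tau:\ |\tau|\ge\Lambda_1,\ \xi\ge -c|\tau|^\eta\}$, joined through $\rho(\mathcal B)$ to a bounded piece meeting the right half-plane, and on $\Sigma$ one has $\|(\lambda I-\mathcal B)^{-1}\|\le C_1|\mathrm{Im}\,\lambda|^{-\eta}$.

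Next I would take $\Gamma$ to be the positively oriented boundary of $\Sigma$, consisting of the two branches $\lambda=\mp c\tau^\eta\pm i\tau$ ($\tau\ge\Lambda_1$) closed by a bounded arc lying in $\rho(\mathcal B)$. For $x\in D(\mathcal B^2)$ the inversion formula $S(t)x=\tfrac{1}{2\pi i}\int_{a-i\infty}^{a+i\infty}e^{\lambda t}(\lambda I-\mathcal B)^{-1}x\,d\lambda$ ($a>0$, $t>0$) holds; using the resolvent bounds just obtained and Cauchy's theorem, the Bromwich line can be deformed onto $\Gamma$ without changing the value, and since $|e^{\lambda t}|=e^{-ct\tau^\eta}$ decays superpolynomially along $\Gamma$ while $\|(\lambda I-\mathcal B)^{-1}\|$ grows only polynomially, the resulting integral converges in operator norm for every $t>0$. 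Thus $S(t)=\tfrac{1}{2\pi i}\int_\Gamma e^{\lambda t}(\lambda I-\mathcal B)^{-1}\,d\lambda$ and, by the same decay, $S^{(n)}(t)=\tfrac{1}{2\pi i}\int_\Gamma \lambda^n e^{\lambda t}(\lambda I-\mathcal B)^{-1}\,d\lambda$ for all $n$ and all $t>0$.

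It then remains to estimate $S^{(n)}(t)$. Along the upper branch $|\lambda|\le C_2\tau$, $\|(\lambda I-\mathcal B)^{-1}\|\le C_1\tau^{-\eta}$, $|e^{\lambda t}|=e^{-ct\tau^\eta}$ and $|d\lambda|\le C_2\,d\tau$, so (the bounded arc contributing at most $C^n$ for a fixed $C$) $\|S^{(n)}(t)\|\le C^n+C_3\int_{\Lambda_1}^{\infty}\tau^{n-\eta}e^{-ct\tau^\eta}\,d\tau$. The substitution $v=\tau^\eta$ reduces the integral to a constant times $(ct)^{1-(n+1)/\eta}\,\Gamma\!\bigl(\tfrac{n+1}{\eta}-1\bigr)$, and $\Gamma\!\bigl(\tfrac{n+1}{\eta}-1\bigr)\le C_4^{\,n}(n!)^{1/\eta}$ up to a factor polynomial in $n$ by Stirling; since $(ct)^{1-(n+1)/\eta}\le c_K^{\,n}$ uniformly for $t$ in a compact $K\subset(0,\infty)$, this gives $\|S^{(n)}(t)\|\le \tilde C\,(C_4c_K)^n(n!)^{1/\eta}$ up to a polynomial factor in $n$, uniformly on $K$. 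Finally, for each $s>1/\eta$ and each $\mu>0$, the quantity $(C_4c_K)^n(n!)^{1/\eta}$ times any fixed power of $n$ is $\le C(\mu,s,K)\,\mu^n(n!)^s$, because $(n!)^{s-1/\eta}$ dominates every expression of the form (geometric)$\times$(polynomial) in $n$; this is exactly \eqref{DesigDef1.1}, so $S(t)$ is of Gevrey class $s$ for $t>0$ for every $s>1/\eta$. (The strict inequality is genuinely used here: for $s=1/\eta$ the last step would require $\mu\ge C_4c_K$, which is not permitted since $\mu$ is arbitrary.)

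The main obstacle I expect is the second paragraph: choosing $c$ and $\Lambda_1$ so that the curved branches $\xi=-c|\tau|^\eta$ stay inside $\rho(\mathcal B)$, joining them to a piece crossing into the right half-plane while keeping resolvent control on the connecting segments, and then showing that the horizontal ``closing'' pieces at $|\tau|=R$ in the deformation of the Bromwich line vanish as $R\to\infty$ --- it is precisely there that the boundedness of $S(t)$ (on the portion with $\mathrm{Re}\,\lambda\ge0$) and the estimate \eqref{Eq1.5Tebon2020} (on the curved portion) are used in tandem. Once the representation $S(t)=\tfrac1{2\pi i}\int_\Gamma e^{\lambda t}(\lambda I-\mathcal B)^{-1}\,d\lambda$ is established, differentiation under the integral and the Gamma-function estimate are routine, and the passage from the exponent $1/\eta$ to every $s>1/\eta$ is just bookkeeping with Stirling.
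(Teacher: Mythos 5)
The paper does not prove this theorem at all: it is imported verbatim (with the citation to Taylor's thesis, as adapted in Keyantuo--Tebou--Warma) and used as a black box, so there is no internal proof to compare against. Your argument is, in substance, the standard proof of that cited result, and it is essentially correct: the Neumann-series perturbation of \eqref{Eq1.5Tebon2020} enlarging the resolvent set to a region bounded by the curves $\mathrm{Re}\,\lambda=-c|\mathrm{Im}\,\lambda|^{\eta}$ with the bound $\|(\lambda I-\mathcal B)^{-1}\|\le C_1|\mathrm{Im}\,\lambda|^{-\eta}$ there, the deformation of the Bromwich representation (valid for $x\in D(\mathcal B^2)$, then extended by density) onto the boundary contour, differentiation under the integral, and the substitution $v=\tau^{\eta}$ giving $\Gamma\bigl(\tfrac{n+1}{\eta}-1\bigr)\le C^n(n!)^{1/\eta}$ by Stirling, whence \eqref{DesigDef1.1} for every $s>1/\eta$. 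The two delicate points you flag are indeed the only ones that need care, and your sketch treats them correctly: the closing arc must be routed through the open right half-plane because boundedness of $S(t)$ gives no resolvent control on the bounded strip $\{|\mathrm{Im}\,\lambda|<\Lambda_1,\ \mathrm{Re}\,\lambda\le 0\}$, and the horizontal segments at height $\pm R$ vanish because for $x\in D(\mathcal B^2)$ the integrand gains $O(|\lambda|^{-1})$ decay, which beats their length $O(R^{\eta})$ since $\eta<1$; writing these two steps out in full would turn your sketch into a complete proof.
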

The main result of this subsection is as follows:
\begin{theorem} \label{GevreyLaminado} The  semigroup  $S(t)=e^{\mathcal{B}_1t}$   associated to the system \eqref{Eq05A}--\eqref{Eq07A} and \eqref{Eq08A}--\eqref{Eq09A}   is of Gevrey class $s>\frac{r+1}{2r}$  for  $r=\min \{\tau,\sigma,\xi\}$ for all
 $(\tau,\sigma,\xi )\in R_{CG}:= (0, 1)^3$.
\end{theorem}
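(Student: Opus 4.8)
The plan is to derive Theorem \ref{GevreyLaminado} from the Gevrey characterization of Theorem \ref{Theorem1.2Tebon}: it suffices to prove
\[
\limsup_{|\lambda|\to\infty}|\lambda|^{\eta}\,\|(i\lambda I-\mathcal B_1)^{-1}\|_{\mathcal L(\mathcal H)}<\infty\qquad\text{with}\qquad\eta=\frac{2r}{r+1},
\]
since then $S_1(t)=e^{\mathcal B_1 t}$ is of Gevrey class $s$ for every $s>1/\eta=\tfrac{r+1}{2r}$, and $\eta\in(0,1)$ exactly because $r\in(0,1)$. By Lemma \ref{LemaExponencial} together with $0\in\rho(\mathcal B_1)$ one has $i\mathbb R\subset\rho(\mathcal B_1)$, so everything reduces to finding $\delta,C_\delta>0$ such that the solution $U=(\phi,\Phi,\psi,\Psi,\theta,\Theta)$ of the resolvent system \eqref{esp-10}--\eqref{esp-60} with datum $F$ satisfies $|\lambda|^{\eta}\|U\|_{\mathcal H}^{2}\le C_\delta\|F\|_{\mathcal H}\|U\|_{\mathcal H}$ for $|\lambda|\ge\delta$; equivalently $|\lambda|^{\eta}\|U\|_{\mathcal H}\le C_\delta\|F\|_{\mathcal H}$.

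The argument rests on two estimates for the three velocities $\Phi,\Psi,\Theta$. First, the dissipation identity \eqref{eqdissipative}--\eqref{dis-10} gives $\|A^{\tau/2}\Phi\|^{2}+\|A^{\sigma/2}\Psi\|^{2}+\|A^{\xi/2}\Theta\|^{2}\le C\|F\|_{\mathcal H}\|U\|_{\mathcal H}$, whence, by the embeddings $D(A^{\tau/2})\hookrightarrow D(A^{r/2})$ etc.\ with $r=\min\{\tau,\sigma,\xi\}$,
\[
\|A^{r/2}\Phi\|^{2}+\|A^{r/2}\Psi\|^{2}+\|A^{r/2}\Theta\|^{2}\le C\,\|F\|_{\mathcal H}\|U\|_{\mathcal H},
\]
so in particular $\|\Phi\|,\|\Psi\|,\|\Theta\|\le C(\|F\|_{\mathcal H}\|U\|_{\mathcal H})^{1/2}$; moreover estimates \eqref{Exponential002}--\eqref{Exponential003} of Lemma \ref{LemaExponencial} give $\|\phi_x+\psi\|,\ \|A^{1/2}\psi\|,\ \|A^{1/2}\theta\|\le C(\|F\|_{\mathcal H}\|U\|_{\mathcal H})^{1/2}$, hence also $\|A^{1/2}\phi\|=\|\phi_x\|\le C(\|F\|_{\mathcal H}\|U\|_{\mathcal H})^{1/2}$. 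Second, I would take the duality product of \eqref{esp-20} with $A^{-(1-r)/2}\Phi$ (and of \eqref{esp-40}, \eqref{esp-60} with $A^{-(1-r)/2}\Psi$, $A^{-(1-r)/2}\Theta$), using the self-adjointness of the powers $A^{\nu}$ and integrating by parts in the coupling terms (whose boundary contributions vanish by the Dirichlet conditions) so that each $x$-derivative lands on a negative power of $A$. In the imaginary part the damping term is real and disappears, the term $i\lambda\langle\Phi,A^{-(1-r)/2}\Phi\rangle$ contributes $\lambda\|A^{-(1-r)/4}\Phi\|^{2}$, and every remaining term is $\le C\|F\|_{\mathcal H}\|U\|_{\mathcal H}$: the elliptic and coupling contributions because each carries a factor $\|A^{\,1/2-(1-r)/2}\,\cdot\,\|=\|A^{r/2}\,\cdot\,\|$ together with a factor $\|A^{1/2}\phi\|$, $\|A^{1/2}\psi\|$, $\|A^{1/2}\theta\|$, $\|\Phi\|$, $\|\Psi\|$ or $\|\Theta\|$, all of which are controlled by the first estimate; and the datum terms because $\|A^{-(1-r)/2}V\|\le C\|V\|\le C\|U\|_{\mathcal H}$ for $V\in\{\Phi,\Psi,\Theta\}$. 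This yields
\[
|\lambda|\,\|A^{-(1-r)/4}\Phi\|^{2}+|\lambda|\,\|A^{-(1-r)/4}\Psi\|^{2}+|\lambda|\,\|A^{-(1-r)/4}\Theta\|^{2}\le C\,\|F\|_{\mathcal H}\|U\|_{\mathcal H}.
\]

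Interpolating $D(A^{0})$ between $D(A^{-(1-r)/4})$ and $D(A^{r/2})$ — the identity $0=\tfrac{2r}{r+1}\!\cdot\!\big(-\tfrac{1-r}{4}\big)+\tfrac{1-r}{r+1}\!\cdot\!\tfrac r2$ holds — gives, for each velocity $V\in\{\Phi,\Psi,\Theta\}$,
\[
\|V\|^{2}\le\big(\|A^{-(1-r)/4}V\|^{2}\big)^{\frac{2r}{r+1}}\big(\|A^{r/2}V\|^{2}\big)^{\frac{1-r}{r+1}}\le C\,\big(|\lambda|^{-1}\|F\|_{\mathcal H}\|U\|_{\mathcal H}\big)^{\frac{2r}{r+1}}\big(\|F\|_{\mathcal H}\|U\|_{\mathcal H}\big)^{\frac{1-r}{r+1}}=C\,|\lambda|^{-\eta}\|F\|_{\mathcal H}\|U\|_{\mathcal H},
\]
so $|\lambda|^{\eta}(\|\Phi\|^{2}+\|\Psi\|^{2}+\|\Theta\|^{2})\le C\|F\|_{\mathcal H}\|U\|_{\mathcal H}$. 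Finally, multiplying inequalities $(i)$ and $(ii)$ of Lemma \ref{Regularidad} by $|\lambda|^{\eta-1}\le 1$ (valid for $|\lambda|\ge 1$) bounds the ``potential'' part of $|\lambda|^{\eta}\|U\|_{\mathcal H}^{2}$, namely $|\lambda|^{\eta}\big(\kappa\|\phi_x+\psi\|^{2}+b\|A^{1/2}\psi\|^{2}+\beta\kappa\delta\|A^{1/2}\theta\|^{2}\big)$, by $C|\lambda|^{\eta}\big(\|\Phi\|^{2}+\|\Psi\|^{2}+\|\Theta\|^{2}\big)+C\|F\|_{\mathcal H}\|U\|_{\mathcal H}\le C\|F\|_{\mathcal H}\|U\|_{\mathcal H}$; since $\|U\|_{\mathcal H}^{2}$ is, up to positive constants, the sum of the kinetic and potential parts, we obtain $|\lambda|^{\eta}\|U\|_{\mathcal H}^{2}\le C\|F\|_{\mathcal H}\|U\|_{\mathcal H}$ for $|\lambda|$ large, and Theorem \ref{Theorem1.2Tebon} finishes the proof.

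The delicate point is the choice of the fractional weight $A^{-(1-r)/2}$ in the second estimate: it is precisely this exponent that makes $\|A^{\,1/2-(1-r)/2}\,V\|=\|A^{r/2}V\|$ coincide with the norm already controlled by the dissipation, so that no regularity is lost in the elliptic and coupling terms and — just as importantly — the datum produces the clean remainder $\|F\|_{\mathcal H}\|U\|_{\mathcal H}$ rather than a solitary $\|F\|_{\mathcal H}^{2}$ term (which would only yield the weaker class $s>\tfrac{r+1}{r}$), while the resulting interpolation exponent comes out exactly equal to $\eta=\tfrac{2r}{r+1}$. A more naive weight such as $A^{-1/2}$ or $A^{-\tau}$ either loses a power of $|\lambda|$ or leaves such an uncontrolled remainder. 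Beyond this, the only work is the routine verification, via Cauchy--Schwarz and Young and the estimates of Lemmas \ref{LemaExponencial}--\ref{Regularidad}, that each individual term of the three duality products closes with the bound $C\|F\|_{\mathcal H}\|U\|_{\mathcal H}$.
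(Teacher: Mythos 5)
Your proposal is correct, and it reaches the resolvent bound $|\lambda|^{2r/(r+1)}\|U\|_{\mathcal H}\le C_\delta\|F\|_{\mathcal H}$ by a genuinely different route than the paper. The paper follows the Liu--Renardy device: each velocity is split as $\Phi=\Phi_1+\Phi_2$ (and likewise $\Psi,\Theta$), with $\Phi_1$ solving the auxiliary problem $i\lambda\Phi_1+A\Phi_1=f^2$, after which $\|A^{-1/2}\Phi_2\|$ and $\|A^{\tau/2}\Phi_2\|$ are estimated separately and Lions interpolation with $0\in[-\tfrac12,\tfrac{\tau}{2}]$ gives the componentwise rates $|\lambda|^{2\tau/(1+\tau)}\|\Phi\|$, $|\lambda|^{2\sigma/(1+\sigma)}\|\Psi\|$, $|\lambda|^{2\xi/(1+\xi)}\|\Theta\|\lesssim\|F\|_{\mathcal H}$, which are then combined case-by-case according to which parameter is the minimum. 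You avoid the decomposition entirely: pairing each velocity equation with $A^{-(1-r)/2}V$, taking imaginary parts so the (real, nonnegative) damping terms drop out, and bounding the elliptic, coupling and datum terms through the dissipation \eqref{dis-10} and the bounds \eqref{Exponential002}--\eqref{Exponential003} yields $|\lambda|\,\|A^{-(1-r)/4}V\|^2\le C\|F\|_{\mathcal H}\|U\|_{\mathcal H}$ directly, and a single interpolation between $D(A^{-(1-r)/4})$ and $D(A^{r/2})$ (your exponent identity checks out, and $\tfrac{2r}{r+1}+\tfrac{1-r}{r+1}=1$ keeps the remainder in the clean form $\|F\|_{\mathcal H}\|U\|_{\mathcal H}$) gives $|\lambda|^{\eta}\|V\|^2\lesssim\|F\|_{\mathcal H}\|U\|_{\mathcal H}$ with the uniform rate $\eta=\tfrac{2r}{r+1}$; the potential terms are then absorbed exactly as in the paper, via Lemma \ref{Regularidad} weighted by $|\lambda|^{\eta-1}\le1$. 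What the paper's splitting buys is the finer per-component information (each velocity carries its own exponent $\tfrac{2\tau}{1+\tau}$, etc.), whereas your single-multiplier argument is shorter, needs only the minimum $r$, and dispenses with the auxiliary resolvent problems and the final three-case Young-inequality bookkeeping; it only delivers the rate at the minimum, but that is all the theorem asserts. Your formal manipulations ($\|\partial_x A^{-s}v\|=\|A^{1/2-s}v\|$, vanishing boundary terms) are at the same level of rigor as the identities the paper itself uses, such as $\|A^{-1/2}\psi_x\|=\|\psi\|$, so they do not constitute a gap, though writing out the verification that $A^{-(1-r)/2}\Psi\in D(A^{1/2})$ (via $\Psi\in D(A^{\sigma/2})$ and $\sigma\ge r$) would make the integrations by parts airtight.
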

\begin{proof}
From the resolvent  equation $F=(i\lambda I-\mathcal{B}_1)U$  for $\lambda\in\mathbb{R}$,  
$U=(i\lambda I-\mathcal{B}_1)^{-1}F$.  Furthermore,  show  \eqref{Eq1.5Tebon2020} of  theorem\eqref{Theorem1.2Tebon} it is enough to show,  that for $\varepsilon>0$ exists $C_\delta>0$,  such that:
$$\dfrac{|\lambda|^\frac{2r}{r+1} \|(i\lambda-\mathcal{B}_1)^{-1}F\|_\mathcal{H}}{\|F\|_\mathcal{H}}\leq C_\delta \Longleftrightarrow |\lambda |^\frac{2r}{r+1} \|U\|_\mathcal{H}\leq C_\delta \|F\|_\mathcal{H}, \qquad F\in\mathcal{H}\quad{\rm and}\quad  C_\delta>0.$$
Equivalent to
\begin{equation}\label{EstimaEquivalenteGevrey}
|\lambda |^\frac{2r}{r+1}\|U\|_\mathcal{H}^2\leq C_\delta\{\|F\|_\mathcal{H}\|U\|_\mathcal{H}\}\quad\rm{for}\quad \frac{2r}{r+1} \in(0,1).
\end{equation}
where  $r=\min\{\tau,\sigma,\xi\}$, \; for all $(\tau,\sigma,\xi)\in (0,1)^3$.

Next,  $[|\lambda|^\frac{2\tau}{1+\tau}\|\Phi\|+|\lambda|^\frac{2\sigma}{1+\sigma}\|\Psi\|+|\lambda|^\frac{2\xi}{1+\xi}\|\Theta\|]$  will be estimated.  \\
{\bf  Let's start by estimating the term $|\lambda|^\frac{2\tau}{1+\tau}\|\Phi\|$:}  It is  assume that   $|\lambda|>1$,  some ideas could be borrowed  from \cite{LiuR95}.  Set $\Phi=\Phi_1+\Phi_2$, where $\Phi_1\in D(A)$ and $\Phi_2\in D(A^0)$, with 
\begin{equation}\label{Eq110AnalyRR}
i\lambda \Phi_1+A \Phi_1=f^2, 
\hspace{3cm} i\lambda \Phi_2=\dfrac{\kappa}{\rho_1}A\phi+\dfrac{\kappa}{\rho_1}\psi_x-\dfrac{\mu_1}{\rho_1}A^\tau \Phi+A\Phi_1.
\end{equation} 
Firstly,  applying in the product duality  the first equation in \eqref{Eq110AnalyRR} by $\Phi_1$, then  by $A\Phi_1$    and recalling that the operator $A$  is
self-adjoint, resulting in 
\begin{equation}\label{Eq112AnalyRR}
|\lambda|\|\Phi_1\|+|\lambda|^\frac{1}{2}\|A^\frac{1}{2}\Phi_1\|+\|A\Phi_1\|\leq C\|F\|_\mathcal{H}.
\end{equation}
Applying the $A^{-\frac{1}{2}}$ operator on the second equation of \eqref{Eq110AnalyRR},   result in
\begin{equation*}
i\lambda A^{-\frac{1}{2}}\Phi_2= \dfrac{\kappa}{\rho_1}A^\frac{1}{2}\phi+\dfrac{\kappa}{\rho_1}A^{-\frac{1}{2}}\psi_x-\dfrac{\mu_1}{\rho_1}A^{\tau-\frac{1}{2}} \Phi+A^\frac{1}{2}\Phi_1,
\end{equation*}
 then,  as $\|A^{-\frac{1}{2}}\psi_x\|^2=\dual{-A^{-\frac{1}{2}}\psi_{xx}}{A^{-\frac{1}{2}}\psi}=\dual{A^\frac{1}{2}\psi}{A^{-\frac{1}{2}}\psi}=\|\psi\|^2$  and  $\tau-\frac{1}{2}\leq \frac{\tau}{2}$ taking into account the continuous embedding $D(A^{\theta_2}) \hookrightarrow D(A^{\theta_1}),\;\theta_2>\theta_1$,  result in 
\begin{equation}\label{Eq113AAnaly}
|\lambda|\|A^{-\frac{1}{2}} \Phi_2\|\leq C\{\|A^\frac{1}{2}\phi\|+ \|\psi\|+\|A^\frac{\tau}{2}\Phi\|\}+\|A^\frac{1}{2}\Phi_1\|,
\end{equation}
using  \eqref{EstimaEquivExp1}, \eqref{dis-10} and  estimative  \eqref{Eq112AnalyRR} ($\|A^\frac{1}{2}\Phi\|^2\leq C|\lambda|^{-1}\|F\|^2_\mathcal{H}$),  lead  to
\begin{equation*}
|\lambda|^2\|A^{-\frac{1}{2}}\Phi_2\|^2\leq C\{\|F\|_\mathcal{H}\|U\|_\mathcal{H} +|\lambda|^{-1}\|F\|^2_\mathcal{H} \}.
\end{equation*}
Equivalently 
\begin{equation}
\label{Eq113AnalyRR}
\|A^{-\frac{1}{2}}\Phi_2\|^2\leq C|\lambda|^{-\frac{4\tau+2}{\tau+1}}\{ |\lambda|^\frac{2\tau}{\tau+1}\|F\|_\mathcal{H}\|U\|_\mathcal{H}+\|F\|^2_\mathcal{H}\}\quad {\rm for}\quad 0\leq\tau\leq 1.
\end{equation}
On the  other hand, from $\Phi_2=\Phi-\Phi_1$,  \eqref{dis-10} and  as $\frac{\tau}{2}\leq\frac{1}{2}$ the inequality of \eqref{Eq112AnalyRR}, result in
\begin{eqnarray}
\nonumber
\|A^\frac{\tau}{2} \Phi_2\|^2 & \leq & C\{ \|A^\frac{\tau}{2} \Phi\|^2+\|A^\frac{\tau}{2}\Phi_1\|^2
\}\\
\label{Eq114AnalyRR}
 &\leq &  C\{\|F\|_\mathcal{H}\|U\|_\mathcal{H}+|\lambda|^{-1}\|F\|^2_\mathcal{H} \}\leq C|\lambda|^{-\frac{2\tau}{\tau+1}}\{ |\lambda|^\frac{2\tau}{\tau+1}\|F\|_\mathcal{H}\|U\|_\mathcal{H}+\|F\|^2_\mathcal{H}\}.
\end{eqnarray}
By Lions' interpolations inequality $0\in [-\frac{1}{2},\frac{\tau}{2}]$,  result in
\begin{equation}\label{Eq115AnalyRR}
 \|\Phi_2\|^2\leq C(\|A^{-\frac{1}{2}}\Phi_2\|^2)^\frac{\tau}{1+\tau}(\|A^\frac{\tau}{2}\Phi_2\|^2)^\frac{1}{1+\tau}.
\end{equation}
Then, using \eqref{Eq113AnalyRR} and \eqref{Eq114AnalyRR} in \eqref{Eq115AnalyRR}, for $ |\lambda|>1$,  result in 
\begin{equation}\label{Eq118AnalyRR}
 \|\Phi_2\|^2\leq C|\lambda|^{-\frac{4\tau}{1+\tau}}\{|\lambda|^\frac{2\tau}{\tau+1} \|F\|_\mathcal{H}\|U\|_\mathcal{H}+\|F\|^2_\mathcal{H}\}.
\end{equation}
Therefore,   as $\|\Phi\|^2\leq  \|\Phi_1\|^2+ \|\Phi_2\|^2$ from first inequality of  \eqref{Eq112AnalyRR},  \eqref{Eq118AnalyRR}, we get
\begin{equation*}
\|\Phi\|^2\leq C|\lambda|^{-2}\|F\|^2_\mathcal{H}+ C|\lambda|^{-\frac{4\tau}{1+\tau}}\{|\lambda|^\frac{2\tau}{\tau+1} \|F\|_\mathcal{H}\|U\|_\mathcal{H}+\|F\|^2_\mathcal{H}\},
\end{equation*}
 and  as for  $0\leq\tau\leq 1$ we have $|\lambda|^{-2}\leq |\lambda|^\frac{-4\tau}{1+\tau}$, result in 
\begin{equation}\label{Eq119AnalyRR}
 |\lambda|\|\Phi\|^2\leq C_\delta|\lambda|^\frac{1-3\tau}{1+\tau}\{|\lambda|^\frac{2\tau}{1+\tau}\|F\|_\mathcal{H}\|U\|_\mathcal{H}+\|F\|^2_\mathcal{H}\}\qquad\rm{for} \qquad 0\leq\tau\leq 1.
\end{equation}
{\bf On the other hand,   let's now estimate the missing term  $|\lambda|^\frac{2\sigma}{1+\sigma}\|\Psi\|$: } It is assumed that $|\lambda|>1$.  Set $\Psi=\Psi_1+\Psi_2$, where $\Psi_1\in D(A)$ and $\Psi_2\in D(A^0)$, with 
\begin{multline}\label{Eq110AnalyRRW}
i\lambda \Psi_1+A \Psi_1=f^4 \qquad {\rm and}\qquad i\lambda \Psi_2=-\dfrac{b}{\rho_2}A\psi -\dfrac{\kappa}{\rho_2}\phi_x-\dfrac{\kappa}{\rho_2}\psi-\dfrac{\beta}{\rho_2}\Theta_x-\dfrac{\mu_2}{\rho_2}A^\sigma \Psi+A\Psi_1.
\end{multline} 
Firstly,  applying in the product duality  the first equation in \eqref{Eq110AnalyRRW} by $\Psi_1$, then  by $A\Psi_1$    and recalling that the operator $A$  is
self-adjoint, resulting in 
\begin{equation}\label{Eq112AnalyRRW}
|\lambda|\|\Psi_1\|+|\lambda|^\frac{1}{2}\|A^\frac{1}{2}\Psi_1\|+\|A\Psi_1\|\leq C\|F\|_\mathcal{H}. 
\end{equation}
As follows from the second equation in \eqref{Eq110AnalyRRW}  that
\begin{equation*}
i\lambda A^{-\frac{1}{2}}\Psi_2= -\dfrac{b}{\rho_2}A^\frac{1}{2}\psi-\dfrac{\kappa}{\rho_2}A^{-\frac{1}{2}}\phi_x-\dfrac{\kappa}{\rho_2}A^{-\frac{1}{2}}\psi-\dfrac{\beta}{\rho_2}A^{-\frac{1}{2}}\Theta_x-\dfrac{\mu_2}{\rho_2}A^{\sigma-\frac{1}{2}}\Psi+A^\frac{1}{2}\Psi_1,
\end{equation*}
 then,  as $\|A^{-\frac{1}{2}}\phi_x\|^2=\dual{-A^{-\frac{1}{2}}\phi_{xx}}{A^{-\frac{1}{2}}\phi}=\|\phi\|^2$,  $\|A^{-\frac{1}{2}}\Theta_x\|^2=\|\Theta\|^2 $ and  $\sigma-\frac{1}{2}\leq \frac{\sigma}{2}$ taking into account the continuous embedding $D(A^{\theta_2}) \hookrightarrow D(A^{\theta_1}),\;\theta_2>\theta_1$,  lead to
\begin{equation}\label{Eq113AAnalyRRW}
|\lambda|\|A^{-\frac{1}{2}} \Psi_2\|\leq C\{ \|A^\frac{1}{2}\phi\|+\|\psi\|+\|\Theta\|+\|A^\frac{\sigma}{2}\Psi\|\}+\|A^\frac{1}{2}\Psi_1\|
\end{equation}
Using estimative \eqref{EstimaEquivExp1} and   estimative  \eqref{Eq112AnalyRRW},  for $|\lambda|>1$,  result in
\begin{equation*}
\|A^{-\frac{1}{2}}\Psi_2\|^2 \leq  C|\lambda|^{-3}\{|\lambda|\|F\|_\mathcal{H}\|U\|_\mathcal{H}+\|F\|^2_\mathcal{H}\}.
\end{equation*}
Equivalently
\begin{equation}\label{Eq113AnalyRRW}
\|A^{-\frac{1}{2}}\Psi_2\|^2 \leq C|\lambda|^{-\frac{4\sigma+2}{\sigma+1}}\{ |\lambda|^\frac{2\sigma}{\sigma+1}\|F\|_\mathcal{H}\|U\|_\mathcal{H}+\|F\|^2_\mathcal{H}\}\quad {\rm for}\quad 0\leq \sigma\leq 1.
\end{equation}
On the  other hand, from $\Psi_2=\Psi-\Psi_1$,  \eqref{dis-10} and  as $\frac{\sigma}{2}\leq\frac{1}{2}$ the  inequality  \eqref{Eq112AnalyRRW},  result in  
\begin{eqnarray}
\nonumber
\|A^\frac{\sigma}{2} \Psi_2\|^ 2& \leq & C\{ \|A^\frac{\sigma}{2} \Psi\|^2+\|A^\frac{\sigma}{2}\Psi_1\|^2\}\\
\label{Eq114AnalyRRW}
& \leq &   C\{\|F\|_\mathcal{H}\|U\|_\mathcal{H}+|\lambda|^{-1}\|F\|^2_\mathcal{H}\}
\leq |\lambda|^{-\frac{2\sigma}{\sigma+1}}\{|\lambda|^\frac{2\sigma}{\sigma+1}\|F\|_\mathcal{H}\|U\|_\mathcal{H}+\|F\|^2_\mathcal{H}\}.
\end{eqnarray}
Now,  by Lions' interpolations inequality $0\in [-\frac{1}{2},\frac{\sigma}{2}]$,  lead to
\begin{equation}\label{Eq115AnalyRRW}
 \|\Psi_2\|^2 \leq C (\|A^{-\frac{1}{2}}\Psi_2\|^2)^\frac{\sigma}{1+\sigma}(\|A^\frac{\sigma}{2}\Psi_2\|^2)^\frac{1}{1+\sigma}.
\end{equation}
Then, using \eqref{Eq113AnalyRRW} and \eqref{Eq114AnalyRRW} in \eqref{Eq115AnalyRRW}, for $ |\lambda|>1$,  result in 
\begin{equation}\label{Eq118AnalyRRW}
 \|\Psi_2\|^2 \leq C|\lambda|^{\frac{-4\sigma}{(1+\sigma)}}\{|\lambda|^\frac{2\sigma}{\sigma+1} \|F\|_\mathcal{H}\|U\|_\mathcal{H}+\|F\|_\mathcal{H}\}.
\end{equation}
Therefore,   as $\|\Psi\|^2\leq  \|\Psi_1\|^2+ \|\Psi_2\|^2$ from first inequality of  \eqref{Eq112AnalyRRW}, \eqref{Eq118AnalyRRW} and $|\lambda|^{-2}\leq|\lambda|^\frac{-4\sigma}{1+\sigma}$,  result in 
\begin{equation}\label{Eq119AnalyRRW}
|\lambda|\|\Psi\|^2\leq |\lambda|^\frac{1-3\sigma}{1+\sigma}\{|\lambda|^\frac{2\sigma}{\sigma+1}\|F\|_\mathcal{H}\|U\|_\mathcal{H}+\|F\|^2_\mathcal{H}\}\qquad\rm{for}\qquad 0\leq\sigma\leq 1.
\end{equation}
{\bf  Finally,   let's now estimate the missing term  $|\lambda|^\frac{2\xi}{1+\xi}\|\Theta\|$:}   Now we assume  $|\lambda|>1$.  Set $\Theta=\Theta_1+\Theta_2$, where $\Theta_1\in D(A)$ and $\Theta_2\in D(A^0)$, with 
\begin{equation}\label{Eq110AnalyRRT}
i\lambda \Theta_1+A \Theta_1=f^6 
\qquad{\rm and}\qquad 
 i\lambda \Theta_2=-\dfrac{\delta}{\rho_3}A\theta -\dfrac{\gamma}{\rho_3}\Psi_x-\dfrac{K}{\rho_3}A^\xi \Theta+A\Theta_1.
\end{equation} 
Firstly,  applying in the product duality  the first equation in \eqref{Eq110AnalyRRT} by $\Theta_1$, then  by $A\Theta_1$    and recalling that the operator $A$  is
self-adjoint, resulting in 
\begin{equation}\label{Eq112AnalyRRT}
|\lambda|\|\Theta_1\|+|\lambda|^\frac{1}{2}\|A^\frac{1}{2}\Theta_1\|+\|A\Theta_1\|\leq C\|F\|_\mathcal{H}. 
\end{equation}
As follows from the second equation in \eqref{Eq110AnalyRRT}  that
\begin{equation*}
i\lambda A^{-\frac{1}{2}}\Theta_2= -\dfrac{\delta}{\rho_3}A^\frac{1}{2}\theta -\dfrac{\gamma}{\rho_3}A^{-\frac{1}{2}}\Psi_x-\dfrac{K}{\rho_3}A^{\xi-\frac{1}{2}} \Theta+A^\frac{1}{2}\Theta_1,
\end{equation*}
 then,  as  $\|A^{-\frac{1}{2}}\Psi_x\|^2=\|\Psi\|^2 $ and  $-\frac{1}{2}\leq 0$,  $\xi-\frac{1}{2}\leq \frac{\xi}{2}$ taking into account the continuous embedding $D(A^{\theta_2}) \hookrightarrow D(A^{\theta_1}),\;\theta_2>\theta_1$,  result in 
\begin{equation}\label{Eq113AAnalyRRT}
|\lambda|\|A^{-\frac{1}{2}} \Theta_2\|\leq C\{ \|A^\frac{1}{2}\theta\|+\|A^\frac{\xi}{2}\Theta\|+\|\Psi\|\}+\|A^\frac{1}{2}\Theta_1\|
\end{equation}

Using estimates \eqref{EstimaEquivExp1}, \eqref{dis-10} and   estimative  \eqref{Eq112AnalyRRT},  for $|\lambda|>1$,  lead to
\begin{equation}\label{Eq113AnalyRRT}
\|A^{-\frac{1}{2}}\Theta_2\|^2\leq C|\lambda|^{-\frac{4\xi+2}{\xi+1}}\{|\lambda|^\frac{2\xi}{\xi+1}\|F\|_\mathcal{H}\|U\|_\mathcal{H}+\|F\|^2_\mathcal{H}\}.
\end{equation}
On the  other hand, from $\Theta_2=\Theta-\Theta_1$,  \eqref{dis-10} and  as $\frac{\xi}{2}\leq\frac{1}{2}$ the inequality  \eqref{Eq112AnalyRRT},   result in 
\begin{eqnarray}
\nonumber
\|A^\frac{\xi}{2} \Theta_2\|^2 & \leq & C\{ \|A^\frac{\xi}{2} \Theta\|^2+\|A^\frac{\xi}{2}\Theta_1\|^2\}\\
\label{Eq114AnalyRRT}
& \leq &   C|\lambda|^{-\frac{2\xi}{\xi+1}}\{|\lambda|^\frac{2\xi}{\xi+1}\|F\|_\mathcal{H}\|U\|_\mathcal{H}+\|F\|^2_\mathcal{H}\}.
\end{eqnarray}
Now,  by Lions' interpolations inequality $0\in [-\frac{1}{2},\frac{\xi}{2}]$,   result in 
\begin{equation}\label{Eq115AnalyRRT}
\|\Theta_2\|^2\leq C (\|A^{-\frac{1}{2}}\Theta_2\|^2)^\frac{\xi}{1+\xi}(\|A^\frac{\xi}{2}\Theta_2\|^2)^\frac{1}{1+\xi}.
\end{equation}
Then, using \eqref{Eq113AnalyRRT} and \eqref{Eq114AnalyRRT} in \eqref{Eq115AnalyRRT}, for $ |\lambda|>1$,  result in 
\begin{equation}\label{Eq118AnalyRRT}
 \|\Theta_2\|^2\leq C|\lambda|^{\frac{-4\xi}{(1+\xi)}}\{|\lambda|^\frac{2\xi}{\xi+1} \|F\|_\mathcal{H}\|U\|_\mathcal{H}+\|F\|_\mathcal{H}\}.
\end{equation}
Therefore,   as $\|\Theta\|^2\leq \|\Theta_1\|^2+ \|\Theta_2\|^2$ from first inequality of  \eqref{Eq112AnalyRRT}, estimative \eqref{Eq118AnalyRRT} and $|\lambda|^{-2}\leq |\lambda|^\frac{-4\xi}{1+\xi}$,   result in 
\begin{equation}\label{Eq119AnalyRRT} 
|\lambda|\|\Theta\|^2\leq C_\delta |\lambda|^\frac{1-3\xi}{1+\xi}\{|\lambda|^\frac{2\xi}{\xi+1}\|F\|_\mathcal{H}\|U\|_\mathcal{H}+\|F\|^2_\mathcal{H}\}\qquad \rm{for}\qquad 0\leq\xi\leq 1.
\end{equation}
Using the estimates  \eqref{Eq119AnalyRR}   and \eqref{Eq119AnalyRRW}  in the inequality item $(i)$ of Lemma \ref{Regularidad},   result in 
\begin{multline}\label{Eq020GevreyA}
\beta\gamma\kappa\|\phi_x+\psi\|^2+\beta\gamma b\|A^\frac{1}{2}\psi\|^2
 \leq
C_\delta\big\{ |\lambda|^\frac{-2\tau}{1+\tau}+|\lambda|^\frac{-2\sigma}{1+\sigma}+|\lambda|^{-1}\big\}\|F\|_\mathcal{H}\|U\|_\mathcal{H}\\
+\big\{|\lambda|^{-\frac{4\tau}{\tau+1}}+|\lambda|^{-\frac{4\sigma}{\sigma+1}}\big\}\|F\|^2_\mathcal{H}. 
\end{multline}
As $ \tau\leq \sigma \Longrightarrow -\frac{\tau}{\tau+1}\geq -\frac{\sigma}{\sigma+1}$\quad  and \quad  $ \tau\geq \sigma \Longrightarrow -\frac{\tau}{\tau+1}\leq -\frac{\sigma}{\sigma+1}$, we obtain
\begin{multline}\label{Eq020Gevrey}
|\lambda|\{ \beta\gamma\kappa\|\phi_x+\psi\|^2+\beta\gamma b\|A^\frac{1}{2}\psi\|^2
  \}\\
  \leq C_\delta\left\{ \begin{array}{ccc}
  |\lambda|^\frac{1-3\tau}{\tau+1}\{|\lambda|^\frac{2\tau}{\tau+1} \|F\|_\mathcal{H}\|U\|_\mathcal{H}+\|F\|^2_\mathcal{H}\} & \text{for} & \tau\leq \sigma\\\\
 |\lambda|^\frac{1-3\sigma}{\sigma+1}\{ |\lambda|^\frac{2\sigma}{\sigma+1}\|F\|_\mathcal{H}\|U\|_\mathcal{H}+\|F\|^2_\mathcal{H} \}   &\text{for}  & \sigma\leq \tau
\end{array}   \right.
\end{multline}
Furthermore, using estimative \eqref{Eq119AnalyRRT} in  estimate \eqref{EqPrincipalLemaA} (item $(ii)$ of the Lemma \ref{Regularidad}) and as $\frac{1-3\xi}{\xi+1}+\frac{2\xi}{\xi+1}=\frac{1-\xi}{\xi+1}>0$, we have
\begin{multline}
\label{Eq021Gevrey}
\beta\kappa\delta |\lambda| \|A^\frac{1}{2}\theta\|^2 \leq \beta\kappa\rho_3|\lambda| \|\Theta\|^2+C_\delta|\|F\|_\mathcal{H}\|U\|_\mathcal{H}
\leq C_\delta|\lambda|^\frac{1-3\xi}{1+\xi}\{|\lambda|^\frac{2\xi}{1+\xi}\|F\|_\mathcal{H}\|U\|_\mathcal{H}+\|F\|^2_\mathcal{H}\}. 
\end{multline}

Finally summing the estimates \eqref{Eq119AnalyRR},\eqref{Eq119AnalyRRW}, \eqref{Eq119AnalyRRT},\eqref{Eq020Gevrey} and \eqref{Eq021Gevrey},  we have
\begin{equation*}
|\lambda|\|U\|^2_\mathcal{H}\leq C_\delta\left\{  \begin{array}{ccc}
 |\lambda|^\frac{1-3\tau}{1+\tau}\{ |\lambda|^\frac{2\tau}{\tau+1}\|F\|_\mathcal{H}\|U\|_\mathcal{H}+\|F\|^2_\mathcal{H}\} &\text{for} & \tau\leq \sigma\quad{\rm and}\quad \tau\leq \xi, 
 \\\\
  |\lambda|^\frac{1-3\sigma}{1+\sigma}\{ |\lambda|^\frac{2\sigma}{\sigma+1}\|F\|_\mathcal{H}\|U\|_\mathcal{H}+\|F\|^2_\mathcal{H}\} &\text{for} & \sigma \leq \tau\quad{\rm and}\quad \sigma\leq \xi,\\\\
   |\lambda|^\frac{1-3\xi}{1+\xi}\{ |\lambda|^\frac{2\xi}{\xi+1}\|F\|_\mathcal{H}\|U\|_\mathcal{H}+\|F\|^2_\mathcal{H}\} &\text{for} & \xi\leq \sigma\quad{\rm and}\quad \xi\leq \tau.
\end{array}\right.
\end{equation*} for $(\tau,\sigma,\xi)\in [0,1]^3$, equivalently
\begin{equation*}
\|U\|^2_\mathcal{H}\leq C_\delta\left\{  \begin{array}{ccc}
 |\lambda|^\frac{-4\tau}{1+\tau}\{ |\lambda|^\frac{2\tau}{\tau+1}\|F\|_\mathcal{H}\|U\|_\mathcal{H}+\|F\|^2_\mathcal{H}\} &\text{for} & \tau\leq \sigma\quad{\rm and}\quad \tau\leq \xi, 
 \\\\
  |\lambda|^\frac{-4\sigma}{1+\sigma}\{ |\lambda|^\frac{2\sigma}{\sigma+1}\|F\|_\mathcal{H}\|U\|_\mathcal{H}+\|F\|^2_\mathcal{H}\} &\text{for} & \sigma \leq \tau\quad{\rm and}\quad \sigma\leq \xi,\\\\
   |\lambda|^\frac{-4\xi}{1+\xi}\{ |\lambda|^\frac{2\xi}{\xi+1}\|F\|_\mathcal{H}\|U\|_\mathcal{H}+\|F\|^2_\mathcal{H}\} &\text{for} & \xi\leq \sigma\quad{\rm and}\quad \xi\leq \tau.
\end{array}\right.
\end{equation*}
Finally, applying Young inequality, we have 
\begin{equation*}
\left\{  \begin{array}{ccc}
 |\lambda|^\frac{2\tau}{\tau+1}\|U\|_\mathcal{H}\leq C_\delta|\|F\|_\mathcal{H} &\text{for} & \tau\leq \sigma\quad{\rm and}\quad \tau\leq \xi, 
 \\\\
  |\lambda|^\frac{2\sigma}{1+\sigma}\|U\|_\mathcal{H}\leq C_\delta\|F\|_\mathcal{H} &\text{for} & \sigma \leq \tau\quad{\rm and}\quad \sigma\leq \xi,\\\\
   |\lambda|^\frac{2\xi}{1+\xi}\|U\|_\mathcal{H}\leq C_\delta \|F\|_\mathcal{H} & \text{for} & \xi\leq \sigma\quad{\rm and}\quad \xi\leq \tau.
\end{array}\right.
\end{equation*}
Therefore,  proof of this theorem is finished.
\end{proof}

\subsubsection{Analyticity of $S(t)=e^{\mathcal{B}_1t}$  for $(\tau,\sigma,\xi)\in \big[\frac{1}{2},  1\big]^3$} \label{3.1.2}
Next, we announce the Liu-Zheng Book Theorem that will be applied in the proof of the analyticity of $S(t)=e^{\mathcal{B}_it}$ for $i=1, 2$ in their respective regions.
\begin{theorem}[see \cite{LiuZ}]\label{LiuZAnalyticity}
	Let $S(t)=e^{\mathcal{B}t}$ be $C_0$-semigroup of contractions  on a Hilbert space $ \mathcal{H}$.  Suppose that
	\begin{equation}\label{EixoImaginary}
	\rho(\mathcal{B})\supseteq\{ i\lambda/ \lambda\in \R \} 	\equiv i\R
	\end{equation}
	 Then $S(t)$ is analytic if and only if
	\begin{equation}\label{Analiticity}
	 \limsup\limits_{|\lambda|\to
		\infty}
	\|\lambda(i\lambda I-\mathcal{B})^{-1}\|_{\mathcal{L}( \mathcal{H})}<\infty
	\end{equation}
	holds.
\end{theorem}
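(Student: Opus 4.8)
The statement is the classical resolvent characterisation of analyticity due to Liu and Zheng, so the plan is to reduce everything to the sector description of analytic semigroups: a bounded $C_0$-semigroup $S(t)=e^{\mathcal{B}t}$ is analytic precisely when, for some $\delta\in(0,\pi/2)$, the sector $\Sigma_\delta:=\{z\neq 0:|\arg z|<\tfrac{\pi}{2}+\delta\}$ lies in $\rho(\mathcal{B})$ and $\sup_{z\in\Sigma_\delta}\|z(zI-\mathcal{B})^{-1}\|_{\mathcal{L}(\mathcal{H})}<\infty$. Since $S(t)$ here is a contraction semigroup it is in particular bounded, and $0\in\rho(\mathcal{B})$ by \eqref{EixoImaginary}, so this characterisation applies directly. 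With it in hand the ``only if'' direction is immediate: from analyticity one gets the sector estimate, and restricting to $z=i\lambda$, $\lambda\in\R$, yields $\|\lambda(i\lambda I-\mathcal{B})^{-1}\|\le C$, which is exactly \eqref{Analiticity}. Hence essentially all the work lies in the converse.

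For the ``if'' direction I would proceed as follows. Assume \eqref{Analiticity}: there are $M>0$ and $\lambda_0\ge 1$ with $\|(i\lambda I-\mathcal{B})^{-1}\|\le M/|\lambda|$ for $|\lambda|\ge\lambda_0$, and, using $i\R\subset\rho(\mathcal{B})$ together with continuity of $\lambda\mapsto\|(i\lambda I-\mathcal{B})^{-1}\|$, a bound $\|(i\lambda I-\mathcal{B})^{-1}\|\le M_0$ on $|\lambda|\le\lambda_0$. For $z=\alpha+i\beta$ use the factorisation
\[
zI-\mathcal{B}=(i\beta I-\mathcal{B})\bigl(I+\alpha(i\beta I-\mathcal{B})^{-1}\bigr),
\]
and invert the right-hand factor by a Neumann series whenever $|\alpha|\,\|(i\beta I-\mathcal{B})^{-1}\|<\tfrac12$. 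For $|\beta|\ge\lambda_0$ this holds on the conical region $|\alpha|<|\beta|/(2M)$, where $z\in\rho(\mathcal{B})$ and $\|(zI-\mathcal{B})^{-1}\|\le 2M/|\beta|$; since $|z|\le(1+\tfrac{1}{2M})|\beta|$ there, this produces a uniform bound on $\|z(zI-\mathcal{B})^{-1}\|$. For $|\beta|\le\lambda_0$ the same step with $M_0$ in place of $M/|\beta|$ extends the resolvent to $|\alpha|<1/(2M_0)$ with $\|z(zI-\mathcal{B})^{-1}\|$ bounded (here $|z|$ is bounded). And for $\mathrm{Re}\,z>0$ contractivity already gives $\|(zI-\mathcal{B})^{-1}\|\le 1/\mathrm{Re}\,z$. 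Patching the three regions, $\rho(\mathcal{B})$ contains a sector $\Sigma_\delta$ with $\delta=\arctan(1/(2M))$ (shrinking $\delta$ near the origin if necessary), and across $\Sigma_\delta$ the estimate $\sup_z\|z(zI-\mathcal{B})^{-1}\|<\infty$ holds; the one point to watch is that the slice of the right half-plane close to $i\R$, where the contraction bound $|z|/\mathrm{Re}\,z$ is useless, is already covered by the Neumann-series regions. The sector characterisation then delivers analyticity.

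I expect the genuinely delicate part to be the bookkeeping in the converse: verifying that the union of the right half-plane, a small disc about the origin, and the conical region $|\mathrm{Re}\,z|<|\mathrm{Im}\,z|/(2M)$ really does contain a full sector of half-angle strictly larger than $\pi/2$ about the positive real axis, and that a single uniform resolvent bound survives on the overlaps. Everything else — the Neumann-series estimates and the arithmetic comparing $|z|$ with $|\mathrm{Im}\,z|$ — is routine. The sector characterisation of analytic semigroups itself (proved via the Dunford integral $S(t)=\frac{1}{2\pi i}\int_\Gamma e^{zt}(zI-\mathcal{B})^{-1}\,dz$ along the boundary $\Gamma$ of $\Sigma_\delta$, the resolvent bound guaranteeing convergence of the integral and of its $t$-derivatives) is the ingredient I would cite from \cite{LiuZ} rather than reprove.
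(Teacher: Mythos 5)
The paper never proves this statement: it is imported verbatim as Theorem 1.3.3 of the Liu--Zheng book, so there is no internal argument to compare against, and your sketch has to be judged against the standard proof in that reference --- which it essentially reproduces. The substantive ``if'' direction is exactly the classical argument: contractivity gives, via Hille--Yosida, $\|(zI-\mathcal{B})^{-1}\|\le 1/\mathrm{Re}\,z$ on the open right half-plane; the hypothesis \eqref{Analiticity} plus a Neumann series centred at points of $i\R$ extends the resolvent, with a uniform bound on $\|z(zI-\mathcal{B})^{-1}\|$, to a cone $|\mathrm{Re}\,z|<|\mathrm{Im}\,z|/(2M)$ for $|\mathrm{Im}\,z|$ large and to a strip around the remaining compact piece of the axis (here \eqref{EixoImaginary} and continuity of the resolvent are what give the constant $M_0$); and the union of these regions does contain a sector of half-angle $\pi/2+\delta$ once $\delta$ is shrunk so that the sector's horizontal extent below $|\mathrm{Im}\,z|=\lambda_0$ stays inside the strip of width $1/(2M_0)$, so the bookkeeping you flag does close. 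The only caveat concerns your ``only if'' direction: the characterization you invoke is the one for \emph{bounded analytic} semigroups, i.e.\ semigroups analytic in a sector and uniformly bounded on closed subsectors, whereas analyticity in the bare sense (analytic extension with strong continuity at the origin in subsectors) plus contractivity on $(0,\infty)$ does not hand you that sectorial bound for free. One needs a short additional step --- for instance, uniform boundedness of $\|S(z)\|$ near $z=0$ on closed subsectors via the uniform boundedness principle, then Cauchy's formula to get $\|\mathcal{B}S(t)\|\le C/t$ for small $t$, and a Laplace-transform estimate on vertical lines, passed to the axis using \eqref{EixoImaginary}, to reach \eqref{Analiticity} --- or one simply builds boundedness on subsectors into the definition of analyticity, as the cited source effectively does. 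Since the paper only ever uses the ``if'' direction (verifying \eqref{Analiticity} to conclude analyticity of $S_1$ and $S_2$), this is a presentational imprecision rather than a gap that affects anything downstream.
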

Before proving the main result of this section,  the following two lemmas will be proved.
\begin{lemma}\label{EixoImaginary01}
The $C_0-$semigroup  of contractions $S(t)=e^{\mathcal{B}_1t}$ satisfies the condition 
 \begin{equation}\label{EixoImaginary01A}
	\rho(\mathcal{B}_1)\supseteq\{ i\lambda/ \lambda\in \R \} 	\equiv i\R.
	\end{equation}
\end{lemma}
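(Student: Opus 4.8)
The plan is to prove $i\mathbb{R}\subseteq\rho(\mathcal{B}_1)$ by a standard two-part argument: since we already know $0\in\rho(\mathcal{B}_1)$ and $\mathcal{B}_1$ generates a $C_0$-semigroup of contractions, it suffices to show that $i\lambda I-\mathcal{B}_1$ is invertible for every $\lambda\in\mathbb{R}\setminus\{0\}$. I would argue by contradiction using the fact that $\mathcal{B}_1^{-1}$ is compact (the operator $A$ has compact resolvent, hence so does $\mathcal{B}_1$), so the spectrum of $\mathcal{B}_1$ consists only of eigenvalues. Thus it is enough to exclude purely imaginary eigenvalues: suppose $i\lambda$, $\lambda\neq 0$, is an eigenvalue with eigenvector $U=(\phi,\Phi,\psi,\Psi,\theta,\Theta)\in\mathcal{D}(\mathcal{B}_1)$, $U\neq 0$.

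The key step is to exploit dissipativity. From $\mathrm{Re}\,\langle\mathcal{B}_1 U,U\rangle = -\beta\gamma\mu_1\|A^{\tau/2}\Phi\|^2-\kappa\gamma\mu_2\|A^{\sigma/2}\Psi\|^2-\tfrac{\beta\kappa K^2}{\rho_3}\|A^{\xi/2}\Theta\|^2$ (equation \eqref{eqdissipative}) and $\mathcal{B}_1 U=i\lambda U$, taking real parts gives $\|A^{\tau/2}\Phi\|=\|A^{\sigma/2}\Psi\|=\|A^{\xi/2}\Theta\|=0$, and since $A^{\tau/2},A^{\sigma/2},A^{\xi/2}$ are injective (here I use $\tau,\sigma,\xi>0$, or else that $A^0=I$), this forces $\Phi=\Psi=\Theta=0$. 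Then from the resolvent-type equations \eqref{esp-10}, \eqref{esp-30}, \eqref{esp-50} (with $F=0$) and $\lambda\neq 0$ we get $\phi=\psi=\theta=0$ as well. Next I would feed $\Phi=\Psi=\Theta=0$ and $\phi=\psi=\theta=0$ back into the remaining equations \eqref{esp-20}, \eqref{esp-40}, \eqref{esp-60} to confirm consistency (they are trivially satisfied), so $U=0$, contradicting $U\neq 0$. Hence no purely imaginary eigenvalue exists, and combined with $0\in\rho(\mathcal{B}_1)$ we conclude $i\mathbb{R}\subseteq\rho(\mathcal{B}_1)$.

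The one genuine subtlety — the main obstacle — is justifying that it suffices to rule out eigenvalues, i.e.\ establishing that $i\lambda I-\mathcal{B}_1$ is surjective once it is injective. The cleanest route is compactness of $\mathcal{B}_1^{-1}$: because $0\in\rho(\mathcal{B}_1)$, one writes $i\lambda I-\mathcal{B}_1 = -(I - i\lambda\mathcal{B}_1^{-1})\mathcal{B}_1$, and since $\mathcal{B}_1^{-1}$ is compact (inherited from the compact embeddings $D(A^{\nu_1})\hookrightarrow D(A^{\nu_2})$), the Fredholm alternative applies to $I-i\lambda\mathcal{B}_1^{-1}$: injectivity implies bijectivity. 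One must check the compactness of $\mathcal{B}_1^{-1}$ carefully, noting that the solution map of the elliptic system \eqref{Eliptico001} gains regularity (lands in $D(A)\times D(A^{1/2})\times\cdots$) and that these spaces embed compactly into $\mathcal{H}$. Alternatively, if one prefers to avoid compactness, a direct approach would combine the a priori bound of Lemma~\ref{LemaExponencial} away from $\lambda=0$ with a separate argument near $\lambda=0$ using continuity of the resolvent, but the Fredholm argument is shorter and is the one I would write out.
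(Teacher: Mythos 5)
The eigenvalue-exclusion half of your argument is fine (dissipativity forces $\Phi=\Psi=\Theta=0$, then \eqref{esp-10}, \eqref{esp-30}, \eqref{esp-50} with $\lambda\neq0$ give $\phi=\psi=\theta=0$), but the step you yourself flag as the main obstacle — reducing to eigenvalues via compactness of $\mathcal{B}_1^{-1}$ — is a genuine gap. The compactness is not ``inherited from the compact embeddings'' in the boundary cases of the parameters, and those cases are exactly where the lemma is used: the analyticity region $[\tfrac12,1]^3$ and the remark on exponential stability for $[0,1]^3$ both include $\tau=1$ (or $\sigma=1$, $\xi=1$), i.e.\ Kelvin--Voigt type damping, for which resolvents of this kind are classically non-compact. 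Concretely, take $\tau=1$ and $F_n=(f^1_n,0,0,0,0,0)$ with $f^1_n$ bounded in $D(A^{1/2})$ but with $f^1_{n,x}$ not precompact in $L^2$. Solving $\mathcal{B}_1U_n=F_n$ gives $\Phi_n=f^1_n$ and, integrating the second equation, $\kappa(\phi_{n,x}+\psi_n)=-\mu_1 f^1_{n,x}+c_n$, so the component $\phi_{n,x}+\psi_n$ of the energy norm reproduces $f^1_{n,x}$ up to smoother terms and constants; hence $\{\mathcal{B}_1^{-1}F_n\}$ has no convergent subsequence in $\mathcal{H}$ and $\mathcal{B}_1^{-1}$ is not compact (this computation also shows that for $\tau=1$ the stationary solution has $\phi$ only in $D(A^{1/2})$, so the ``solution map gains regularity and lands in $D(A)\times\cdots$'' claim fails precisely there). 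So the Fredholm-alternative route proves the lemma at best for $\tau,\sigma,\xi<1$, not in the generality in which it is invoked.

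The paper avoids compactness altogether, and its argument is essentially the ``alternative'' you mention only in passing: assume $i\mathbb{R}\not\subset\rho(\mathcal{B}_1)$, use $0\in\rho(\mathcal{B}_1)$ and openness of the resolvent set to produce a boundary spectral point $i\lambda_0\neq0$ together with approximate eigenvectors, i.e.\ $\|U_n\|_{\mathcal{H}}=1$, $\lambda_n\to\lambda_0$ and $F_n=(i\lambda_n-\mathcal{B}_1)U_n\to0$; then the estimates already proved in Lemma \ref{LemaExponencial} — the dissipation identity \eqref{dis-10} together with \eqref{Exponential002} and \eqref{Exponential003}, which hold for all $(\tau,\sigma,\xi)\in[0,1]^3$ with constants independent of $\lambda$ — give $\|U_n\|_{\mathcal{H}}^2\leq C\|F_n\|_{\mathcal{H}}\|U_n\|_{\mathcal{H}}\to0$, contradicting $\|U_n\|_{\mathcal{H}}=1$. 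If you rewrite your proof along these lines (or, equivalently, show directly that no sequence of approximate eigenvalues on $i\mathbb{R}$ can exist), you cover the whole parameter cube without any compactness assumption; as written, your proof does not.
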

\begin{proof}
Let's prove $i\R\subset\rho( \mathcal{B}_1)$  by contradiction,  it is  supposed  that $i\R\not\subset \rho( \mathcal{B}_1)$. As $0\in\rho( \mathcal{B}_1)$ and  $\rho( \mathcal{B}_1)$ is open,  the highest positive number $\lambda_0$  is considered  such that the interval  $(-i\lambda_0,i\lambda_0)\subset\rho( \mathcal{B}_1)$,  then $i\lambda_0$ or $-i\lambda_0$ is an element of the spectrum $\sigma( \mathcal{B}_1)$.  It is supposed that $i\lambda_0\in \sigma( \mathcal{B}_1)$ (if $-i\lambda_0\in \sigma( \mathcal{B}_1)$ the proceeding is similar). Then, for $0<\nu<\lambda_0$ there exist a sequence of real numbers $(\lambda_n)$, with $0<\nu\leq\lambda_n<\lambda_0$, $\lambda_n\to \lambda_0$, and a vector sequence  $U_n=(\phi_n,\Phi_n,\psi_n,\Psi_n, \theta_n,\Theta_n)\in \mathcal{D}( \mathcal{B}_1)$ with  unitary norms, such that
\begin{eqnarray*}
\|(i\lambda_n- \mathcal{B}_1) U_n\|_\mathcal{H}=\|F_n\|_\mathcal{H}\to 0,
\end{eqnarray*}
as $n\to \infty$. From \eqref{Exponential002} for $0\leq\tau\leq 1$ and $0\leq\sigma\leq 1$,  result in  
\begin{eqnarray*}
\beta\kappa\gamma\|\phi_{xn}+\psi_n\|^2+b\beta\gamma\|A^\frac{1}{2}\psi_n\|^2&\leq& C_\delta\|F_n\|_\mathcal{H}\|U_n\|_\mathcal{H}.
\end{eqnarray*}
From \eqref{Exponential003},   for $0\leq\xi\leq 1$,  result in 
\begin{equation}\label{Exponential003n}
\beta\kappa\delta\|A^\frac{1}{2}\theta_n\|^2\leq C_\delta\|F_n\|_\mathcal{H}\|U_n\|_\mathcal{H}.
\end{equation}
In addition to the estimative \eqref{dis-10},  for $0\leq\tau\leq 1$ and $0\leq\sigma\leq 1$,  result in  
\begin{eqnarray*}
\beta\gamma\mu_1\|A^\frac{\tau}{2}\Phi_n\|^2+\kappa\gamma\mu_2\|A^\frac{\sigma}{2}\Psi_n\|^2+\dfrac{\beta\kappa K}{\rho_3}\|A^\frac{\xi}{2}\Theta_n\|^2\leq C_\delta \|F_n\|_\mathcal{H}\|U_n\|_\mathcal{H}.
\end{eqnarray*}
Consequently,   $\|U_n\|_\mathcal{H}^2 \to 0.$
Therefore,  lead to  $\|U_n\|_\mathcal{H}\to  0$ but this is absurd,  since $\|U_n\|_\mathcal{H}=1$ for all $n\in\N$. Thus, $i\R\subset \rho(\mathcal{B}_1)$. This completes the proof.
\end{proof}
\begin{lemma}\label{Lema001Analiticity}
Let $\delta> 0$. There exists a constant $C_\delta > 0$ such that the solutions of \eqref{Eq01B2}--\eqref{Eq03B2} and  \eqref{Eq08A}--\eqref{Eq09A}
for $|\lambda|\geq  \delta$  satisfy the inequality
\begin{eqnarray}\label{Eq004Lema02A}
(i)\quad |\lambda|\|\Phi\|^2\leq  C_\delta\|F\|_\mathcal{H}\|U\|_\mathcal{H}\qquad{\rm for}\qquad \frac{1}{2}\leq\tau\leq 1.\\
\label{Eq005Lema02A}
(ii)\quad |\lambda|\|\Psi\|^2\leq  C_\delta\|F\|_\mathcal{H}\|U\|_\mathcal{H}\qquad{\rm for}\qquad \frac{1}{2}\leq\sigma\leq 1.\\
\label{Eq006Lema02A}
(iii)\quad |\lambda|\|\Theta\|^2\leq C_\delta\|F\|_\mathcal{H}\|U\|_\mathcal{H}\qquad{\rm for}\qquad  \frac{1}{2}\leq\xi\leq 1.
\end{eqnarray}
\end{lemma}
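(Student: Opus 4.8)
The strategy is to exploit the three resolvent equations \eqref{esp-20}, \eqref{esp-40}, \eqref{esp-60} directly, multiplying each by the appropriate power of $A$ applied to the velocity term, and then invoking the dissipation estimate \eqref{dis-10} to absorb the ``bad'' terms. Since we now assume $\frac12\le\tau,\sigma,\xi\le1$, the crucial gain is that $A^{\tau/2}$, $A^{\sigma/2}$, $A^{\xi/2}$ dominate $A^{1/4}$ (hence the $H^{1/2}$-type norm of the velocities is controlled by the dissipation), which is exactly the extra room that was missing in Lemma \ref{Regularidad}. Throughout, I would keep using the abbreviation that $C_\delta$ changes from line to line and that $|\lambda|>\delta$ (so also $|\lambda|>1$ after enlarging $\delta$ if needed).

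For item $(i)$, I would take the duality product of \eqref{esp-20} with $\beta\gamma\rho_1 A^\tau\phi$ (not merely $\phi$), using self-adjointness of $A^\nu$. The term $i\lambda\langle A^\tau\phi,\Phi\rangle$ is rewritten, via \eqref{esp-10}, as $\langle A^{\tau/2}\Phi,A^{\tau/2}\phi\rangle + \langle A^{\tau/2}f^1,A^{\tau/2}\phi\rangle$, so $|\lambda|\|A^{\tau/2}\phi\|$-type quantities never appear alone; the elastic term $\kappa\langle(\phi_x+\psi)_x,A^\tau\phi\rangle$ is integrated by parts and, because $\tau\le1$, is bounded using \eqref{Exponential002} together with $\|A^{\tau/2}\phi\|\le C\|A^{1/2}\phi\|$; and the damping term produces exactly $-\tfrac{\mu_1}{\rho_1}\|A^{\tau/2}\Phi\|^2\cdot(\text{const})$, which is $\le C_\delta\|F\|_\mathcal H\|U\|_\mathcal H$ by \eqref{dis-10}. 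The left-hand side of the identity, after taking real parts, is $\|\Phi\|^2$ up to a constant only if $\tau=0$; for $\tau>0$ one instead lands on $\|A^{\tau/2}\Phi\|^2$. Hence the cleanest route is: multiply \eqref{esp-20} by $\beta\gamma\rho_1\lambda\,A^{2\tau-1}\phi$ — or, more transparently, first derive $|\lambda|\|A^{(2\tau-1)/2}\Phi\|^2\le C_\delta\|F\|\|U\|$ and then use the interpolation $\|\Phi\|^2\le \|A^{(2\tau-1)/2}\Phi\|^{2\theta}\|A^{\tau/2}\Phi\|^{2(1-\theta)}$ valid since $\tfrac{2\tau-1}{2}\le 0\le\tfrac{\tau}{2}$ when $\tau\ge\tfrac12$, with the second factor controlled by \eqref{dis-10}; choosing the interpolation parameter so the $|\lambda|$-powers match gives $|\lambda|\|\Phi\|^2\le C_\delta\|F\|_\mathcal H\|U\|_\mathcal H$. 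Items $(ii)$ and $(iii)$ are handled identically, multiplying \eqref{esp-40} by a suitable power of $A$ times $\psi$ and \eqref{esp-60} by a suitable power of $A$ times $\theta$; the only new lower-order terms are the couplings $\beta\langle\Theta_x,\cdot\rangle$, $\gamma\langle\Psi_x,\cdot\rangle$, which after integration by parts are absorbed by Young's inequality into $\varepsilon\|A^{1/2}\psi\|^2+\varepsilon\|A^{1/2}\theta\|^2$ (already bounded by \eqref{Exponential002}, \eqref{Exponential003}) plus $C_\varepsilon$ times the dissipation.

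The main obstacle is bookkeeping the $|\lambda|$-powers so that, after interpolation, one obtains exactly the factor $|\lambda|^{1}$ on the left and only $\|F\|_\mathcal H\|U\|_\mathcal H$ (no surviving positive power of $|\lambda|$) on the right; this forces the specific choice of the exponent $2\tau-1$ in the multiplier and uses $\tau\ge\tfrac12$ in an essential way (for $\tau<\tfrac12$ the exponent $\tfrac{2\tau-1}{2}$ would be below $-\tfrac12$ and the term $A^{\tau-1/2}\Phi$ appearing when we apply $A^{-1/2}$ to the split equation would no longer be controlled, which is precisely why the analyticity region is $[\frac12,1]^3$). A secondary technical point is that the coupling term in \eqref{esp-40}, namely $\tfrac{\beta}{\rho_2}\Theta_x$, when paired with $A^{2\sigma-1}\psi$ and integrated by parts, yields $\langle\Theta, A^{2\sigma-1}\psi_x\rangle$; since $2\sigma-1\le1$ this is $\le C\|\Theta\|\,\|A^{1/2}\psi\|$ after one embedding, hence harmless — but one must check the analogous estimate for the $\gamma\Psi_x$ coupling in \eqref{esp-60}, where $\langle\Psi,A^{2\xi-1}\theta_x\rangle\le C\|\Psi\|\,\|A^{1/2}\theta\|$ again by $2\xi-1\le1$, and $\|\Psi\|$ itself is bounded by the already-established item $(ii)$ (which is why the three items should be proved in the order $(i),(ii),(iii)$, feeding each into the next). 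Collecting the three estimates completes the lemma.
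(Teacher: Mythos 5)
Your overall framing (use the dissipation \eqref{dis-10}, exploit $\tau,\sigma,\xi\ge\frac12$, and prove $(ii)$ before $(iii)$ so the coupling $\gamma\Psi_x$ can be absorbed) is in the spirit of the paper, but the central step of your argument for item $(i)$ does not go through. You propose displacement-based multipliers ($A^\tau\phi$, then $\lambda A^{2\tau-1}\phi$) followed by interpolation, and both halves of that plan fail. First, the interpolation claim is wrong: for $\tau\ge\frac12$ one has $\frac{2\tau-1}{2}\ge 0$, not $\le 0$, so $0$ does not lie between $\frac{2\tau-1}{2}$ and $\frac{\tau}{2}$ unless $\tau=\frac12$. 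If you meant the exponent $\frac{1-2\tau}{2}\le 0$, then the interpolation parameter is not free but forced, $\theta=\frac{\tau}{3\tau-1}$, and combining $|\lambda|\,\|A^{(1-2\tau)/2}\Phi\|^2\lesssim\|F\|_\mathcal H\|U\|_\mathcal H$ with $\|A^{\tau/2}\Phi\|^2\lesssim\|F\|_\mathcal H\|U\|_\mathcal H$ yields only $|\lambda|^{\tau/(3\tau-1)}\|\Phi\|^2\lesssim\|F\|_\mathcal H\|U\|_\mathcal H$, strictly weaker than \eqref{Eq004Lema02A} for every $\tau>\frac12$ (at $\tau=1$ you get only $|\lambda|^{1/2}$); you cannot ``choose the interpolation parameter so the $|\lambda|$-powers match''. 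Second, if the exponent $\frac{2\tau-1}{2}\ge0$ is meant literally, the preliminary bound $|\lambda|\|A^{(2\tau-1)/2}\Phi\|^2\lesssim\|F\|_\mathcal H\|U\|_\mathcal H$ would indeed imply item $(i)$, but your sketch of it fails: pairing \eqref{esp-20} with $\lambda A^{2\tau-1}\phi$, the elastic part contributes the real term $\kappa\lambda\|A^{\tau}\phi\|^2$, which survives taking real parts and is controlled neither by the energy (which only sees $A^{1/2}\phi$) nor by the dissipation, and the substitution $i\lambda\phi=\Phi+f^1$ also produces $\lambda\dual{A^{\tau-1/2}\Phi}{A^{\tau-1/2}f^1}$ with an unabsorbed factor $\lambda$ (besides regularity problems with $A^{2\tau-1}f^1$ for $f^1\in D(A^{1/2})$ when $\tau>\frac34$).

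The paper avoids all of this by taking velocity-based multipliers matched to the damping: item $(i)$ follows from pairing \eqref{esp-20} with $\frac{\rho_1}{\mu_1}\lambda A^{-\tau}\Phi$, so the damping term yields exactly $\lambda\|\Phi\|^2$, the kinetic term becomes the purely imaginary $-i\frac{\rho_1}{\mu_1}\lambda^2\|A^{-\frac{\tau}{2}}\Phi\|^2$, and, after using \eqref{esp-10}, the stiffness term is purely imaginary up to data terms and the coupling $\dual{\Psi}{A^{-\tau}\Phi_x}$, which is exactly where $\tau\ge\frac12$ enters ($\|A^{-\tau}\Phi_x\|=\|A^{\frac{1-2\tau}{2}}\Phi\|\le C\|A^{\frac{\tau}{2}}\Phi\|$, controlled by \eqref{dis-10}); items $(ii)$ and $(iii)$ use $\frac{\rho_2}{\mu_2}\lambda A^{-\sigma}\Psi$ and $\frac{\rho_3}{K}\lambda A^{-\xi}\Theta$, with $(iii)$ invoking $(ii)$ to absorb $C_\varepsilon|\lambda|\|\Psi\|^2$. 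To salvage your plan you should replace the displacement multipliers by these velocity multipliers; as written, the proposal does not prove the lemma.
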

\begin{proof}
{\bf Item $(i)$:}  of this lemma will be initially shown,  performing the duality product of \eqref{esp-20} for $\dfrac{\rho_1}{\mu_1}A^{-\tau}\lambda \Phi$,  and recalling that the operator $A^\nu$ is self-adjoint for all $\nu\in\mathbb{R}$,  results  in 
\begin{eqnarray*}
\lambda\|\Phi\|^2\hspace*{-0.3cm} &=& \dfrac{\kappa}{\mu_1}\dual{\lambda(-A\phi+\psi_x)}{A^{-\tau} \Phi}+\dfrac{\rho_1}{\mu_1}\dual{f^2}{\lambda A^{-\tau}\Phi}-i\dfrac{\rho_1}{\mu_1}\lambda^2\|A^{-\frac{\tau}{2}}\Phi\|^2\\
&=&\hspace*{-0.3cm}   i\dfrac{\kappa}{\mu_1}\|A^\frac{1-\tau}{2} \Phi\|^2+i\dfrac{\kappa}{\mu_1}\dual{A^\frac{1}{2}f^1}{A^{\frac{1}{2}-\tau}\Phi}+i\dfrac{\kappa}{\mu_1}\dual{\Psi}{A^{-\tau}\Phi_x}-i\dfrac{\kappa}{\mu_1}\dual{f_x^3}{A^{-\tau}\Phi}\\
& &-i\dual{f^2}{\Phi} -i\dfrac{\kappa}{\mu_1}\dual{f^2}{A^{1-\tau}\phi} +i\dfrac{\kappa}{\mu_1}\dual{A^{-\tau}f^2}{\psi_x}  +i\dfrac{\rho_1}{\mu_1}\|A^{-\frac{\tau}{2}}f^2\|^2-i\dfrac{\rho_1}{\mu_1}\lambda^2\|A^{-\frac{\theta}{2}}\Phi\|^2.
\end{eqnarray*}
Noting that: $\|A^{-\tau} \Phi_x\|^2=\dual{A^{-\tau}\Phi_x}{A^{-\tau}\Phi_x}=\dual{-A^{-\tau}\Phi_{xx}}{A^{-\tau}\Phi}=\|A^\frac{1-2\tau}{2} \Phi\|^2$,  taking real part  and considering that  $\frac{1}{2}\leq\tau\leq 1$  using estimative \eqref{dis-10} and using Cauchy-Schwarz and Young  inequalities,  for $\varepsilon>0$ exists $C_\varepsilon$,   such that
\begin{eqnarray*}
|\lambda|\|\Phi\|^2 & \leq & C_\delta\|F\|_\mathcal{H}\|U\|_\mathcal{H}+\varepsilon\|A^{-\tau}\Phi_x\|^2+C_\varepsilon\|\Psi\|^2.
\end{eqnarray*}
As $0\leq\frac{\sigma}{2}$,  then from  estimative  \eqref{dis-10}  $\|\Psi\|^2\leq C_\delta\|F\|_\mathcal{H}\|U\|_\mathcal{H}$.  From the continuous embedding for  $|\lambda|\geq 1$,   the proof of item $(i)$ of this lemma is finished.

{\bf Item $(ii)$:}  Again similarly,  performing the duality product of \eqref{esp-40} for $\dfrac{\rho_2}{\mu_2}A^{-\sigma}\lambda \Psi$, using \eqref{esp-30}, and recalling the                                        self-adjointness of $A^\nu$,   $\nu \in\mathbb{R}$,  leads to 
\begin{eqnarray*}
\lambda\|\Psi\|^2 &=& i\dfrac{b}{\mu_2}\|A^\frac{1-\sigma}{2}\Psi\|^2+i\dfrac{b}{\mu_2}\dual{A^\frac{1}{2}f^3}{A^{\frac{1}{2}-\sigma}\Psi}+i\dfrac{\kappa}{\mu_2}\dual{A^{-\sigma}\Phi_x}{\Psi}+i\dfrac{\kappa}{\mu_2}\dual{f^1_x}{A^{-\sigma} \Psi}\\
& & +i\dfrac{\kappa}{\mu_2}\|A^{-\frac{\sigma}{2}}\Psi\|^2+i\dfrac{\kappa}{\mu_2}\dual{f^3}{A^{-\sigma}\Psi}-i\dfrac{b}{\rho_2}\dual{f^4}{A^{1-\sigma}\psi}-i\dfrac{\kappa}{\mu_2}\dual{f^4}{A^{-\sigma} \phi_x}\\
& & -i\dfrac{\kappa}{\mu_2}\dual{f^4}{A^{-\sigma}\psi}-i\dfrac{\beta}{\mu_2}\dual{f^4}{A^{-\sigma}\Theta}-i\dual{f^4}{\Psi} +   i\dfrac{\rho_2}{\mu_2}\|A^{-\frac{\sigma}{2}}f^4\|^2 -i\dfrac{\rho_2}{\mu_2}\lambda^2\|A^{-\frac{\sigma}{2}}\Psi\|^2.
\end{eqnarray*}
Noting that: $\|A^\frac{1-2\sigma}{2}\Phi\|^2=\|A^{-\sigma} \Phi_x\|^2$, taking real part,    considering $\frac{1}{2}\leq\sigma\leq 1$,    using Cauchy-Schwarz and Young  inequalities,  for $\varepsilon>0$ exists $C_\varepsilon$  such that
\begin{eqnarray*}
|\lambda|\|\Psi\|^2 & \leq & C_\delta\|F\|_\mathcal{H}\|U\|_\mathcal{H}+C_\varepsilon\|A^\frac{1-2\sigma}{2}\Phi\|^2+\varepsilon\|\Psi\|^2.
\end{eqnarray*}
As $-\frac{1-2\sigma}{2} \leq 0\Longleftrightarrow \frac{1}{2}\leq \sigma\leq 1$.   Considering  $|\lambda|\geq 1$,  using  estimative  \eqref{EstimaEquivExp1},    norms $\|F\|_\mathcal{H}$ and $\|U\|_\mathcal{H}$,    the proof of item $(ii)$ of this lemma is finished.

{\bf  Item $(iii)$:}  Finally,  let's prove the $(iii)$ item of this lemma, performing the duality product of \eqref{esp-60} for $\dfrac{\rho_3}{K}A^{-\xi}\lambda \Theta$, using \eqref{esp-30}, and recalling the   self-adjointness of $A^\nu$,   $\nu \in\mathbb{R}$,  result in 
\begin{eqnarray*}
\lambda\|\Theta\|^2 & = & -i\lambda^2\dfrac{\rho_3}{K}\|A^{-\frac{\xi}{2}}\Theta\|^2+i\dfrac{\delta}{K}\|A^\frac{1-\xi}{2}\Theta\|^2+i\dfrac{\delta}{K}\dual{A^\frac{1}{2}f^5}{A^{\frac{1}{2}-\xi}\Theta}-i\dfrac{\delta}{K}\dual{f^6}{A^{1-\xi}\theta}\\
& & +\dfrac{\gamma}{K}\dual{\sqrt{|\lambda|}\Psi}{\dfrac{\lambda}{\sqrt{|\lambda|}}A^{-\xi}\Theta_x} -i\dfrac{\gamma}{K}\dual{f^6}{A^{-\xi}\Psi_x}-i\dual{f^6}{\Theta}+i\dfrac{\rho_3}{K}\|A^{-\frac{\xi}{2}}f^6\|^2.
\end{eqnarray*}
Taking the real part,   applying Cauchy-Schwarz and Young inequalities,  for $\varepsilon>0$ exists $C_\varepsilon>0$ such that
\begin{multline}
|\lambda|\|\Theta\|^2\leq C\{ \|A^\frac{1-\xi}{2}\Theta\|^2+\|A^\frac{1}{2}f^5\|\|A^{\frac{1}{2}-\xi}\Theta\|\\
+\|f^6\|\|A^{1-\xi}\theta\|+\|f^6\|\|A^{-\xi}\Psi_x\|\}
+C_\varepsilon |\lambda|\|\Psi\|^2+\varepsilon |\lambda|\|A^{-\xi}\Theta_x\|^2.
\end{multline}
Considering $\frac{1}{2}\leq\xi\leq 1$,   $\frac{1-2\xi}{2}\leq 0$,   and remembering the identities $\|A^{-\xi}\Psi_x\|^2=\|A^\frac{1-2\xi}{2}\Psi\|^2$,  $\|A^{-\xi}\Theta_x\|^2=\|A^\frac{1-2\xi}{2}\Theta\|^2$  in addition to  using the item $(ii)$ of this lemma the proof of the item $(iii)$ of this lemma is finished.
\end{proof}
The main result of this subsection is the following theorem
\begin{theorem}
The semigroup $S(t)=e^{\mathcal{B}_1t}$ associated to the system \eqref{Eq05A}--\eqref{Eq07A} and \eqref{Eq08A}--\eqref{Eq09A} is analytic when the three parameters $\tau$, $\sigma$ and $\xi$ vary in the interval $[\frac{1}{2},1]$.
\end{theorem}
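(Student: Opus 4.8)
The plan is to invoke the Liu--Zheng characterization of analytic semigroups, Theorem \ref{LiuZAnalyticity}. Its first hypothesis, $\rho(\mathcal{B}_1)\supseteq i\mathbb{R}$, is exactly Lemma \ref{EixoImaginary01}, so no new work is needed there; and $S(t)=e^{\mathcal{B}_1t}$ is already known to be a $C_0$-semigroup of contractions on $\mathcal{H}=\mathcal{H}_1$ from Section~2. It therefore remains to verify the resolvent bound \eqref{Analiticity}, i.e. $\limsup_{|\lambda|\to\infty}\|\lambda(i\lambda I-\mathcal{B}_1)^{-1}\|_{\mathcal{L}(\mathcal{H})}<\infty$. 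Writing $U=(i\lambda I-\mathcal{B}_1)^{-1}F$ for $F\in\mathcal{H}$ and $\lambda\in\mathbb{R}$ with $|\lambda|\geq\delta$, so that $U$ solves the resolvent system \eqref{esp-10}--\eqref{esp-60}, this is equivalent to a bound $|\lambda|\,\|U\|_{\mathcal{H}}\leq C_\delta\|F\|_{\mathcal{H}}$; after multiplying through by $\|U\|_{\mathcal{H}}$ it suffices to establish
\begin{equation*}
|\lambda|\,\|U\|_{\mathcal{H}}^2\leq C_\delta\,\|F\|_{\mathcal{H}}\|U\|_{\mathcal{H}},\qquad |\lambda|\geq\delta .
\end{equation*}
(This is the endpoint $\eta=1$ of the Gevrey scale of Theorem \ref{Theorem1.2Tebon}, consistent with analyticity being the $s=1$ limiting case.)

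Next I would decompose $\|U\|_{\mathcal{H}}^2$ into the six pieces appearing in the definition of the norm on $\mathcal{H}_1$, namely $\rho_1\beta\gamma\|\Phi\|^2$, $\rho_2\kappa\gamma\|\Psi\|^2$, $\beta\kappa K\|\Theta\|^2$, $\beta\kappa\gamma\|\phi_x+\psi\|^2$, $b\kappa\gamma\|A^{1/2}\psi\|^2$ and $\beta\delta\kappa\|A^{1/2}\theta\|^2$, and bound $|\lambda|$ times each of them by $C_\delta\|F\|_{\mathcal{H}}\|U\|_{\mathcal{H}}$. For the three velocity terms this is precisely the content of Lemma \ref{Lema001Analiticity}: inequalities \eqref{Eq004Lema02A}, \eqref{Eq005Lema02A} and \eqref{Eq006Lema02A} give $|\lambda|\|\Phi\|^2$, $|\lambda|\|\Psi\|^2$, $|\lambda|\|\Theta\|^2 \leq C_\delta\|F\|_{\mathcal{H}}\|U\|_{\mathcal{H}}$, valid exactly because $\tau,\sigma,\xi\in[\frac12,1]$ — this is where the hypothesis is used, since the multipliers $A^{-\tau}\lambda\Phi$, $A^{-\sigma}\lambda\Psi$, $A^{-\xi}\lambda\Theta$ generate terms of the type $\|A^{(1-2\tau)/2}\Phi\|$, etc., which are absorbed by the dissipation norms $\|A^{\tau/2}\Phi\|$ only when $\frac{1-2\tau}{2}\leq\frac{\tau}{2}$, i.e. $\tau\geq\frac12$ (and likewise for $\sigma$, $\xi$).

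With the velocity estimates available, the displacement and thermal terms follow by feeding them into Lemma \ref{Regularidad}. Estimate \eqref{EqPrincipalLema} reads $|\lambda|\beta\gamma[\kappa\|\phi_x+\psi\|^2+b\|A^{1/2}\psi\|^2]\leq|\lambda|\beta\gamma[\rho_1\|\Phi\|^2+\rho_2\|\Psi\|^2]+C_\delta\|F\|_{\mathcal{H}}\|U\|_{\mathcal{H}}$; substituting $|\lambda|\|\Phi\|^2,\,|\lambda|\|\Psi\|^2\leq C_\delta\|F\|_{\mathcal{H}}\|U\|_{\mathcal{H}}$ yields $|\lambda|[\kappa\|\phi_x+\psi\|^2+b\|A^{1/2}\psi\|^2]\leq C_\delta\|F\|_{\mathcal{H}}\|U\|_{\mathcal{H}}$. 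Similarly \eqref{EqPrincipalLemaA}, $\beta\kappa\delta|\lambda|\|A^{1/2}\theta\|^2\leq\beta\kappa\rho_3|\lambda|\|\Theta\|^2+C_\delta\|F\|_{\mathcal{H}}\|U\|_{\mathcal{H}}$, combined with $|\lambda|\|\Theta\|^2\leq C_\delta\|F\|_{\mathcal{H}}\|U\|_{\mathcal{H}}$, gives $|\lambda|\|A^{1/2}\theta\|^2\leq C_\delta\|F\|_{\mathcal{H}}\|U\|_{\mathcal{H}}$. Adding the six bounds produces $|\lambda|\|U\|_{\mathcal{H}}^2\leq C_\delta\|F\|_{\mathcal{H}}\|U\|_{\mathcal{H}}$, hence $|\lambda|\|U\|_{\mathcal{H}}\leq C_\delta\|F\|_{\mathcal{H}}$ uniformly for $|\lambda|\geq\delta$, which is \eqref{Analiticity}. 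Theorem \ref{LiuZAnalyticity} then yields analyticity of $S(t)=e^{\mathcal{B}_1t}$ on $[\frac12,1]^3$. The one genuinely delicate step is already isolated in Lemma \ref{Lema001Analiticity} (the three velocity estimates, and the associated choice of the weights $A^{-\tau},A^{-\sigma},A^{-\xi}$); the remainder of the argument is bookkeeping — checking that each of the six terms in the energy norm is handled by one of the two preceding lemmas and that all constants depend only on $\delta$, not on $\lambda$.
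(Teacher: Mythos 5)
Your proposal is correct and follows essentially the same route as the paper: Liu--Zheng's Theorem \ref{LiuZAnalyticity}, with the imaginary-axis condition supplied by Lemma \ref{EixoImaginary01} and the resolvent bound $|\lambda|\|U\|^2_{\mathcal{H}}\leq C_\delta\|F\|_{\mathcal{H}}\|U\|_{\mathcal{H}}$ obtained by combining Lemmas \ref{Regularidad} and \ref{Lema001Analiticity}. You merely spell out explicitly the term-by-term bookkeeping that the paper summarizes with ``it is not difficult to see,'' which is a faithful (and slightly more detailed) rendering of the same argument.
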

\begin{center}
\tdplotsetmaincoords{80}{-35}
\begin{tikzpicture}[tdplot_main_coords, scale=4.0,]
   \coordinate(A) at (0.5,0.5,0.5);
    \coordinate(B) at (1,0.5,0.5);
    \coordinate(C) at (1,0.5,1);
    \coordinate(D) at (0.5,0.5,1);
    \coordinate(E) at (0.5,1,0.5);
    \coordinate(F) at (1,1,0.5);
    \coordinate(G) at (1,1,1);
    \coordinate(H) at (0.5,1,1);
    \filldraw[black!10, fill=blue!20](E)--(F)--(G)--(H);
    \filldraw[black!10, fill=blue!20](A)--(B)--(F)--(E);
    \filldraw[black!10, fill=blue!20](C)--(G)--(H)--(D);
    \filldraw[black!0, fill=blue!20](A)--(B)--(C)--(D)--(A);
    \draw [dashed] (A)--(B);
    \draw (C)--(D);
    \draw [dashed] (D)--(0.5,0.5,0);
    \draw [dashed] (C)--(1,0.5,0);
    \draw [dashed] (E)--(F);
    \draw [dashed] (0.5,1,0)--(H);
    \draw (H)--(G);
    \draw [dashed] (1,1,0)--(1,0,0);
    \draw [dashed] (0.5,1,0)--(0.5,0,0);
    \draw [dashed] (0,0.5,0)--(1,0.5,0);
    \draw [dashed] (1,1,0)--(G);
    \draw [dashed] (A)--(E);
    \draw [dashed] (B)--(F);
    \draw (C)--(G);
    \draw (D)--(H);
    \draw (D)--(0,0.5,1)--(0,1,1)--(H);
    \draw (0,0.5,0)--(0,0.5,1);
    \draw (0,1,0)--(0,1,1);
    \draw (0,0.5,0.5)--(0,1,0.5);
    \draw [dashed] (0,1,0)--(1,1,0);
    \draw [dashed] (0,1,0.5)--(E);
    \draw [dashed] (A)--(0,0.5,0.5);
    \draw (0,0.5,0.5)--(0,0,0.5);
    \draw [dashed] (0.5,0.5,0.5)--(0.5,0,0.5);
    \draw [dashed] (1,0.5,0.5)--(1,0,0.5);
    \draw (1,0.5,1)--(1,0,1);
    \draw (0.5,0.5,1)--(0.5,0,1);
    \draw (0,0.5,1)--(0,0,1);
    \draw (0,0,0.5)--(1,0,0.5);
    \draw (0,0,1)--(1,0,1);
    \draw (0.5,0,0)--(0.5,0,1);
    \draw (1,0,0)--(1,0,1);
    \draw[->, black!60] (0, 0,0) -- (1.2, 0,0);
    \draw[->, black!60] (0, 0,0) -- (0, 1.2,0);
    \draw[->, black!60] (0, 0,0) -- (0, 0,1.4);
    \draw node at (1.25, 0,0) {\Large $\tau$};
    \draw node at (0, 1.25,0) {\Large $\sigma$};
    \draw node at (0, 0,1.45) {\Large $\xi$};
    \draw node at (0, 0,-0.05) {\large $0$};
    \draw node at (1, 0,-0.05) {\large $1$};
    \draw node at (0, 1,-0.05) {\large $1$};
    \draw node at (0.5, 0,-0.06) { $\frac{1}{2}$};
    \draw node at (0, 0.5,-0.06) { $\frac{1}{2}$};
    \draw node at (-0.07, -0.05,0.5) { $\frac{1}{2}$};
    \draw node at (-0.1, -0.09,1) {\large $1$};
    \draw[fill=black](0.5,0.5,0.5) circle (0.3pt);
    \draw[fill=black](1,1,1) circle (0.3pt);
    \draw[fill=black](0.5,1,1) circle (0.3pt);
    \draw[fill=black](0.5,0.5,1) circle (0.3pt);
    \draw[fill=black](1,0.5,1) circle (0.3pt);
    \draw[fill=black](1,0.5,0.5) circle (0.3pt);
    \draw[fill=black](0.5,1,0.5) circle (0.3pt);
    \draw[fill=black](1,1,0.5) circle (0.3pt);
    
\end{tikzpicture}
\end{center}
\begin{center}
{\bf FIG. 01:} Region $R_{A1}$ of Analyticity de $S(t)=e^{\mathcal{B}_1t}$
\end{center}

\begin{proof}
We will prove this theorem will be proved using the Theorem \ref{LiuZAnalyticity},  so the two conditions  \eqref{EixoImaginary} and \eqref{Analiticity} must be proved.
From  Lemma \ref{EixoImaginary01},  the condition \eqref{EixoImaginary} is verified,  it remains to verify the condition    \eqref{Analiticity},  note that proving this condition is equivalent to:  let $\delta>0$ there exists a constant $C_\delta > 0$ such that the solutions of \eqref{Eq05A}--\eqref{Eq08A}
for $|\lambda|\geq  \delta$  satisfy the inequality
 \begin{equation}\label{EquivAnaliticity}
 |\lambda|\|U\|^2_\mathcal{H}\leq C_\delta\|F\|_\mathcal{H}\|U\|_\mathcal{H}.
 \end{equation}
 It is not difficult to see that this inequality \eqref{EquivAnaliticity} arises from the inequalities of the Lemmas \ref{Regularidad} and \ref{Lema001Analiticity},    so the proof of this theorem is finished.
\end{proof}
\subsection{Regularity of the second system}
Next,  two fundamental Lemmas for the determination of the Gevrey class and the proof of the analyticity of the semigroup $S_2(t)=e^{\mathcal{B}_2t}$ are presented.
\begin{lemma}\label{LemaExponencial2}
Let $S_2(t)=e^{\mathcal{B}_2t}$  be a $C_0$-semigroup on contractions on Hilbert space $\mathcal{H}=\mathcal{H}_2$,  the solutions of the system \eqref{Eq01B2}--\eqref{Eq09A} satisfy the inequality
\begin{equation}\label{EstimaExponential2} 
\limsup_{|\lambda|\to\infty}\|(i\lambda I-\mathcal{B}_2) ^{-1}\|_{\mathcal{L}(\mathcal{H})}<\infty.
\end{equation}
\end{lemma}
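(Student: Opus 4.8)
The plan is to reproduce, for $\mathcal{B}_2$, the frequency-domain argument used in Lemma~\ref{LemaExponencial}, keeping track of the different coupling terms of \eqref{Eq01B2}--\eqref{Eq03B2}. As there, it suffices to prove that for every $\delta>0$ there is $C_\delta>0$ such that every $U=(\phi,\Phi,\psi,\Psi,\theta,\Theta)\in D(\mathcal{B}_2)$ solving the resolvent equation $(i\lambda I-\mathcal{B}_2)U=F$ with $\lambda\in\mathbb{R}$, $|\lambda|\geq\delta$, satisfies $\|U\|_{\mathcal{H}_2}\leq C_\delta\|F\|_{\mathcal{H}_2}$; this a~priori bound, together with $i\mathbb{R}\subset\rho(\mathcal{B}_2)$ (proved exactly as in Lemma~\ref{EixoImaginary01}), yields \eqref{EstimaExponential2}. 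Writing \eqref{operadorBgamma} componentwise, the resolvent equation reads
\begin{align*}
i\lambda\phi-\Phi&=f^1, & i\lambda\Phi-\tfrac{\kappa}{\rho_1}(\phi_x+\psi)_x+\tfrac{\mu}{\rho_1}\Theta_x+\tfrac{\mu_1}{\rho_1}A^\tau\Phi&=f^2,\\
i\lambda\psi-\Psi&=f^3, & i\lambda\Psi+\tfrac{b}{\rho_2}A\psi+\tfrac{\kappa}{\rho_2}(\phi_x+\psi)-\tfrac{\mu}{\rho_2}\Theta+\tfrac{\mu_2}{\rho_2}A^\sigma\Psi&=f^4,\\
i\lambda\theta-\Theta&=f^5, & i\lambda\Theta+\tfrac{\delta}{\rho_3}A\theta+\tfrac{\mu}{\rho_3}(\Phi_x+\Psi)+\tfrac{\gamma}{\rho_3}A^\xi\Theta&=f^6.
\end{align*}
From the dissipativity of $\mathcal{B}_2$ (the analogue of \eqref{eqdissipative}, namely $\mathrm{Re}\dual{\mathcal{B}_2U}{U}_{\mathcal{H}_2}=-\mu_1\|A^{\tau/2}\Phi\|^2-\mu_2\|A^{\sigma/2}\Psi\|^2-\gamma\|A^{\xi/2}\Theta\|^2$) one gets, as in \eqref{dis-10},
\[
\mu_1\|A^{\tau/2}\Phi\|^2+\mu_2\|A^{\sigma/2}\Psi\|^2+\gamma\|A^{\xi/2}\Theta\|^2\leq\|F\|_{\mathcal{H}_2}\|U\|_{\mathcal{H}_2},
\]
and hence, using the continuous embedding $D(A^\nu)\hookrightarrow D(A^0)$ for $\nu\geq0$, also $\|\Phi\|^2+\|\Psi\|^2+\|\Theta\|^2\leq C\|F\|_{\mathcal{H}_2}\|U\|_{\mathcal{H}_2}$.

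Next I would recover the elastic part of the norm. Taking the duality product of the second equation with a suitable multiple of $\phi$ and of the fourth equation with the corresponding multiple of $\psi$, replacing $\Phi=i\lambda\phi-f^1$ and $\Psi=i\lambda\psi-f^3$, integrating by parts and using the self-adjointness of the $A^\nu$, and then adding the two identities, one arrives at a relation of the same type as \eqref{Eq001Exponential}. The genuinely new terms with respect to the first system come from the couplings $\mu\Theta_x$ and $-\mu\Theta$ and, after integration by parts, take the form $\mu\dual{\Theta}{\phi_x}$ and $\mu\dual{\Theta}{\psi}$; since $\|\phi_x\|\leq\|\phi_x+\psi\|+\|\psi\|$ and $\|\psi\|\leq C\|A^{1/2}\psi\|$, for every $\varepsilon>0$ they are bounded by $\varepsilon(\|\phi_x+\psi\|^2+\|A^{1/2}\psi\|^2)+C_\varepsilon\|\Theta\|^2$. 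Absorbing the $\varepsilon$-terms on the left, estimating the $f^i$-terms with Cauchy--Schwarz and Young exactly as in Lemma~\ref{LemaExponencial}, and using the bounds for $\|\Phi\|,\|\Psi\|,\|\Theta\|$ already obtained, one concludes
\[
\kappa\|\phi_x+\psi\|^2+b\|A^{1/2}\psi\|^2\leq C_\delta\|F\|_{\mathcal{H}_2}\|U\|_{\mathcal{H}_2}.
\]
Then, taking the duality product of the sixth equation with a suitable multiple of $\theta$ and using $\Theta=i\lambda\theta-f^5$, the new coupling $\mu(\Phi_x+\Psi)$ produces the terms $-\mu\dual{\Phi}{\theta_x}+\mu\dual{\Psi}{\theta}$; since $\|\theta_x\|=\|A^{1/2}\theta\|$ these are controlled by $\varepsilon\|A^{1/2}\theta\|^2+C_\varepsilon(\|\Phi\|^2+\|\Psi\|^2)$, and after absorption we get $\delta\|A^{1/2}\theta\|^2\leq C_\delta\|F\|_{\mathcal{H}_2}\|U\|_{\mathcal{H}_2}$.

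Adding the dissipativity bound and the two displays above, every one of the six terms making up $\|U\|_{\mathcal{H}_2}^2$ is bounded by $C_\delta\|F\|_{\mathcal{H}_2}\|U\|_{\mathcal{H}_2}$, so $\|U\|_{\mathcal{H}_2}\leq C_\delta\|F\|_{\mathcal{H}_2}$, which is the desired estimate. The only place that needs more care than in the first system is the treatment of the extra cross terms: unlike \eqref{Eq05A}, here the first momentum equation carries the coupling $\mu\theta_{tx}$, so the duality products generate the terms $\mu\dual{\Theta}{\phi_x}$ and $\mu\dual{\Phi}{\theta_x}$ involving the ``bad'' quantities $\phi_x$ and $\theta_x$; the observation that makes them harmless is simply that $\|\phi_x\|$ is dominated by $\|\phi_x+\psi\|+\|A^{1/2}\psi\|$ and that $\|\theta_x\|=\|A^{1/2}\theta\|$, so a single Young inequality with small parameter closes the estimate without any frequency splitting.
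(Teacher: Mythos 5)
Your proposal is correct and follows essentially the same route as the paper: the same componentwise resolvent equations, the same dissipativity estimate for $\|A^{\tau/2}\Phi\|$, $\|A^{\sigma/2}\Psi\|$, $\|A^{\xi/2}\Theta\|$, then duality products of the second and fourth equations with multiples of $\phi$ and $\psi$ (and of the sixth with $\theta$), with the extra couplings $\mu\dual{\Theta}{\phi_x+\psi}$ and $\mu\dual{\Phi}{\theta_x}-\mu\dual{\Psi}{\theta}$ absorbed by Young's inequality exactly as in the paper's derivation of \eqref{Exponential002B} and \eqref{Exponential003B}. No gaps worth noting.
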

\begin{proof} 
To show the \eqref{EstimaExponential} inequality,  it suffices to show that,  given $\delta>0$ there exists a constant $C_\delta>0$ such that  the solutions of the system  \eqref{Eq01B2}--\eqref{Eq09A} for $|\lambda|>\delta$ satisfy the inequality
\begin{equation}\label{EstimaEquivExp2}
\dfrac{\|U\|_{\mathcal{H}}}{\|F\|_{\mathcal{H}}}\leq C_\delta\Longleftrightarrow \|U\|^2_{\mathcal{H}}=\|(i\lambda I-\mathcal{B}_2)^{-1} F\|^2_{\mathcal{H}} \leq C_\delta \|F\|_\mathcal{H}\|U\|_{\mathcal{H}}.
\end{equation}
If  $F=(f^1,f^2,f^3,f^4, f^5, f^6)\in \mathcal{H}$ then the solution $U=(\phi,\Phi,\psi, \Psi,\theta,\Theta)\in\hbox{D}(\mathcal{B}_2)$ the resolvent equation $(i\lambda I- \mathcal{B}_2)U=F$ can be written in the form
\begin{eqnarray}
i\lambda \phi-\Phi &=& f^1\quad\rm{in}\quad D(A^\frac{1}{2}),\label{esp-10B}\\
i\lambda \Phi-\frac{\kappa}{\rho_1}(\phi_x+\psi)_x+\dfrac{\mu}{\rho_1}\Theta_x+\frac{\mu_1}{\rho_1}A^\tau \Phi&=& f^2\quad\rm{in}\quad D(A^0),\label{esp-20B}\\
i\lambda  \psi-\Psi &=&f^3\quad \rm{in}\quad D(A^\frac{1}{2}),\label{esp-30B}\\
i\lambda \Psi+\dfrac{b}{\rho_2}A\psi+\dfrac{\kappa}{\rho_2}(\phi_x+\psi)-\dfrac{\mu}{\rho_2}\Theta+\dfrac{\mu_2}{\rho_2}A^\sigma \Psi &=& f^4 \quad\rm{in}\quad D(A^0),\label{esp-40B}\\
i\lambda\theta-\Theta&=&f^5\quad{ \rm in}\quad D(A^\frac{1}{2}),\label{esp-50B}\\
i\lambda\Theta+\dfrac{\delta}{\rho_3}A\theta+\dfrac{\mu}{\rho_3}(\Phi_x+\Psi)+\dfrac{\gamma}{\rho_3}A^\xi\Theta&=& f^6\quad {\rm in}\quad D(A^0). \label{esp-60B}
\end{eqnarray}
Using the fact that the operator is dissipative $ \mathcal{B}_2$,  result in
\begin{multline}\label{dis-10B}
\mu_1\|A^\frac{\tau}{2}\Phi\|^2+\mu_2\|A^\frac{\sigma}{2} \Psi\|^2+\gamma\|A^\frac{\xi}{2}\Theta\|^2= \text{Re}\dual{(i\lambda -\mathcal{B}_2)U}{U}= \text{Re}\dual{F}{U}\leq \|F\|_\mathcal{H}\|U\|_\mathcal{H}.
\end{multline}
On the other hand,  performing the duality product of \eqref{esp-20B} for $\rho_1 \phi$,   and as
the operators $A^\nu$ for all $\nu\in\mathbb{R}$ are self-adjoint,  lead to
\begin{multline*}
\kappa\dual{(\phi_x+\psi)}{\phi_x}= \rho_1\|\Phi\|^2+\rho_1\dual{\Phi}{f^1}
 -i\lambda\mu_1\|A^\frac{\tau}{2}\phi\|^2
+\mu_1\dual{A^\frac{\tau}{2} f^1}{A^\frac{\tau}{2} \phi}
+\rho_1\dual{f^2}{\phi}+\mu\dual{\Theta}{\phi_x},
\end{multline*}
now performing the duality product of \eqref{esp-40B} for $\rho_2 \psi$,   and as
the operators $A^\nu$ for all $\nu\in\mathbb{R}$ are self-adjoint,  result in 
\begin{multline*}
\kappa\dual{(\phi_x+\psi)}{\psi}+b\|A^\frac{1}{2}\psi\|^2=\rho_2\|\Psi\|^2
+\rho_2\dual{\Psi}{f^3}\\
+\mu\dual{\Theta}{\psi}-i\lambda\mu_2\|A^\frac{\sigma}{2}\psi\|^2+\mu_2\dual{A^\frac{\sigma}{2}f^3}{A^\frac{\sigma}{2}\psi}+\rho_2\dual{f^4}{\psi}.
\end{multline*}
Adding the last 2 equations,  result in 
\begin{multline}\label{Eq001ExponentialB}
\kappa\|\phi_x+\psi\|^2+b\|A^\frac{1}{2}\psi\|^2=
\rho_1\|\Phi\|^2+\rho_2\|\Psi\|^2
-i\lambda \{ \mu_1 \|A^\frac{\tau}{2}u\|^2
+\mu_2\|A^\frac{\sigma}{2}\psi\|^2 \}\\
 +\rho_1\dual{\Phi}{f^1}+\rho_2\dual{\Psi}{f^3}
+\mu_1\dual{A^\frac{\tau}{2}f^1}{A^\frac{\tau}{2}\phi} +\mu_2\dual{A^\frac{\sigma}{2}f^3}{A^\frac{\sigma}{2}\phi}\\
+\rho_1\dual{f^2}{\phi}+\rho_2\dual{f^4}{\psi}+\mu\dual{\Theta}{\phi_x+\psi}.
\end{multline}
Taking real part, applying Cauchy-Schwarz and Young inequalities,  for $\varepsilon>0$ exists $C_\varepsilon>0$ such that
\begin{equation*}
\kappa\|\phi_x+\psi\|^2+b\|A^\frac{1}{2}\psi\|^2 
\leq \rho_1\|\Phi\|^2+\rho_2\|\Psi\|^2+\varepsilon\|\phi_x+\psi\|^2+ C_\varepsilon\|\Theta\|^2+C_\delta\|F\|_\mathcal{H}\|U\|_\mathcal{H}.
\end{equation*}
From  estimative \eqref{dis-10B}  and the fact $0\leq\frac{\tau}{2},\;0\leq\frac{\sigma}{2}$ and $0\leq\frac{\xi}{2}$  the continuous embedding $D(A^{\theta_2}) \hookrightarrow D(A^{\theta_1}),\;\theta_2>\theta_1$,  result in 
\begin{eqnarray}\label{Exponential002B}
\kappa\|\phi_x+\psi\|^2+b\|A^\frac{1}{2}\psi\|^2&\leq& C_\delta\|F\|_\mathcal{H}\|U\|_\mathcal{H}.
\end{eqnarray}
On the other hand,  performing the duality product of \eqref{esp-60B} for $\rho_3 \theta$,    and remembering that the operators $A^\nu$ for all $\nu\in\mathbb{R}$ are
self-adjoint and from \eqref{esp-50B},  lead to
\begin{multline*}
\delta\|A^\frac{1}{2}\theta\|^2=\rho_3\|\Theta\|^2+\rho_3\dual{\Theta}{f^5}+\mu\dual{\Phi}{\theta_x}-\mu\dual{\Psi}{\theta}
-i\lambda\gamma\|A^\frac{\xi}{2}\theta\|^2+\gamma\dual{A^\frac{\xi}{2}f^5}{A^\frac{\xi}{2}\theta}+\rho_3\dual{f^6}{\theta}.
\end{multline*}
Taking the real part,  using the inequalities Cauchy-Schwarz and Young,   for $\varepsilon>0$ exists $C_\varepsilon>0$ which does not depend on $\lambda$ such that
\begin{multline*}
\delta\|A^\frac{1}{2}\theta\|^2
\leq C\{\|\Theta\|^2+\|\Theta\|\|f^5\|+\|A^\frac{\xi}{2}f^5\|\|A^\frac{\xi}{2}\theta\|+\|f^6\|\|\theta\|\}\\+C_\varepsilon\{\|\Phi\|^2+\|\Psi\|^2\}+\varepsilon\{ \|\theta\|^2+\|A^\frac{1}{2}\theta\|^2\}.
\end{multline*}
Then, from estimative \eqref{dis-10B}  and the fact $0\leq\frac{\tau}{2},\; 0\leq\frac{\sigma}{2}$ and $0\leq\frac{\xi}{2}\leq\frac{1}{2}$  the continuous embedding $D(A^{\theta_2}) \hookrightarrow D(A^{\theta_1}),\;\theta_2>\theta_1$,   result in 
\begin{equation}\label{Exponential003B}
\beta\kappa\delta\|A^\frac{1}{2}\theta\|^2\leq C_\delta\|F\|_\mathcal{H}\|U\|_\mathcal{H}.
\end{equation}
Finally,  from estimates \eqref{dis-10B}, \eqref{Exponential002B} and \eqref{Exponential003B},  the proof of this lemma is finished.
\end{proof}
\begin{lemma}\label{RegularidadB}
Let $\delta> 0$. There exists a constant $C_\delta > 0$ such that the solutions of \eqref{Eq01B2}--\eqref{Eq09A} for $|\lambda|\geq \delta$ satisfy the inequalities
\begin{eqnarray}\label{EqPrincipalLemaB}
\hspace*{-1.0cm}(i)\; |\lambda|[\kappa\|\phi_x+\psi\|^2+b\|A^\frac{1}{2}\psi\|^2] &\leq
&  |\lambda|[\rho_1\|\Phi\|^2+\rho_2\|\Psi\|^2+C\|\Theta\|^2]  +C_\delta\|F\|_\mathcal{H}\|U\|_\mathcal{H}.\\
\label{EqPrincipalLemaAB}
\hspace*{-1.0cm}(ii)\; \delta|\lambda|\|A^\frac{1}{2}\theta\|^2 &\leq & C |\lambda|\{ \|\Theta\|^2+\|\Phi\|^2\}+ C_\delta\|F\|_\mathcal{H}\|U\|_\mathcal{H}.
\end{eqnarray}
\end{lemma}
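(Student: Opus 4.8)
The plan is to repeat, essentially verbatim, the argument of Lemma~\ref{Regularidad} (multiply the elliptic equations by $\lambda$ times the corresponding displacement and add), keeping careful track of the two couplings that distinguish the second system, namely $\frac{\mu}{\rho_1}\Theta_x$ in \eqref{esp-20B} and $\frac{\mu}{\rho_3}(\Phi_x+\Psi)$ in \eqref{esp-60B}. Throughout one uses the self-adjointness of $A^\nu$, the first-order relations \eqref{esp-10B}, \eqref{esp-30B}, \eqref{esp-50B} in the forms $\lambda\phi=-i(\Phi+f^1)$, $\lambda\psi=-i(\Psi+f^3)$, $\lambda\theta=-i(\Theta+f^5)$, the dissipation identity \eqref{dis-10B}, and Lemma~\ref{LemaExponencial2}; the recurring consequence is that every genuinely lower-order term ($\langle f^j,\cdot\,\rangle$, $\langle A^\tau\Phi,f^1\rangle$, and the like) is $\le C_\delta\|F\|_\mathcal{H}\|U\|_\mathcal{H}$ for $|\lambda|\ge\delta\ge 1$.

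\textbf{Item $(i)$.} I would take the duality product of \eqref{esp-20B} with $\rho_1\lambda\phi$ and of \eqref{esp-40B} with $\rho_2\lambda\psi$, and add. The principal parts give $\lambda[\kappa\|\phi_x+\psi\|^2+b\|A^{1/2}\psi\|^2]$ on the left and $\lambda[\rho_1\|\Phi\|^2+\rho_2\|\Psi\|^2]$ on the right; the $\mu_1,\mu_2$ contributions are purely imaginary and, after taking real parts, are bounded through \eqref{dis-10B}; and the cross terms $\rho_1\langle\lambda\Phi,f^1\rangle+\rho_2\langle\lambda\Psi,f^3\rangle$ are eliminated by substituting $\lambda\Phi,\lambda\Psi$ from \eqref{esp-20B}, \eqref{esp-40B} and integrating by parts, as in the first system (cf.\ \eqref{Eq002Gevrey}), the only extra summand $\mu\langle\Theta,f^1_x\rangle$ being harmless. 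The genuinely new contributions are $-\mu\lambda\langle\Theta,\phi_x\rangle$ (from \eqref{esp-20B}, after integrating $\langle\Theta_x,\phi\rangle$ by parts) and $-\mu\langle\Theta,\lambda\psi\rangle$ (from \eqref{esp-40B}): by $\lambda\psi=-i(\Psi+f^3)$ and \eqref{dis-10B} the latter is $\le\frac{1}{4}|\lambda|\|\Theta\|^2+C_\delta\|F\|_\mathcal{H}\|U\|_\mathcal{H}$, while for the former, since $\|\phi_x\|\le\|\phi_x+\psi\|+C\|A^{1/2}\psi\|$, Young's inequality gives $|\mu\lambda\langle\Theta,\phi_x\rangle|\le\varepsilon|\lambda|\big(\|\phi_x+\psi\|^2+\|A^{1/2}\psi\|^2\big)+C_\varepsilon|\lambda|\|\Theta\|^2$. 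Choosing $\varepsilon$ small enough to absorb the first two summands into the left-hand side produces \eqref{EqPrincipalLemaB}.

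\textbf{Item $(ii)$.} I would take the duality product of \eqref{esp-60B} with $\rho_3\lambda\theta$. Split the $i\lambda\Theta$ term as $\rho_3\lambda\|\Theta\|^2+\rho_3\langle\lambda\Theta,f^5\rangle$ and then replace $\lambda\Theta$ by its value from \eqref{esp-60B} (as in \eqref{Eq001Ltheta}); this turns $\rho_3\langle\lambda\Theta,f^5\rangle$ into lower-order terms, the only new one, $\mu\langle\Phi_x+\Psi,f^5\rangle$, being $O(\|F\|_\mathcal{H}\|U\|_\mathcal{H})$ after one integration by parts. The $\gamma$-term and the $f^6$-term are handled with $\lambda\theta=-i(\Theta+f^5)$ and \eqref{dis-10B}. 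What remains is the coupling $\mu\langle\Phi_x+\Psi,\lambda\theta\rangle$: the part $\mu\langle\Psi,\lambda\theta\rangle=i\mu\langle\Psi,\Theta+f^5\rangle$ is $\le\frac{1}{4}|\lambda|\|\Theta\|^2+C_\delta\|F\|_\mathcal{H}\|U\|_\mathcal{H}$ by \eqref{dis-10B}, while $\mu\langle\Phi_x,\lambda\theta\rangle=-\mu\langle\Phi,\lambda\theta_x\rangle$ obeys $|\mu\langle\Phi,\lambda\theta_x\rangle|\le|\mu|\,|\lambda|\,\|\Phi\|\,\|A^{1/2}\theta\|\le\varepsilon|\lambda|\|A^{1/2}\theta\|^2+C_\varepsilon|\lambda|\|\Phi\|^2$. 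Taking real parts, moving $\rho_3\lambda\|\Theta\|^2$ to the right and absorbing $\varepsilon|\lambda|\|A^{1/2}\theta\|^2$ into $\delta|\lambda|\|A^{1/2}\theta\|^2$ yields \eqref{EqPrincipalLemaAB}.

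The step I expect to be the real obstacle is precisely the control of those two coupling terms. In the first system the couplings $\beta\Theta_x$ (in the $\psi$-equation) and $\gamma\Psi_x$ (in the $\theta$-equation) sit at the level dictated by the energy norm $\mathcal{H}$, so the corresponding products close with only a bounded $\|\Theta\|^2$ remainder; here $\mu\Theta_x$ in the $\phi$-equation and $\mu(\Phi_x+\Psi)$ in the $\theta$-equation are one spatial derivative off relative to $\mathcal{H}$, and neither integration by parts nor the first-order relations remove the extra factor $|\lambda|$. One is thus forced to keep the positive remainders $C|\lambda|\|\Theta\|^2$ in $(i)$ and $C|\lambda|\|\Phi\|^2$ in $(ii)$; handling them afterwards — through the decompositions $\Phi=\Phi_1+\Phi_2$ and $\Theta=\Theta_1+\Theta_2$ in the Gevrey estimate for $S_2(t)=e^{\mathcal{B}_2t}$, and, for analyticity, the restriction $\tau=\xi$ — is where the remaining work of this subsection goes.
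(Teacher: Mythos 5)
Your proposal is correct and follows essentially the same route as the paper: the same multipliers $\rho_1\lambda\phi$, $\rho_2\lambda\psi$ and $\rho_3\lambda\theta$, elimination of the cross terms $\rho_1\langle\lambda\Phi,f^1\rangle+\rho_2\langle\lambda\Psi,f^3\rangle$ via the resolvent equations, and control of the new couplings by Young's inequality with the $\sqrt{|\lambda|}$ split, keeping $C|\lambda|\|\Theta\|^2$ (resp.\ $C|\lambda|\|\Phi\|^2$) on the right, absorbing the $\varepsilon|\lambda|$ pieces into the left, and invoking \eqref{dis-10B} and Lemma~\ref{LemaExponencial2} for the remaining lower-order terms. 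The only cosmetic difference is that you estimate $\mu\lambda\langle\Theta,\phi_x\rangle$ and $\mu\langle\Theta,\lambda\psi\rangle$ separately, while the paper keeps them combined as $\mu\langle\Theta,\lambda(\phi_x+\psi)\rangle$, which changes nothing in the outcome.
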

\begin{proof}
{\bf Item $(i)$:} Performing the duality product of \eqref{esp-20B} for $\rho_1\lambda \phi$,   and remembering that the operators $A^\nu$ for all $\nu\in \mathbb{R} $ are self-adjunct,  lead to 
\begin{eqnarray*}
\kappa\lambda\dual{(\phi_x+\psi)}{\phi_x}& =&\rho_1\lambda\|\Phi\|^2+\rho_1\dual{\lambda \Phi}{f^1}  -i\mu_1\|A^\frac{\tau}{2}\Phi\|^2\\
& & -i\mu_1\beta\gamma\dual{A^\tau \Phi}{f^1}
+i\rho_1\dual{f^2}{\Phi}+i\rho_1\dual{f^2}{f^1}+\mu\dual{\Theta}{\lambda\phi_x}.
\end{eqnarray*}
Now performing the duality product of \eqref{esp-40B} for $\rho_2\lambda \psi$,  and remembering that the operators $A^\nu$ for all $\nu\in \mathbb{R} $ are
self-adjunct, resulting in  
\begin{eqnarray*}
\lambda[\kappa\dual{(\phi_x+\psi)}{\psi}+
b\|A^\frac{1}{2}\psi\|^2]\hspace*{-0.3cm}
& = &\hspace*{-0.3cm}\rho_2\lambda\|\Psi\|^2+\rho_2\dual{\lambda \Psi}{f^3}- i\mu_2\|A^\frac{\sigma}{2}\Psi\|^2  -i\mu_2\dual{A^\sigma \Psi}{f^3} \\
& & +i\rho_2\dual{f^4}{\Psi} +i\rho_2\dual{f^4}{f^3}+\mu\dual{\Theta}{\lambda\psi}.
\end{eqnarray*}
Adding the last 2 equations,  result in 
\begin{multline}\label{Eq001GevreyB}
\lambda[\kappa\|\phi_x+\psi\|^2+b\|A^\frac{1}{2}\psi\|^2] =
\lambda[\rho_1\|\Phi\|^2+\rho_2\|\Psi\|^2]
-i\{\mu_1\|A^\frac{\tau}{2}\Phi\|^2\\+\mu_2\|A^\frac{\sigma}{2}\Psi\|^2\}+\rho_1\dual{\lambda \Phi}{f^1}+\rho_2\dual{\lambda \Psi}{f^3}-i\{\mu_1 \dual{A^\tau \Phi}{f^1}\\
+\mu_2\dual{A^\sigma \Psi}{f^3} \}
+i\rho_1\dual{f^2}{\Phi} +i\rho_2\dual{f^4}{\Psi}
+i\rho_1\dual{f^2}{f^1}\\
+i\rho_2\dual{f^4}{f^3}+\mu\dual{\sqrt{|\lambda|}\Theta}{\dfrac{\lambda}{\sqrt{|\lambda|}}(\phi_x+\psi)}.   
\end{multline}
On the other hand, from \eqref{esp-20B}  and \eqref{esp-40B},  result in 
\begin{multline}\label{Eq002GevreyB}
\rho_1\dual{\lambda \Phi}{f^1}+\rho_2\dual{\lambda \Psi}{f^3}
 =i\{ \kappa\dual{(\phi_x+\psi)}{f^1_x}-\mu\dual{\Theta}{f^1_x}\\
 +\mu_1\dual{A^\tau \Phi}{f^1}-\rho_1\dual{f^2}{f^1}
 +b\dual{A^\frac{1}{2}\psi}{A^\frac{1}{2}f^3}+\kappa\dual{(\phi_x+\psi)}{f^3}\\
 -\mu\dual{\Theta}{f^3}+\mu_2\dual{A^\sigma \Psi}{f^3}-\rho_2\dual{f^4}{f^3}\}.
\end{multline}
Using the identity \eqref{Eq002GevreyB} in the \eqref{Eq001GevreyB} equation and simplifying,  leads to
\begin{eqnarray}
\label{Eq003GevreyB}
\hspace*{-0.45cm}\lambda[\kappa\|\phi_x+\psi\|^2+b\|A^\frac{1}{2}\psi\|^2] \hspace*{-0.25cm} &= & \hspace*{-0.25cm}
\lambda[\rho_1\|\Phi\|^2+\rho_2\|\Psi\|^2]
-i\{\mu_1\|A^\frac{\tau}{2}\Phi\|^2
+\mu_2\|A^\frac{\sigma}{2}\Psi\|^2\}+i\kappa\dual{\phi_x}{f_x^1}\\
\nonumber
& & 
\hspace*{-0.25cm}+i\kappa\dual{\psi}{f_x^1}
-i\mu\dual{\Theta}{f^1_x}
+ib \dual{A^\frac{1}{2}\psi}{A^\frac{1}{2}f^3}+i\kappa\dual{\phi_x}{f^3}+i\kappa\dual{\psi}{f^3}\\
\nonumber
& &\hspace*{-0.25cm} 
-i\mu\dual{\Theta}{f^3}+i\rho_1\dual{f^2}{\Phi}+i\rho_2\dual{f^4}{\Psi} +\mu\dual{\sqrt{|\lambda|}\Theta}{\dfrac{\lambda}{\sqrt{|\lambda|}}(\phi_x+\psi)}.  
\end{eqnarray}
As for $\varepsilon>0$,  exists $C_\varepsilon>0$,  such that  
$\bigg|\mu\dual{\sqrt{|\lambda|}\Theta}{\dfrac{\lambda}{\sqrt{|\lambda|}}(\phi_x+\psi)}\bigg|\leq C_\varepsilon|\lambda|\|\Theta\|^2+\varepsilon|\lambda|\|\phi_x+\psi\|^2,$
taking the real part of the equation \eqref{Eq003GevreyB},  applying the Cauchy-Schwarz and Young inequalities and    estimative \eqref{EstimaEquivExp2}  of Lemma \ref{LemaExponencial2},  the proof of item $(i)$  this lemma is completed.\\
On the other hand,  performing the duality product of \eqref{esp-60B} for $\rho_3\lambda \theta$,   and remembering that the operators $A^\nu$ for all $\nu\in \mathbb{R } $ are self-adjunct,  result in 
\begin{equation}
\delta\lambda\|A^\frac{1}{2}\theta\|^2=\rho_3\lambda\| \Theta\|^2+i\mu\dual{\Phi}{\Theta_x}-i\gamma\|A^\frac{\xi}{2}\Theta\|^2+i\rho_3\dual{f^6}{\Theta}-i\mu\dual{\Psi}{\Theta}.
\label{Eq000LthetaB}
\end{equation}
As
\begin{equation}
\label{Eq001LthetaB}
i\mu\dual{\Phi}{\Theta_x}=i\mu\dual{\Phi}{i\lambda\theta_x-f_x^5}=\mu\dual{\sqrt{|\lambda|}\Phi}{\dfrac{\lambda}{\sqrt{|\lambda|}}\theta_x}
-i\mu\dual{\Phi}{f_x^5}.
\end{equation}
Then,  using \eqref{Eq001LthetaB}  in \eqref{Eq000LthetaB} and taking real part,  lead to 
\begin{multline}\label{Eq002LthetaB}
\delta|\lambda|\|A^\frac{1}{2}\theta\|^2=\rho_3\lambda\| \Theta\|^2+\mu\dual{\sqrt{|\lambda|}\Phi}{\dfrac{\lambda}{\sqrt{|\lambda|}}\theta_x}
-i\mu\dual{\Phi}{f_x^5}
+i\rho_3\dual{f^6}{\Theta}-i\mu\dual{\Psi}{\Theta},
\end{multline}
applying Cauchy-Schwarz and Young inequalities,  for $\varepsilon>0$ exists $C_\varepsilon>0$,  such that 
\begin{equation*}
\delta|\lambda|\|A^\frac{1}{2}\theta\|^2 \leq \rho_3|\lambda|\|\Theta\|^2+ \varepsilon|\lambda|\|A^\frac{1}{2}\theta\|^2+ C_\varepsilon|\lambda|\|\Phi\|^2 
+C\{\|\Phi\|\|A^\frac{1}{2}f^5\|+\|f^6\|\|\Theta\|+\|\Phi\|^2+\|\Theta\|^2 \}.
\end{equation*}
Then,  from Lemma \ref{LemaExponencial2},   the proof of item $(ii)$ of this lemma is finished.
\end{proof}

\subsubsection{Gevrey  class  of the second system}\label{3.2.1}
\begin{theorem} \label{GevreyLaminadoB} The  semigroup  $S(t)=e^{\mathcal{B}_2t}$  associated to system  \eqref{Eq01B2}--\eqref{Eq09A} is of Gevrey class $s>\dfrac{1+r}{2r}$ for $r=\min\{\tau,\sigma,\xi\}$,  for all  $(\tau,\sigma,\xi)\in R_{CG}:=(0,1)^3$ and $\frac{2r}{r+1}\in (0,1)$.
\end{theorem}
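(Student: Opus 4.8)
The plan is to invoke the Taylor-type characterization of Theorem~\ref{Theorem1.2Tebon} with $\eta=\frac{2r}{r+1}$, where $r=\min\{\tau,\sigma,\xi\}$. Since $(\tau,\sigma,\xi)\in(0,1)^3$ we have $r\in(0,1)$, hence $\eta\in(0,1)$, and $\tfrac1\eta=\frac{1+r}{2r}$ is exactly the announced Gevrey exponent; moreover $S_2(t)$ is a bounded (contraction) semigroup, as in the well-posedness part. Thus it suffices to prove that for every $\delta>0$ there is $C_\delta>0$ such that the solution $U=(i\lambda I-\mathcal{B}_2)^{-1}F$ of the resolvent system \eqref{esp-20B}--\eqref{esp-60B} satisfies, for $|\lambda|\ge\delta$,
\[
|\lambda|^{\frac{2r}{r+1}}\,\|U\|_{\mathcal H}^2\le C_\delta\,\|F\|_{\mathcal H}\|U\|_{\mathcal H},
\]
which is equivalent to \eqref{Eq1.5Tebon2020}. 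The argument follows the scheme of the proof of Theorem~\ref{GevreyLaminado}, its building blocks being Lemma~\ref{LemaExponencial2} (boundedness of the resolvent, in the form \eqref{EstimaEquivExp2}), Lemma~\ref{RegularidadB}, and three weighted estimates for the velocities $\Phi,\Psi,\Theta$.

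First I would establish
\[
|\lambda|\,\|\Phi\|^2\le C_\delta\,|\lambda|^{\frac{1-3\tau}{1+\tau}}\bigl\{|\lambda|^{\frac{2\tau}{1+\tau}}\|F\|_{\mathcal H}\|U\|_{\mathcal H}+\|F\|_{\mathcal H}^2\bigr\},
\]
and the analogues with $(\Psi,\sigma)$ and $(\Theta,\xi)$. Each follows from a Liu--Rao splitting $\Phi=\Phi_1+\Phi_2$, where $i\lambda\Phi_1+A\Phi_1=f^2$ (so $|\lambda|\|\Phi_1\|+|\lambda|^{1/2}\|A^{1/2}\Phi_1\|+\|A\Phi_1\|\le C\|F\|_{\mathcal H}$) and $\Phi_2$ solves the residual equation read off from \eqref{esp-20B}. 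Applying $A^{-1/2}$ to the $\Phi_2$-equation, using $\|A^{-1/2}\Theta_x\|^2=\|\Theta\|^2$ for the new coupling term, the dissipation estimate \eqref{dis-10B} (which bounds $\|A^{\tau/2}\Phi\|^2$ and $\|\Theta\|^2$ by $\|F\|_{\mathcal H}\|U\|_{\mathcal H}$) and \eqref{Exponential002B} (which bounds $\|A^{1/2}\phi\|^2,\|\psi\|^2$ by $\|F\|_{\mathcal H}\|U\|_{\mathcal H}$), one gets $\|A^{-1/2}\Phi_2\|^2\le C|\lambda|^{-\frac{4\tau+2}{\tau+1}}\{|\lambda|^{\frac{2\tau}{\tau+1}}\|F\|_{\mathcal H}\|U\|_{\mathcal H}+\|F\|_{\mathcal H}^2\}$; since $\Phi_2=\Phi-\Phi_1$ and $\tfrac\tau2\le\tfrac12$, also $\|A^{\tau/2}\Phi_2\|^2\le C|\lambda|^{-\frac{2\tau}{\tau+1}}\{\cdots\}$. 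Lions' interpolation inequality ($0\in[-\tfrac12,\tfrac\tau2]$) bounds $\|\Phi_2\|^2$, and since $\|\Phi\|^2\le\|\Phi_1\|^2+\|\Phi_2\|^2$ with $|\lambda|^{-2}\le|\lambda|^{-4\tau/(1+\tau)}$ for $\tau\le1$, the claim follows. The $\Psi$-estimate is verbatim from \eqref{esp-40B}, the new term $-\frac{\mu}{\rho_2}\Theta$ being harmless since $\|A^{-1/2}\Theta\|\le C\|\Theta\|$; the $\Theta$-estimate uses \eqref{esp-60B}, where the new coupling $\frac{\mu}{\rho_3}(\Phi_x+\Psi)$ contributes $\|A^{-1/2}\Phi_x\|=\|\Phi\|$ and $\|A^{-1/2}\Psi\|\le C\|\Psi\|$, both $\le\|U\|_{\mathcal H}\le C_\delta^{1/2}\|F\|_{\mathcal H}^{1/2}\|U\|_{\mathcal H}^{1/2}$ by \eqref{EstimaEquivExp2}, so the same type of bound survives.

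Next I would feed these three estimates into Lemma~\ref{RegularidadB}: the right-hand side of \eqref{EqPrincipalLemaB} contains $|\lambda|[\rho_1\|\Phi\|^2+\rho_2\|\Psi\|^2+C\|\Theta\|^2]$ and that of \eqref{EqPrincipalLemaAB} contains $C|\lambda|\{\|\Theta\|^2+\|\Phi\|^2\}$, all of which are now controlled. Adding the resulting bound for $|\lambda|(\kappa\|\phi_x+\psi\|^2+b\|A^{1/2}\psi\|^2+\delta\|A^{1/2}\theta\|^2)$ to the three velocity estimates, and using that $t\mapsto\frac{1-3t}{1+t}$ and $t\mapsto\frac{1-t}{1+t}$ are decreasing on $[0,1]$ (so that, for $|\lambda|\ge1$, the exponents attached to $\tau,\sigma,\xi$ are all dominated by the one attached to $r=\min\{\tau,\sigma,\xi\}$), one obtains
\[
|\lambda|\,\|U\|_{\mathcal H}^2\le C_\delta\,|\lambda|^{\frac{1-3r}{1+r}}\bigl\{|\lambda|^{\frac{2r}{1+r}}\|F\|_{\mathcal H}\|U\|_{\mathcal H}+\|F\|_{\mathcal H}^2\bigr\}.
\]
Dividing by $|\lambda|$ and multiplying by $|\lambda|^{4r/(1+r)}$ (note $\frac{1-3r}{1+r}-1=-\frac{4r}{1+r}$) gives $|\lambda|^{\frac{4r}{1+r}}\|U\|_{\mathcal H}^2\le C_\delta\{|\lambda|^{\frac{2r}{1+r}}\|F\|_{\mathcal H}\|U\|_{\mathcal H}+\|F\|_{\mathcal H}^2\}$; a Young inequality on the cross term (absorbing an $\varepsilon|\lambda|^{4r/(1+r)}\|U\|_{\mathcal H}^2$) yields $|\lambda|^{\frac{2r}{1+r}}\|U\|_{\mathcal H}\le C_\delta\|F\|_{\mathcal H}$, i.e. \eqref{Eq1.5Tebon2020} with $\eta=\frac{2r}{1+r}$, and Theorem~\ref{Theorem1.2Tebon} then gives the Gevrey class $s>\frac{1+r}{2r}$.

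The main obstacle is precisely the extra coupling terms of $\mathcal{B}_2$: unlike in the first system, the temperature equation \eqref{esp-60B} carries a term $\frac{\mu}{\rho_3}\Phi_x$ whose $A^{-1/2}$-norm is $\|\Phi\|$, which is controlled only by the full energy norm $\|U\|_{\mathcal H}$ and not by the dissipation; one must check that the resolvent bound of Lemma~\ref{LemaExponencial2}, already providing $\|U\|_{\mathcal H}^2\le C_\delta\|F\|_{\mathcal H}\|U\|_{\mathcal H}$ for large $|\lambda|$, is strong enough to close the estimates without circularity (and likewise for the $|\lambda|\|\Phi\|^2$ and $|\lambda|\|\Theta\|^2$ terms now appearing in Lemma~\ref{RegularidadB}). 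The secondary point requiring care is the case distinction according to which of $\tau,\sigma,\xi$ realizes $r$ and the accompanying bookkeeping of the powers of $|\lambda|$.
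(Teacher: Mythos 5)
Your proposal is correct and follows essentially the same route as the paper's proof: the Liu--Renardy splitting $\Phi=\Phi_1+\Phi_2$ (and its analogues for $\Psi,\Theta$), the $A^{-\frac{1}{2}}$-reduction of the residual equations, Lions' interpolation on $[-\tfrac12,\tfrac{\tau}{2}]$, the resolvent bound of Lemma \ref{LemaExponencial2} to absorb the extra coupling terms, insertion into Lemma \ref{RegularidadB}, and a final Young inequality giving $|\lambda|^{\frac{2r}{r+1}}\|U\|_{\mathcal H}\le C_\delta\|F\|_{\mathcal H}$, from which Theorem \ref{Theorem1.2Tebon} yields the Gevrey class $s>\frac{1+r}{2r}$. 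The only cosmetic difference is that you collapse the three parameter cases by monotonicity of $t\mapsto\frac{1-3t}{1+t}$, whereas the paper keeps the explicit case distinction according to which of $\tau,\sigma,\xi$ equals $r$.
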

\begin{proof}
From the resolvent  equation $F=(i\lambda I-\mathcal{B}_2)U$  for $\lambda\in\mathbb{R}$,   result in 
$U=(i\lambda I-\mathcal{B}_2)^{-1}F$.  Furthermore to show  \eqref{Eq1.5Tebon2020} this is   theorem\eqref{Theorem1.2Tebon} it is enough to show:
\begin{equation}\label{EstimaEquivalenteGevreyB}
|\lambda |^\frac{2r}{r+1}\|U\|_\mathcal{H}^2\leq C_\delta \|F\|_\mathcal{H}\|U\|_\mathcal{H}\quad\rm{for}\quad \frac{2r}{r+1} \in (0,1).
\end{equation}
where  $r=\min\{\tau,\sigma,\xi\}$,  for all  $(\tau,\sigma,\xi)\in (0,1)^3$.
\\
Next,  $|\lambda|^\frac{2\tau}{1+\tau}\|\Phi\|$,  $|\lambda|^\frac{2\sigma}{1+\sigma}\|\Psi\|$ and $|\lambda|^\frac{2\xi}{1+\xi}\|\Theta\|$ will be estimated.  \\
{\bf Let's start by estimating the term $|\lambda|^\frac{2\tau}{1+\tau}\|\Phi\|$:}  It is  assumed that   $|\lambda|>1$.  Set $\Phi=\Phi_1+\Phi_2$, where $\Phi_1\in D(A)$ and $\Phi_2\in D(A^0)$, with 
\begin{multline}\label{Eq110AnalyRRB}
i\lambda \Phi_1+A \Phi_1=f^2, 
\hspace{1cm} i\lambda \Phi_2=-\dfrac{\kappa}{\rho_1}A\phi+\dfrac{\kappa}{\rho_1}\psi_x -\dfrac{\mu}{\rho_1}\Theta_x-\dfrac{\mu_1}{\rho_1}A^\tau \Phi+A\Phi_1.
\end{multline} 
Firstly,  applying the product duality  on the first equation in \eqref{Eq110AnalyRRB} by $\Phi_1$,  then by $A^{\Phi_1}$ and recalling
that the operator $A$ is self-adjoint, resulting in
self-adjoint, resulting in 
\begin{equation}\label{Eq112AnalyRRB}
|\lambda|\|\Phi_1\| +|\lambda|^\frac{1}{2}\|A^\frac{1}{2}\Phi_1\|+\|A\Phi_1\|\leq C\|F\|_\mathcal{H}.
\end{equation}
Applying the $A^{-\frac{1}{2}}$ operator on the second equation of \eqref{Eq110AnalyRRB},  lead to
\begin{equation*}
i\lambda A^{-\frac{1}{2}}\Phi_2= -\dfrac{\kappa}{\rho_1}A^\frac{1}{2}\phi+\dfrac{\kappa}{\rho_1}A^{-\frac{1}{2}}\psi_x-\dfrac{\mu}{\rho_1}A^{-\frac{1}{2}}\Theta_x-\dfrac{\mu_1}{\rho_1}A^{\tau-\frac{1}{2}} \Phi+A^\frac{1}{2}\Phi_1,
\end{equation*}
 then,  as $\|A^{-\frac{1}{2}}\psi_x\|^2=\|\psi\|^2$  and  $\tau-\frac{1}{2}\leq \frac{\tau}{2}$ taking into account the continuous embedding $D(A^{\theta_2}) \hookrightarrow D(A^{\theta_1}),\;\theta_2>\theta_1$,  result in 
\begin{equation}\label{Eq113AAnalyB}
|\lambda|^2\|A^{-\frac{1}{2}} \Phi_2\|^2 \leq C\{\|A^\frac{1}{2}\phi\|^2+ \|A^\frac{\tau}{2}\Phi\|^2+\|\psi\|^2+\|\Theta\|^2\}+\|A^\frac{1}{2}\Phi_1\|^2
\end{equation}
Using  \eqref{EstimaEquivExp2} and  estimative  \eqref{Eq112AnalyRRB},  result in 
\begin{equation}\label{Eq113AnalyRRB}
\|A^{-\frac{1}{2}}\Phi_2\|^2\leq C|\lambda|^{-\frac{4\tau+2}{\tau+1}}\{ |\lambda|^\frac{2\tau}{\tau+1}\|F\|_\mathcal{H}\|U\|_\mathcal{H}+\|F\|^2_\mathcal{H}\}\quad {\rm for}\quad 0\leq\tau\leq 1.
\end{equation}
On the  other hand, from $\Phi_2=\Phi-\Phi_1$,  \eqref{dis-10B} and  as $\frac{\tau}{2}\leq\frac{1}{2}$ the second inequality of \eqref{Eq112AnalyRRB},   result in 
\begin{eqnarray}\label{Eq114AnalyRRB}
\|A^\frac{\tau}{2} \Phi_2\|^2& \leq &C\{ \|A^\frac{\tau}{2} \Phi\|^2+\|A^\frac{\tau}{2}\Phi_1\|^2\}
\leq  C|\lambda|^{-\frac{2\tau}{\tau+1}}\{|\lambda|^\frac{2\tau}{\tau+1}\|F\|_\mathcal{H}\|U\|_\mathcal{H}+\|F\|^2_\mathcal{H}\}.
\end{eqnarray}
Now,  by Lions' interpolations inequality $0\in[-\frac{1}{2},\frac{\tau}{2}]$, result in 
\begin{equation}\label{Eq115AnalyRRB} 
\|\Phi_2\|^2\leq C (\|A^{-\frac{1}{2}}\Phi_2\|^2)^\frac{\tau}{1+\tau}(\|A^\frac{\tau}{2}\Phi_2\|^2)^\frac{1}{1+\tau}.
\end{equation}
Then, using \eqref{Eq113AnalyRRB} and \eqref{Eq114AnalyRRB} in \eqref{Eq115AnalyRRB}, result in 
\begin{equation}\label{Eq118AnalyRRB}
 \|\Phi_2\|^2\leq C|\lambda|^{\frac{-4\tau}{(1+\tau)}}\{ |\lambda|^\frac{2\tau}{\tau+1} \|F\|_\mathcal{H}\|U\|_\mathcal{H}+\|F\|^2_\mathcal{H}\}.
\end{equation}
Therefore,   as $\|\Phi\|^2\leq  \|\Phi_1\|^2+ \|\Phi_2\|^2$ from first inequality of  \eqref{Eq112AnalyRRB},  \eqref{Eq118AnalyRRB}   and $|\lambda|^{-2}\leq |\lambda|^\frac{-4\tau}{1+\tau}$, we have
\begin{multline}\label{Eq119AnalyRRB}
|\lambda|^\frac{2\tau}{1+\tau}\|\Phi\|\leq C_\delta\|F\|_\mathcal{H}\\
\Longleftrightarrow \quad |\lambda|\|\Phi\|^2\leq C_\delta|\lambda|^\frac{1-3\tau}{1+\tau}\{|\lambda|^\frac{2\tau}{\tau+1} \|F\|_\mathcal{H}\|U\|_\mathcal{H}+\|F\|^2_\mathcal{H}\}\quad\rm{for}\quad 0\leq\tau\leq 1.
\end{multline}

{\bf On the other hand,   let's now estimate the missing term  $|\lambda|^\frac{2\sigma}{1+\sigma}\|\Psi\|$:}  Set $\Psi=\Psi_1+\Psi_2$, where $\Psi_1\in D(A)$ and $\Psi_2\in D(A^0)$, with 
\begin{equation}\label{Eq110AnalyRRWB}
\hspace*{-0.2cm}i\lambda \Psi_1+A \Psi_1=f^4 
\quad{\rm and} \quad i\lambda \Psi_2=-\dfrac{b}{\rho_2}A\psi -\dfrac{\kappa}{\rho_2}\phi_x-\dfrac{\kappa}{\rho_2}\psi+\dfrac{\mu}{\rho_2}\Theta-\dfrac{\mu_2}{\rho_2}A^\sigma \Psi+A\Psi_1.
\end{equation} 
Firstly,  applying the product duality  on the first equation in \eqref{Eq110AnalyRRWB} by $\Psi_1$,  then by $A\Psi_1$ and  recalling that the operator $A$  is self-adjoint, resulting in 
\begin{equation}\label{Eq112AnalyRRWB}
|\lambda|\|\Psi_1\| +|\lambda|^\frac{1}{2}\|A^\frac{1}{2}\Psi_1\|+\|A\Psi_1\|\leq C \|F\|_\mathcal{H}.
\end{equation}
It arises  from the second equation in \eqref{Eq110AnalyRRWB}  that
\begin{equation*}
i\lambda A^{-\frac{1}{2}}\Psi_2= -\dfrac{b}{\rho_2}A^\frac{1}{2}\psi-\dfrac{\kappa}{\rho_2}A^{-\frac{1}{2}}\phi_x-\dfrac{\kappa}{\rho_2}A^{-\frac{1}{2}}\psi+\dfrac{\mu}{\rho_2}A^{-\frac{1}{2}}\Theta-\dfrac{\mu_2}{\rho_2}A^{\sigma-\frac{1}{2}}\Psi+A^\frac{1}{2}\Psi_1,
\end{equation*}
 then,  as $\|A^{-\frac{1}{2}}\phi_x\|^2=\|\phi\|^2$,  $\|A^{-\frac{1}{2}}\Theta_x\|^2=\|\Theta\|^2 $ and  $\sigma-\frac{1}{2}\leq \frac{\sigma}{2}$ taking into account the continuous embedding $D(A^{\theta_2}) \hookrightarrow D(A^{\theta_1}),\;\theta_2>\theta_1$,   result in 
\begin{equation}\label{Eq113AAnalyRRWB}
|\lambda|^2\|A^{-\frac{1}{2}} \Psi_2\|^2\leq C\{ \|\phi\|^2+\|A^{\frac{1}{2}}\psi\|^2+\|A^{-\frac{1}{2}}\Theta\|^2+\|A^\frac{\sigma}{2}\Psi\|^2\}+\|A^\frac{1}{2}\Psi_1\|^2
\end{equation}
Using estimative \eqref{EstimaEquivExp2} of Lemma \ref{LemaExponencial2}  and  second estimative of  \eqref{Eq112AnalyRRWB},   yields 
\begin{equation}\label{Eq113AnalyRRWB}
\|A^{-\frac{1}{2}}\Psi_2\|^2\leq C|\lambda|^{-\frac{4\sigma+2}{\sigma+1}}\{|\lambda|^\frac{2\sigma}{\sigma+1}\|F\|_\mathcal{H}\|U\|_\mathcal{H}+\|F\|_\mathcal{H}\}\quad\text{for}\quad 0\leq \sigma\leq 1.
\end{equation}
On the  other hand, from $\Psi_2=\Psi-\Psi_1$,  \eqref{dis-10B} and  as $0\leq\frac{\sigma}{2}\leq\frac{1}{2}$, the second inequality of \eqref{Eq112AnalyRRWB},  result in  
\begin{multline}\label{Eq114AnalyRRWB}
\|A^\frac{\sigma}{2} \Psi_2\|^2 \leq  C\{ \|A^\frac{\sigma}{2} \Psi\|^2+\|A^\frac{\sigma}{2}\Psi_1\|^2\}
\leq  C|\lambda|^{-\frac{2\sigma}{\sigma+1}}\{|\lambda|^\frac{2\sigma}{\sigma+1}\|F\|_\mathcal{H}\|U\|_\mathcal{H}+\|F\|^2_\mathcal{H}\}.
\end{multline}
Now using  interpolation inequality $0\in [-\frac{1}{2},\frac{\sigma}{2}]$.   Since
$0=\eta\bigg(-\dfrac{1}{2}\bigg)+(1-\eta)\dfrac{\sigma}{2},\quad {\rm for}\quad \eta=\dfrac{\sigma}{1+\sigma},$
 using \eqref{Eq113AnalyRRWB} and \eqref{Eq114AnalyRRWB} we get that
 \begin{eqnarray}
 \|\Psi_2\|^2 &\leq & C_\delta (\|A^{-\frac{1}{2}}\Psi_2\|^2)^{\eta} (\|A^\frac{\sigma}{2}\Psi_2\|^2)^{1-\eta} 
 \label{Eq118AnalyRRWB}
\leq C |\lambda|^{\frac{-4\sigma}{1+\sigma}}\{|\lambda|^\frac{2\sigma}{\sigma+1}\|F\|_\mathcal{H}\|U\|_\mathcal{H}+\|F\|^2_\mathcal{H}\}.
\end{eqnarray}
Also,   as $\|\Psi\|^2\leq  C\{\|\Psi_1\|^2+  \|\Psi_2\|^2\}$ from first inequality of  \eqref{Eq112AnalyRRWB}, estimative \eqref{Eq118AnalyRRWB} and as $|\lambda|^{-2}\leq |\lambda|^\frac{-2\sigma}{1+\sigma}$,   result in 
\begin{multline}\label{Eq119AnalyRRWB}
|\lambda|^\frac{2\sigma}{1+\sigma}\|\Psi\|\leq C_\delta\|F\|_\mathcal{H}\\
\Longleftrightarrow\; |\lambda|\|\Psi\|^2 \leq C_\delta|\lambda|^\frac{1-3\sigma}{\sigma+1}\{|\lambda|^\frac{2\sigma}{\sigma+1} \|F\|_\mathcal{H}\|U\|_\mathcal{H}+\|F\|^2_\mathcal{H}\}\quad\rm{for}\quad 0\leq\sigma\leq 1.
\end{multline}
{\bf Finally,  let's now estimate the missing term  $|\lambda|^\frac{2\xi}{1+\xi}\|\Theta\|$.}  Set $\Theta=\Theta_1+\Theta_2$,  where $\Theta_1\in D(A)$ and $\Theta_2\in D(A^0)$, with 
\begin{multline}\label{Eq110AnalyRRTB}
i\lambda \Theta_1+A \Theta_1=f^6 
\quad{\rm and}\quad
 i\lambda \Theta_2=-\dfrac{\delta}{\rho_3}A\theta -\dfrac{\mu}{\rho_3}\Phi_x-\dfrac{\mu}{\rho_3}\Psi-\dfrac{\gamma}{\rho_3}A^\xi \Theta+A\Theta_1.
\end{multline} 
Firstly,  applying the product duality  the first equation in \eqref{Eq110AnalyRRTB} by $\Theta_1$, then by $A\Theta_1$ and as the operators $A^\nu$ for all $\nu\in\mathbb{R}$ are self-adjoint,  result in
\begin{equation}\label{Eq112AnalyRRTB}
|\lambda|^2\|\Theta_1\|^2+|\lambda|\|A^\frac{1}{2}\Theta_1\|^2+\|A\Theta_1\|^2\leq C  \|F\|^2_\mathcal{H}.
\end{equation}
It arises  from the second equation in \eqref{Eq110AnalyRRTB}  that
\begin{equation*}
i\lambda A^{-\frac{1}{2}}\Theta_2= -\dfrac{\delta}{\rho_3}A^\frac{1}{2}\theta -\dfrac{\mu}{\rho_3}A^{-\frac{1}{2}}\Phi_x-\dfrac{\mu}{\rho_3}A^{-\frac{1}{2}}\Psi-\dfrac{\gamma}{\rho_3}A^{\xi-\frac{1}{2}} \Theta+A^\frac{1}{2}\Theta_1,
\end{equation*}
 then,  as  $\|A^{-\frac{1}{2}}\Phi_x\|^2=\|\Phi\|^2 $, $-\frac{1}{2}\leq 0$ and  $\xi-\frac{1}{2}\leq \frac{\xi}{2}$ taking into account the continuous embedding $D(A^{\theta_2}) \hookrightarrow D(A^{\theta_1}),\;\theta_2>\theta_1$ and using estimative \eqref{EstimaEquivExp2} and  estimative  \eqref{Eq112AnalyRRTB},    lead to
\begin{equation}\label{Eq113AnalyRRTB}
\|A^{-\frac{1}{2}}\Theta_2\|^2\leq C |\lambda|^{-\frac{4\xi+2}{\xi+1}}\{|\lambda|^\frac{2\xi}{\xi+1}\|F\|_\mathcal{H}\|U\|_\mathcal{H}+\|F\|_\mathcal{H}\}.
\end{equation}
On the  other hand, from $\Theta_2=\Theta-\Theta_1$,  \eqref{dis-10B} and  as $\frac{\xi}{2}\leq\frac{1}{2}$ the second inequality of \eqref{Eq112AnalyRRTB},  result in 
\begin{eqnarray}\label{Eq114AnalyRRTB}
\|A^\frac{\xi}{2} \Theta_2\|^2 & \leq &   C|\lambda|^{-\frac{2\xi}{\xi+1}}\{|\lambda|^\frac{2\xi}{\xi+1}\|F\|_\mathcal{H}\|U\|_\mathcal{H}+\|F\|^2_\mathcal{H}\}.
\end{eqnarray}
Now  using  interpolation inequality $0\in [-\frac{1}{2},\frac{\xi}{2}]$.   Since
$0=\eta\bigg(-\dfrac{1}{2}\bigg)+(1-\eta)\dfrac{\xi}{2},\quad {\rm for}\quad \eta=\dfrac{\xi}{1+\xi},$
 using \eqref{Eq113AnalyRRTB} and \eqref{Eq114AnalyRRTB} we get that
 \begin{eqnarray}
 \|\Theta_2\|^2 &\leq & C_\delta (\|A^{-\frac{1}{2}}\Theta_2\|^2)^{\eta} (\|A^\frac{\xi}{2}\Theta_2\|^2)^{1-\eta} 
 \label{Eq118AnalyRRTB}
\leq  |\lambda|^{\frac{-2\xi}{\xi+1}}\{\|F\|_\mathcal{H}\|U\|_\mathcal{H}+\|F\|^2_\mathcal{H}\}
\end{eqnarray}
Also as, $\|\Theta\|^2\leq C\{ \|\Theta_1\|^2+ \|\Theta_2\|^2\}$ from first inequality of  \eqref{Eq112AnalyRRTB},  \eqref{Eq118AnalyRRTB} and as $|\lambda|^{-2}\leq |\lambda|^\frac{-4\xi}{1+\xi}$,  result in 
\begin{multline}\label{Eq119AnalyRRTB}
|\lambda|^\frac{2\xi}{1+\xi}\|\Theta\|\leq C_\delta\|F\|_\mathcal{H}\\
\Longleftrightarrow\quad |\lambda|\|\Theta\|^2\leq C_\delta |\lambda|^\frac{1-3\xi}{1+\xi}\{|\lambda|^\frac{2\xi}{\xi+1} \|F\|_\mathcal{H}\|U\|_\mathcal{H}+\|F||^2_\mathcal{H}\}\quad\rm{for}\quad 0\leq\xi\leq 1.
\end{multline}
Finally,   using    \eqref{Eq119AnalyRRB},    \eqref{Eq119AnalyRRWB} and \eqref{Eq119AnalyRRTB} in the inequality item $(i)$ of Lemma \ref{RegularidadB},   result in 
\begin{multline*}
\beta\gamma[\kappa\|\phi_x+\psi\|^2+b\|A^\frac{1}{2}\psi\|^2]
 \leq
C\big\{ |\lambda|^\frac{-2\tau}{1+\tau}+|\lambda|^\frac{-2\sigma}{1+\sigma}+|\lambda|^\frac{-2\xi}{1+\xi}+|\lambda|^{-1}\big\} \|F\|_\mathcal{H}\|U\|_\mathcal{H}\\
+C\{ |\lambda|^{-\frac{4\tau}{\tau+1}}+ |\lambda|^{-\frac{4\sigma}{\sigma+1}}+ |\lambda|^{-\frac{4\xi}{\xi+1}}\}\|F\|^2_\mathcal{H},
\end{multline*}
then
\begin{multline}\label{Eq020GevreyB}
\beta\gamma|\lambda|[\kappa\|\phi_x+\psi\|^2+b\|A^\frac{1}{2}\psi\|^2]\\
 \leq
C_\delta\left\{ \begin{array}{ccc}
 |\lambda|^\frac{1-3\tau}{1+\tau}\{ |\lambda|^\frac{2\tau}{\tau+1}\|F\|_\mathcal{H}\|U\|_\mathcal{H}+\|F\|^2_\mathcal{H}\} &\text{for} & \tau\leq \sigma\quad{\rm and}\quad \tau\leq \xi, 
 \\\\
  |\lambda|^\frac{1-3\sigma}{1+\sigma}\{ |\lambda|^\frac{2\sigma}{\sigma+1}\|F\|_\mathcal{H}\|U\|_\mathcal{H}+\|F\|^2_\mathcal{H}\} &\text{for} & \sigma \leq \tau\quad{\rm and}\quad \sigma\leq \xi,\\\\
   |\lambda|^\frac{1-3\xi}{1+\xi}\{ |\lambda|^\frac{2\xi}{\xi+1}\|F\|_\mathcal{H}\|U\|_\mathcal{H}+\|F\|^2_\mathcal{H}\} &\text{for} & \xi\leq \sigma\quad{\rm and}\quad \xi\leq \tau.
\end{array}\right.
\end{multline}
Analogously, using  \eqref{Eq119AnalyRRB} and \eqref{Eq119AnalyRRTB} in the inequality item $(ii)$ of Lemma \ref{RegularidadB},   result in 
\begin{equation}\label{Eq021GevreyB}
\delta|\lambda| \|A^\frac{1}{2}\theta\|^2
 \leq
C_\delta \left\{ \begin{array}{ccc}
 |\lambda|^\frac{1-3\tau}{1+\tau}\{ |\lambda|^\frac{2\tau}{\tau+1}\|F\|_\mathcal{H}\|U\|_\mathcal{H}+\|F\|^2_\mathcal{H}\} &\text{for} & \tau\leq \xi, 
\\\\
   |\lambda|^\frac{1-3\xi}{1+\xi}\{ |\lambda|^\frac{2\xi}{\xi+1}\|F\|_\mathcal{H}\|U\|_\mathcal{H}+\|F\|^2_\mathcal{H}\} &\text{for} &  \xi\leq \tau.
\end{array}\right.
\end{equation}
Finally,  summing the    the estimates  \eqref{Eq119AnalyRRB},  \eqref{Eq119AnalyRRWB}, \eqref{Eq119AnalyRRTB}, \eqref{Eq020GevreyB} and  \eqref{Eq021GevreyB} and applying Young inequality, we obtain
\begin{equation*}
\left\{  \begin{array}{ccc}
 |\lambda|^\frac{2\tau}{\tau+1}\|U\|_\mathcal{H}\leq C_\delta|\|F\|_\mathcal{H} &\text{for} & \tau\leq \sigma\quad{\rm and}\quad \tau\leq \xi, 
 \\\\
  |\lambda|^\frac{2\sigma}{1+\sigma}\|U\|_\mathcal{H}\leq C_\delta\|F\|_\mathcal{H} &\text{for} & \sigma \leq \tau\quad{\rm and}\quad \sigma\leq \xi,\\\\
   |\lambda|^\frac{2\xi}{1+\xi}\|U\|_\mathcal{H}\leq C_\delta \|F\|_\mathcal{H} & \text{for} & \xi\leq \sigma\quad{\rm and}\quad \xi\leq \tau.
\end{array}\right.
\end{equation*}
 for $(\tau,\sigma, \xi)\in (0,1)^3$  the proof of this Theorem \ref{GevreyLaminadoB},  is finished.
\end{proof}
\subsubsection{Analyticity of $S(t)=e^{\mathcal{B}_2t}$   for $(\tau,\sigma,\xi)\in \big[\frac{1}{2},  1\big]^3$ such that  $\tau=\xi$.   see Fig. 02}
Before proving the main result of this section,  the following lemma will be proved.
\begin{lemma}\label{Lema001AnaliticityB}
Let $\delta> 0$. There exists a constant $C_\delta > 0$ such that the solutions of \eqref{Eq01B2}--\eqref{Eq09A}
for $|\lambda|\geq  \delta$  satisfy the inequality
\begin{eqnarray}\label{Eq004Lema02B}
(i)\quad |\lambda|[\|\Phi\|^2 +\|\Theta\|^2]\leq  C_\delta\|F\|_\mathcal{H}\|U\|_\mathcal{H}\qquad{\rm for}\qquad \frac{1}{2}\leq\tau=\xi\leq 1.
\\
\label{Eq005Lema02B}
(ii)\quad |\lambda|\|\Psi\|^2\leq  C_\delta\|F\|_\mathcal{H}\|U\|_\mathcal{H}\qquad{\rm for}\qquad \frac{1}{2}\leq\sigma\leq 1.
\end{eqnarray}
\end{lemma}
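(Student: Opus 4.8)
The plan is to follow the same scheme as in the proof of Lemma \ref{Lema001Analiticity}, the only genuinely new feature being the $\phi$--$\theta$ coupling produced by the terms $\mu\Theta_x$ in \eqref{esp-20B} and $\mu(\Phi_x+\Psi)$ in \eqref{esp-60B}. Because of this coupling one cannot estimate $|\lambda|\|\Phi\|^2$ and $|\lambda|\|\Theta\|^2$ separately as in the first system, and the hypothesis $\tau=\xi$ is precisely what makes the coupling cancel once real parts are taken; this is why $\|\Phi\|^2$ and $\|\Theta\|^2$ are bundled together in $(i)$.

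\textbf{Item $(ii)$.} First I would take the duality product of \eqref{esp-40B} with $\frac{\rho_2}{\mu_2}A^{-\sigma}\lambda\Psi$ and use the self-adjointness of the powers $A^\nu$ together with \eqref{esp-30B}. The damping term produces $\lambda\|\Psi\|^2$, while the leading term produces $i\frac{\rho_2}{\mu_2}\lambda^2\|A^{-\sigma/2}\Psi\|^2$; taking real parts removes this and all other purely imaginary contributions (such as $i\|A^{(1-\sigma)/2}\Psi\|^2$, $i\|A^{-\sigma/2}(\phi_x+\psi)\|^2$, $i\|A^{-\sigma/2}\Theta\|^2$, $i\|A^{-\sigma/2}f^4\|^2$). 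For every remaining term that still carries a factor $\lambda$ -- those issuing from $A\psi$, $\phi_x+\psi$, $\Theta$ and $f^4$ -- I would eliminate that factor either by substituting the identity $\lambda\psi=-i(\Psi+f^3)$ coming from \eqref{esp-30B} or by substituting for $\lambda\Psi$ the expression provided by \eqref{esp-40B} itself. After these substitutions each resulting term is bounded, via Cauchy--Schwarz and Young, by $\|F\|_{\mathcal{H}}\|U\|_{\mathcal{H}}$ or by $\|U\|_{\mathcal{H}}^2$; here one uses $\sigma\geq\frac12$, which makes $A^{1/2-\sigma}$ and $A^{(1-2\sigma)/2}$ (the latter arising from expressions like $\|A^{-\sigma}\Psi_x\|=\|A^{(1-2\sigma)/2}\Psi\|$) bounded operators, and one invokes the dissipation estimate \eqref{dis-10B} and the bounded-resolvent estimate \eqref{EstimaEquivExp2} (so that $\|U\|_{\mathcal{H}}^2\leq C_\delta\|F\|_{\mathcal{H}}\|U\|_{\mathcal{H}}$). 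This yields $|\lambda|\|\Psi\|^2\leq C_\delta\|F\|_{\mathcal{H}}\|U\|_{\mathcal{H}}$.

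\textbf{Item $(i)$.} Here I would take the duality product of \eqref{esp-20B} with $\frac{\rho_1}{\mu}A^{-\tau}\lambda\Phi$ and of \eqref{esp-60B} with $\frac{\rho_3}{\mu}A^{-\xi}\lambda\Theta$, and \emph{add} the two identities. With these normalizations the damping terms yield $\frac{\mu_1}{\mu}\lambda\|\Phi\|^2$ and $\frac{\gamma}{\mu}\lambda\|\Theta\|^2$ (quantities of the same sign, which is the point of the choice), while the coupling terms are exactly $\lambda\langle\Theta_x,A^{-\tau}\Phi\rangle$ from \eqref{esp-20B} and $\lambda\langle\Phi_x,A^{-\xi}\Theta\rangle+\lambda\langle\Psi,A^{-\xi}\Theta\rangle$ from \eqref{esp-60B}. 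Since $\tau=\xi$, integrating by parts (the boundary terms vanish because $\Phi,\Theta\in D(A^{1/2})=H_0^1(0,L)$) and using the self-adjointness of $A^{-\tau}$ gives $\langle\Phi_x,A^{-\tau}\Theta\rangle=-\overline{\langle\Theta_x,A^{-\tau}\Phi\rangle}$, so that $\lambda\langle\Theta_x,A^{-\tau}\Phi\rangle+\lambda\langle\Phi_x,A^{-\tau}\Theta\rangle=2i\,\mathrm{Im}\big(\lambda\langle\Theta_x,A^{-\tau}\Phi\rangle\big)$ is purely imaginary and disappears in the real part of the sum. The leftover $\lambda\langle\Psi,A^{-\xi}\Theta\rangle$ I would control by Young's inequality together with item $(ii)$: $|\lambda\langle\Psi,A^{-\xi}\Theta\rangle|\leq\varepsilon|\lambda|\|\Theta\|^2+C_\varepsilon|\lambda|\|\Psi\|^2\leq\varepsilon|\lambda|\|\Theta\|^2+C_\delta\|F\|_{\mathcal{H}}\|U\|_{\mathcal{H}}$, the first term being absorbed on the left. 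The ``elliptic'' terms $\lambda\langle(\phi_x+\psi)_x,A^{-\tau}\Phi\rangle$ and $\lambda\langle A\theta,A^{-\xi}\Theta\rangle$ are handled exactly as in Lemma \ref{Lema001Analiticity}: substituting $\lambda\phi=-i(\Phi+f^1)$ and $\lambda\theta=-i(\Theta+f^5)$, the pieces $i\|A^{(1-\tau)/2}\Phi\|^2$ and $i\|A^{(1-\xi)/2}\Theta\|^2$ vanish in the real part, while $\langle A^{1/2}f^1,A^{1/2-\tau}\Phi\rangle$, $\langle A^{1/2}f^5,A^{1/2-\xi}\Theta\rangle$ and $\langle\psi_x,A^{-\tau}\Phi\rangle$ are $\leq C\|F\|_{\mathcal{H}}\|U\|_{\mathcal{H}}$ or $\leq C\|U\|_{\mathcal{H}}^2$ because $\tau=\xi\geq\frac12$ makes $A^{1/2-\tau}$ and $A^{(1-2\tau)/2}$ bounded. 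Finally the right-hand sides $\lambda\langle f^2,A^{-\tau}\Phi\rangle$ and $\lambda\langle f^6,A^{-\xi}\Theta\rangle$ are treated by replacing $\lambda\Phi$ and $\lambda\Theta$ by the expressions that \eqref{esp-20B} and \eqref{esp-60B} give for them. Collecting everything yields $\frac{\mu_1}{\mu}|\lambda|\|\Phi\|^2+\frac{\gamma}{\mu}|\lambda|\|\Theta\|^2\leq C_\delta\|F\|_{\mathcal{H}}\|U\|_{\mathcal{H}}$, which is $(i)$.

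\textbf{Main obstacle.} The whole difficulty is concentrated in the $\phi$--$\theta$ coupling of $(i)$. The hard part will be to realize that one should add the two multiplier identities with exactly the normalization that makes the two coupling contributions mutually conjugate up to sign, so that they cancel in the real part while the surviving diagonal terms $|\lambda|\|\Phi\|^2$ and $|\lambda|\|\Theta\|^2$ keep a common sign and can be absorbed; this cancellation is available only when the fractional orders of the two corresponding dampings coincide, i.e. $\tau=\xi$, which is the restriction appearing in the statement. The rest is bookkeeping: one must verify that every fractional power occurring along the way -- $A^{1/2-\tau}$ and $A^{(1-2\tau)/2}$ in $(i)$, $A^{1/2-\sigma}$ and $A^{(1-2\sigma)/2}$ in $(ii)$ -- is a bounded operator, which is guaranteed precisely by $\tau,\xi\geq\frac12$ in $(i)$ and by $\sigma\geq\frac12$ in $(ii)$.
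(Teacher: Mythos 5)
Your proposal is correct and, on the decisive point, coincides with the paper's own argument: item $(i)$ is obtained in both cases by pairing \eqref{esp-20B} with a multiple of $\lambda A^{-\tau}\Phi$ and \eqref{esp-60B} with a multiple of $\lambda A^{-\xi}\Theta$, adding, and observing that under $\tau=\xi$ the two $\Phi$--$\Theta$ coupling contributions are conjugates up to sign, hence purely imaginary in the sum and invisible after taking real parts (the paper phrases this as ${\rm Re}\{\dual{\Theta}{\lambda A^{-\tau}\Phi_x}-\dual{\lambda A^{-\tau}\Phi_x}{\Theta}\}=0$; your normalization by $\mu$ instead of $\gamma$ is immaterial), while $\tau=\xi\geq\frac12$ and $\sigma\geq\frac12$ make all the occurring negative powers $A^{\frac12-\tau}$, $A^{\frac{1-2\tau}{2}}$, $A^{\frac12-\sigma}$, $A^{\frac{1-2\sigma}{2}}$ bounded, exactly as in the paper. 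Where you genuinely deviate is the logical order of the two items and the disposal of the remaining cross terms: the paper proves $(i)$ first and then, in $(ii)$, keeps the $\Theta$-coupling of \eqref{esp-40B} in the form $\dual{\sqrt{|\lambda|}\,\Theta}{\tfrac{\lambda}{\sqrt{|\lambda|}}A^{-\sigma}\Psi}$, bounds it by $C_\varepsilon|\lambda|\|\Theta\|^2+\varepsilon|\lambda|\|\Psi\|^2$ and invokes item $(i)$ to control $|\lambda|\|\Theta\|^2$; you instead prove $(ii)$ first, eliminating the factor $\lambda$ in that coupling by re-substituting the expression for $\lambda\Psi$ from \eqref{esp-40B} itself (which produces only terms such as $\dual{A^{\frac12-\sigma}\Theta}{A^{\frac12}\psi}$, $\dual{A^{-\sigma}\Theta}{\phi_x+\psi}$, $\dual{\Theta}{\Psi}$, $\dual{A^{-\sigma}\Theta}{f^4}$, all bounded by $\|U\|_\mathcal{H}^2$ or $\|F\|_\mathcal{H}\|U\|_\mathcal{H}$ and absorbed via \eqref{EstimaEquivExp2}), and then use $(ii)$ inside $(i)$ to absorb the leftover $\lambda\dual{\Psi}{A^{-\xi}\Theta}$. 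Both dependency structures are sound and there is no circularity in yours; your version has the small advantage that the proof of $(ii)$ is self-contained and uses only $\sigma\geq\frac12$, matching the literal hypothesis of \eqref{Eq005Lema02B}, whereas the paper's proof of $(ii)$ implicitly also requires the hypotheses of $(i)$. (Incidentally, your explicit retention of the $\lambda\dual{\Psi}{A^{-\xi}\Theta}$ term is cleaner than the paper's displayed identity \eqref{Eq006Lema02B3}, where that coupling term is not visible and appears to have been absorbed or omitted in the bookkeeping.)
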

\begin{proof}
{\bf Item $(i)$:} 
Realizing the duality product of \eqref{esp-20B}  with $\dfrac{\rho_1}{\gamma}A^{-\tau}\lambda \Phi$  and using the property that the operator $A^\nu$ is self-adjoint for all $\nu\in \mathbb{R}$ and using the equation \eqref{esp-10B},  result in 
\begin{eqnarray}
\dfrac{\mu_1}{\gamma}\lambda\|\Phi\|^2 &=& i\dfrac{\kappa}{\gamma}\|A^\frac{1-\tau}{2} \Phi\|^2+i\dfrac{\kappa}{\gamma}\dual{A^\frac{1}{2}f^1}{A^{\frac{1}{2}-\tau}\Phi}+i\dfrac{\kappa}{\gamma}\dual{\Psi}{A^{-\tau}\Phi_x}-i\dfrac{\kappa}{\mu_1}\dual{f_x^3}{A^{-\tau}\Phi}\\
\label{Eq006Lema02B2}
& &-i\dual{f^2}{\Phi} -i\dfrac{\kappa}{\gamma}\dual{f^2}{A^{1-\tau}\phi} +i\dfrac{\kappa}{\gamma}\dual{A^{-\tau}f^2}{\psi_x}+i\dfrac{\rho_1}{\gamma}\|A^{-\frac{\tau}{2}}f^2\|^2  \\
\nonumber
& & -i\dfrac{\rho_1}{\gamma}\lambda^2\|A^{-\frac{\theta}{2}}\Phi\|^2 +\dfrac{\mu}{\gamma}\dual{\Theta}{\lambda A^{-\tau}\Phi_x}.
\end{eqnarray}
On the other hand,  now performing the duality product of \eqref{esp-60B} for $\dfrac{\rho_3}{\gamma}A^{-\xi}\lambda \Theta$, using \eqref{esp-30B}, and as
the operators $A^\nu$ for all $\nu\in\mathbb{R}$ are self-adjoint,  result in 
\begin{eqnarray}
\nonumber
\lambda\|\Theta\|^2 \hspace*{-0.3cm}& = &\hspace*{-0.3cm} -i\lambda^2\dfrac{\rho_3}{\gamma}\|A^{-\frac{\xi}{2}}\Theta\|^2+i\dfrac{\delta}{\gamma}\|A^\frac{1-\xi}{2}\Theta\|^2+i\dfrac{\delta}{\gamma}\dual{A^\frac{1}{2}f^5}{A^{\frac{1}{2}-\xi}\Theta}-i\dfrac{\delta}{\gamma}\dual{f^6}{A^{1-\xi}\theta}-i\dfrac{\mu}{\gamma}\dual{f^6}{A^{-\xi}\Phi_x}\\
\label{Eq006Lema02B3}
& & -i\dfrac{\mu}{\gamma}\dual{f^6}{A^{-\xi}\Psi} -i\dual{f^6}{\Theta}+i\dfrac{\rho_3}{\gamma}\|A^{-\frac{\xi}{2}}f^6\|^2-\dfrac{\mu}{\gamma}\dual{\lambda A^{-\xi}\Phi_x}{\Theta}.
\end{eqnarray}
Imposing the condition $\tau=\xi$ and since ${\rm Re}\{\dual{\Theta}{\lambda A^{-\tau}\Phi_x}-\dual{\lambda A^{-\tau}\Phi_x}{\Theta}\}=0$,  adding the equations \eqref{Eq006Lema02B2} and \eqref{Eq006Lema02B3} and then taking the real part, result in 
\begin{eqnarray*}
\lambda\bigg [ \dfrac{\mu_1}{\gamma}\|\Phi\|^2+\|\Theta\|^2\bigg] &= & i\dfrac{\kappa}{\gamma}\dual{A^\frac{1}{2}f^1}{A^{\frac{1}{2}-\tau}\Phi}+i\dfrac{\kappa}{\gamma}\dual{\Psi}{A^{-\tau}\Phi_x}-i\dfrac{\kappa}{\mu_1}\dual{f_x^3}{A^{-\tau}\Phi}-i\dual{f^2}{\Phi}\\
& & -i\dfrac{\kappa}{\gamma}\dual{f^2}{A^{1-\tau}\phi} +i\dfrac{\kappa}{\gamma}\dual{A^{-\tau}f^2}{\psi_x}  +i\dfrac{\delta}{\gamma}\dual{A^\frac{1}{2}f^5}{A^{\frac{1}{2}-\xi}\Theta}
\\
& & -i\dfrac{\delta}{\gamma}\dual{f^6}{A^{1-\xi}\theta}-i\dfrac{\mu}{\gamma}\dual{f^6}{A^{-\xi}\Phi_x} -i\dfrac{\mu}{\gamma}\dual{f^6}{A^{-\xi}\Psi} -i\dual{f^6}{\Theta}
\end{eqnarray*}
From $|i\frac{\kappa}{\gamma}\dual{\Psi}{A^{-\tau}\Psi_x}|\leq C\{\|\Psi\|^2+\|A^\frac{1-2\tau}{2}\Phi\|^2\}$ and  $\frac{1}{2}\leq\tau=\xi\leq 1$,   applying Cauchy-Schwarz and Young inequalities,  norms $\|F\|_\mathcal{H}$ and $\|U\|_\mathcal{H}$,  the proof of item $(i)$ of this lemma is finished.\\
{\bf Item $(ii)$:}  Similarly,   performing the duality product of \eqref{esp-40B} for $\dfrac{\rho_2}{\mu_2}A^{-\sigma}\lambda \Psi$, using \eqref{esp-30B}, and recalling the                                        self-adjointness of $A^\nu$,   $\nu \in\mathbb{R}$,  lead to 
\begin{eqnarray*}
\lambda\|\Psi\|^2 
\hspace*{-0.2cm}&=&\hspace*{-0.2cm}  -i\dfrac{\rho_2}{\mu_2}\lambda^2\|A^{-\frac{\sigma}{2}}\Psi\|^2+i\dfrac{b}{\mu_2}\|A^\frac{1-\sigma}{2}\Psi\|^2+i\dfrac{b}{\mu_2}\dual{A^\frac{1}{2}f^3}{A^{\frac{1}{2}-\sigma}\Psi}-i\dfrac{\kappa}{\mu_2}\dual{\Phi}{A^{-\sigma}\Psi_x}\\
& & +i\dfrac{\kappa}{\mu_2}\dual{f^1_x}{A^{-\sigma}\Psi}+i\dfrac{\kappa}{\mu_2}\|A^{-\frac{\sigma}{2}}\Psi\|^2+i\dfrac{\kappa}{\mu_2}\dual{f^3}{A^{-\sigma}\Psi} \\
& &+\dfrac{\mu}{\mu_2}\dual{\sqrt{|\lambda|}\Theta}{\dfrac{\lambda}{\sqrt{|\lambda|}}A^{-\sigma}\Psi}-i\dfrac{b}{\mu_2}\dual{f^4}{A^{1-\sigma}\Psi}-i\dfrac{\kappa}{\mu_2}\dual{f^4}{A^{-\sigma}\phi_x}\\
& &-i\dfrac{\kappa}{\mu_2}\dual{f^4}{A^{-\sigma}\psi} +i\dfrac{\mu}{\mu_2}\dual{f^4}{A^{-\sigma}\Theta}-i\dual{f^4}{\Psi}+i\dfrac{\rho_2}{\mu_2}\|f^4\|^2.
\end{eqnarray*}
Noting that:  For $\frac{1}{2}\leq\sigma\leq 1$ lead to   $\frac{1-\sigma}{2}\leq \frac{\sigma}{2}$,  $\frac{1}{2}-\sigma\leq 0$  and $\frac{1-2\sigma}{2}\leq 0$,  on the other hand  as $\|A^{-\sigma} \Psi_x\|^2=\|A^\frac{1-2\sigma}{2}\Psi\|^2$,  taking real part  and considering that  $\frac{1}{2}\leq\sigma\leq 1$  and using Cauchy-Schwarz and Young  inequalities,  norms $\|F\|_\mathcal{H}$ and $\|U\|_\mathcal{H}$,   for $\varepsilon>0$,  exists $C_\varepsilon>0$  independent of $\lambda$ such that
\begin{eqnarray*}
|\lambda|\|\Psi\|^2 & \leq & C_\delta\|F\|_\mathcal{H}\|U\|_\mathcal{H}+C\{\|\Phi\|^2+\|\Psi\|^2\}+C_\varepsilon|\lambda|\|\Theta\|^2+\varepsilon|\lambda|\|\Psi\|^2.
\end{eqnarray*}
From   \eqref{EstimaEquivExp2} and item $(i)$ of this lemma,    the proof of item $(ii)$ of this lemma is finished.
\end{proof}
\begin{theorem}
The semigroup $S(t)=e^{\mathcal{B}_2t}$ associated to the system  \eqref{Eq01B2}--\eqref{Eq09A} is analytic when the three parameters $\tau$, $\sigma$ and $\xi$ vary in the interval $[\frac{1}{2},1]$ with $\tau=\xi$:
\begin{center}
\tdplotsetmaincoords{80}{-35}
\begin{tikzpicture}[tdplot_main_coords, scale=4.5,]
    \coordinate(A) at (0.5,0.5,0.5);
    \coordinate(B) at (1,0.5,0.5);
    \coordinate(C) at (1,0.5,1);
    \coordinate(D) at (0.5,0.5,1);
    \coordinate(E) at (0.5,1,0.5);
    \coordinate(F) at (1,1,0.5);
    \coordinate(G) at (1,1,1);
    \coordinate(H) at (0.5,1,1);
    \filldraw[fill=blue!20](A)--(C)--(G)--(E);
    \draw (A)--(B)--(C)--(D)--(A);
    \draw [dashed] (E)--(F);
    \draw (E)--(H)--(G);
    \draw [dashed] (F)--(G);
    \draw (A)--(E);
    \draw [dashed] (B)--(F);
    \draw (C)--(G);
    \draw (D)--(H);
    \draw [blue, dashed] (A)--(C);
    \draw [blue, dashed] (E)--(G);
    \draw[->, black!60] (0, 0,0.3) -- (1.1, 0,0.3);
    \draw[->, black!60] (0, 0,0.3) -- (0, 1.1,0.3);
    \draw[->, black!60] (0, 0,0.3) -- (0, 0,1.6);
    \draw node at (1.15, 0,0.3) {\Large $\tau$};
    \draw node at (0, 1.15,0.3) {\Large $\sigma$};
    \draw node at (0, 0,1.65) {\Large $\xi$};
    \draw node at (1,1,1.04) { \tiny  $(1,1,1)$};
    \draw node at (1.14,0.5,1) {\tiny $\left(1,\dfrac{1}{2},1\right)$};
    \draw node at (0.27,0.85,0.545) {\tiny  $\left(\dfrac{1}{2},1,\dfrac{1}{2}\right)$};
    \draw node at (0.52, 0.5, 0.44) {\tiny $\left(\dfrac{1}{2},\dfrac{1}{2},\dfrac{1}{2}\right)$};
    \draw[fill=black](1,1,1) circle (0.35pt);
    \draw[fill=black](0.5,0.5,0.5) circle (0.35pt);
    \draw[fill=black](C) circle (0.35pt);
    \draw[fill=black](E) circle (0.35pt);
    
\end{tikzpicture}
\end{center}
\begin{center}
{\bf FIG. 02:} Region $R_{A2}$ of Analyticity de $S(t)=e^{\mathcal{B}_2t}$
\end{center}
\end{theorem}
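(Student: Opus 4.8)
The plan is to apply the Liu--Zheng criterion, Theorem \ref{LiuZAnalyticity}: one must check that $i\mathbb{R}\subset\rho(\mathcal{B}_2)$ and that $\limsup_{|\lambda|\to\infty}\|\lambda(i\lambda I-\mathcal{B}_2)^{-1}\|_{\mathcal{L}(\mathcal{H})}<\infty$. For the first condition I would argue by contradiction exactly as in Lemma \ref{EixoImaginary01}: if some $i\lambda_0$ were on the boundary of the resolvent set, choose $\lambda_n\to\lambda_0$ bounded away from $0$ and unit vectors $U_n=(\phi_n,\Phi_n,\psi_n,\Psi_n,\theta_n,\Theta_n)\in\mathcal{D}(\mathcal{B}_2)$ with $(i\lambda_n-\mathcal{B}_2)U_n\to 0$; the dissipation identity \eqref{dis-10B} forces $\|A^{\tau/2}\Phi_n\|,\|A^{\sigma/2}\Psi_n\|,\|A^{\xi/2}\Theta_n\|\to 0$, while \eqref{Exponential002B} and \eqref{Exponential003B} give $\|\phi_{xn}+\psi_n\|,\|A^{1/2}\psi_n\|,\|A^{1/2}\theta_n\|\to 0$, contradicting $\|U_n\|_{\mathcal{H}}=1$.

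For the quantitative estimate, the same reduction used for $\mathcal{B}_1$ shows it is enough to prove that for $|\lambda|\geq\delta$ and $U=(i\lambda I-\mathcal{B}_2)^{-1}F$,
\begin{equation*}
|\lambda|\,\|U\|_{\mathcal{H}}^2\leq C_\delta\|F\|_{\mathcal{H}}\|U\|_{\mathcal{H}}.
\end{equation*}
Here I would simply assemble the estimates already available. Lemma \ref{Lema001AnaliticityB} gives $|\lambda|\|\Phi\|^2$, $|\lambda|\|\Theta\|^2$ and $|\lambda|\|\Psi\|^2$ all bounded by $C_\delta\|F\|_{\mathcal{H}}\|U\|_{\mathcal{H}}$ under the hypothesis $\tfrac12\le\tau=\xi\le1$, $\tfrac12\le\sigma\le1$. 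Substituting these into item $(i)$ of Lemma \ref{RegularidadB} bounds $|\lambda|[\kappa\|\phi_x+\psi\|^2+b\|A^{1/2}\psi\|^2]$, and substituting them into item $(ii)$ of Lemma \ref{RegularidadB} bounds $\delta|\lambda|\|A^{1/2}\theta\|^2$, both again by $C_\delta\|F\|_{\mathcal{H}}\|U\|_{\mathcal{H}}$. Summing the six contributions recovers $|\lambda|\|U\|_{\mathcal{H}}^2$ up to positive constants, which finishes the verification of \eqref{Analiticity}.

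The genuinely delicate point — and the reason the region of analyticity of the second system shrinks to the diagonal slice $\tau=\xi$ instead of the whole cube $[\tfrac12,1]^3$ — is item $(i)$ of Lemma \ref{Lema001AnaliticityB}, which I expect to be the main obstacle. The idea is to multiply \eqref{esp-20B} by $\tfrac{\rho_1}{\gamma}A^{-\tau}\lambda\Phi$ and \eqref{esp-60B} by $\tfrac{\rho_3}{\gamma}A^{-\xi}\lambda\Theta$, use $\Phi=i\lambda\phi-f^1$ and $\Theta=i\lambda\theta-f^5$, and add the two resulting identities. Each identity separately produces a ``bad'' term $\pm\tfrac{\mu}{\gamma}\langle\Theta,\lambda A^{-\tau}\Phi_x\rangle$ coming from the cross couplings $\mu\theta_{tx}$ and $\mu(\phi_x+\psi)_t$, which cannot be absorbed in isolation; imposing $\tau=\xi$ turns them into exact conjugates, so $\mathrm{Re}\{\langle\Theta,\lambda A^{-\tau}\Phi_x\rangle-\langle\lambda A^{-\tau}\Phi_x,\Theta\rangle\}=0$ and they cancel upon taking real parts. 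Everything that survives is then controlled by Cauchy--Schwarz, Young, the dissipation bound \eqref{dis-10B}, the boundedness estimates \eqref{Exponential002B}--\eqref{Exponential003B}, the continuous embeddings, and the resolvent bound \eqref{EstimaEquivExp2}, the condition $\tfrac12\le\tau=\xi\le1$ being exactly what makes the fractional powers of $A$ that appear act on already-controlled quantities; the last term to dispose of is $|\tfrac{\kappa}{\gamma}\langle\Psi,A^{-\tau}\Phi_x\rangle|\le C\{\|\Psi\|^2+\|A^{\frac{1-2\tau}{2}}\Phi\|^2\}$, handled via $\|\Psi\|^2\le C_\delta\|F\|_{\mathcal{H}}\|U\|_{\mathcal{H}}$ from \eqref{dis-10B}. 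Once $(i)$ is established, $(ii)$ of the same lemma follows by multiplying \eqref{esp-40B} by $\tfrac{\rho_2}{\mu_2}A^{-\sigma}\lambda\Psi$ and invoking $(i)$ to absorb the term $C_\varepsilon|\lambda|\|\Theta\|^2$.
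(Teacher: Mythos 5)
Your proposal is correct and follows essentially the same route as the paper: apply the Liu--Zheng criterion (Theorem \ref{LiuZAnalyticity}), verify $i\mathbb{R}\subset\rho(\mathcal{B}_2)$ by the same contradiction argument as Lemma \ref{EixoImaginary01}, and obtain the resolvent bound $|\lambda|\|U\|^2_{\mathcal{H}}\leq C_\delta\|F\|_{\mathcal{H}}\|U\|_{\mathcal{H}}$ by combining Lemmas \ref{RegularidadB} and \ref{Lema001AnaliticityB}. The only difference is that you spell out steps the paper leaves implicit (the adaptation of the imaginary-axis lemma to $\mathcal{B}_2$ and the $\tau=\xi$ cancellation inside Lemma \ref{Lema001AnaliticityB}, which is a previously established lemma), so no gap remains.
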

\begin{proof}
This theorem will be proved again using now the Theorem \ref{LiuZAnalyticity},  so the two conditions \eqref{EixoImaginary} and \eqref{Analiticity} must be proved.

Here the condition's test  \eqref{EixoImaginary} will be omitted because it is completely similar to the test already conducted for the first  the first system.
 
Next, the condition \eqref{Analiticity} is proved,  note that proving this condition is equivalent to show,  let $\delta>0$. There exists a constant $C_\delta > 0$ such that the solutions of \eqref{Eq01B2}--\eqref{Eq03B2} and \eqref{Eq08A}--\eqref{Eq09A}
for $|\lambda|\geq  \delta$  satisfy the inequality
 \begin{equation}\label{EquivAnaliticityB}
 |\lambda|\|U\|^2_\mathcal{H}\leq C_\delta\|F\|_\mathcal{H}\|U\|_\mathcal{H}.
 \end{equation}
 It is not difficult to see that this inequality \eqref{EquivAnaliticity} follows from the inequalities of the Lemmas \ref{RegularidadB} and \ref{Lema001AnaliticityB},    so the proof of this theorem is finished.
\end{proof}
\begin{remark}[Asymptotic Behavior]\label{OBS}
It is emphasized that thanks to the  Lemmas \ref{LemaExponencial} and \ref{EixoImaginary01}  the exponential decay of the semigroup  $S_1(t)=e^{\mathcal{B}_1t}$ is obtained when the 3 parameters take values in the closed interval $[0,1]$, also for the second system using the  Lemmas \ref{LemaExponencial2}  and proving in a completely similar way to the proof of Lemma \ref{EixoImaginary01}  which  $i\mathbb{R}\subset\rho(\mathcal{B}_2)$ the corresponding semigroup $S_2(t)=e^{\mathcal{B}_2t}$,  will be exponentially stable when the 3 parameters take values in the closed interval $[0, 1 ] $.
\end{remark}


\end{document}